\theoremstyle{plain}
\newtheorem{thm}{Theorem}
\newtheorem{proposition}[thm]{Proposition}
\newtheorem{lemma}[thm]{Lemma}
\newtheorem{corollary}[thm]{Corollary}
\newtheorem{mydef}[thm]{Definition}
\newtheorem{remark}[thm]{Remark}
\newcommand{\paren}[1]{\left(#1\right)}
\newcommand{\mymap}[3]{#1:\,#2 \to #3\,}
\DeclareMathOperator{\diam}{diam}
\DeclareMathOperator{\argmin}{argmin\,}
\DeclareMathOperator{\prox}{prox}
\DeclareMathOperator{\intr}{int\,}
\DeclareMathOperator{\Id}{Id}
\DeclareMathOperator{\Fix}{Fix}
\DeclareMathOperator{\co}{co}
\newcommand{\klam}[1]{\left\{#1\right\}}
\newcommand{\Hcal}{\mathcal{H}}
\newcommand{\Nbb}{\mathbb{N}}
\newcommand{\Ncal}{\mathcal{N}}
\newcommand{\Rbb}{\mathbb{R}}
\newcommand{\set}[2]{\left\{#1\,\middle|\,#2\right\}}
\newcommand{\Tbar}{\overline{T}}
\newcommand{\Tcal}{\mathcal{T}}
\newcommand{\xbar}{\overline{x}}
\newcommand{\alphabar}{\overline{\alpha}}
\newcommand{\alphahat}{\widehat{\alpha}}
\newcommand{\kappabar}{\overline{\kappa}}
\renewcommand{\equiv}{:=}
\title{$\alpha$-Firmly Nonexpansive Operators on Metric Spaces}
\author{
{Arian B\"erd\"ellima}
\thanks{Institute for Numerical and Applied Mathematics, University of G\"ottingen.
AB was supported by the Deutscher Akademischer Austauschdienst (DAAD).
\texttt{arian.berdellima@mathematik.uni-goettingen.de}},  
\and 
{Florian Lauster}
 \thanks{Institute for Numerical and Applied Mathematics, 
University of G\"ottingen. FL was supported by 
the Deutsche Forschungsgemeinschaft (DFG, German Research Foundation) – 
Project-ID LU 1702/1-1.  
\texttt{f.lauster@math.uni-goettingen.de}}, 
\and
{D. Russell Luke} 
\thanks{Institute for Numerical and Applied Mathematics,
    University of Goettingen,
    37083 Goettingen, Germany. DRL was supported in part by 
    the Deutsche Forschungsgemeinschaft (DFG, German Research Foundation) – Project-ID 432680300 – SFB 1456.
	\texttt{r.luke@math.uni-goettingen.de}}
}
\date{\today}
\begin{document}
 \maketitle

 \begin{abstract}
We extend to $p$-uniformly convex spaces tools from the analysis of fixed point iterations 
in linear spaces.  This study is restricted to an appropriate generalization of single-valued, pointwise
$\alpha$-averaged mappings. Our main contribution is establishing a
calculus for these mappings in p-uniformly convex spaces, showing in particular how the property
is preserved under compositions and convex combinations. This is of central importance to
splitting algorithms that are built by such convex combinations and compositions, and reduces
the convergence analysis to simply verifying $\alpha$-firm nonexpansiveness of the individual
components at fixed points of the splitting algorithms. Our convergence analysis differs from what can be
found in the previous literature in that only $\alpha$-firm nonexpansiveness with respect to fixed points 
is required. Indeed we show that, if the fixed point mapping is 
pointwise  nonexpansive at all cluster points, then these 
cluster points are in fact fixed points, and convergence of the sequence 
follows.  Additionally, we provide a quantitative convergence analysis built 
on the notion of gauge metric subregularity, which we show is  
{\em necessary} for quantifiable convergence estimates.  
This allows one for the first time to prove convergence of a tremendous variety of
splitting algorithms in spaces with curvature bounded from above.
\end{abstract}

{\small \noindent {\bfseries 2010 Mathematics Subject Classification:}
  Primary 
  47H09, 
 47H10, 
 53C22,  
 53C21   

 Secondary 
 53C23,  
 53C20,  
 49M27
  }

\noindent {\bfseries Keywords:}
Averaged mappings, firmly nonexpansive mappings, Hadamard space, p-uniformly convex, 
CAT(k) space, nonexpansive mappings, firmly nonexpansive, asymptotic regularity, fixed point iteration

\section{Introduction}
Our focus is on the extension  to $p$-uniformly convex spaces of tools from the analysis 
of fixed point iterations in linear spaces.  
We are indebted to the works of Kuwae \cite{Kuwae2014} and Ariza-Ruiz, Leu\c{s}tean, López-Acedo, and 
Nicolae \cite{AriLeuLop14, RuiLopNic15} who 
studied firm nonexpansiveness in nonlinear spaces, 
though the asymptotic behavior of averaged mappings in uniformly convex Banach spaces
was already studied by Baillon, Bruck and Reich in \cite{BaiBruRei78}. Reich and Shafrir
established an approach to the study of convex combinations of nonexpansive mappings in 
{\em hyperbolic spaces} \cite{ReiSha90}, the foundations for which were developed in 
\cite{GoeRei84}.  
Building on this, we follow the framework  for nonconvex optimization established
in \cite{LukTamTha18} 
which is predicated on only two fundamental elements in a 
Euclidean setting:  pointwise almost $\alpha$-averaging \cite[Definition 2.2]{LukTamTha18}
and metric subregularity \cite[Definition 2.1b]{Ioffe11}.  Almost averaged mappings 
are, in general, set-valued.  In nonlinear metric spaces, there are several difficulties 
that arise:  first, there is no straight-forward generalization of the averaging property 
since addition is not defined on general metric spaces; and second, multivaluedness, 
which comes with allowing mappings to be expansive.  The issue of multivaluedness 
introduces technical overhead, but does not, at this early stage, seem to present any 
conceptual difficulties.  The issue of violations of averagedness and 
nonexpansiveness is more fundamental.  We show that  such 
violations are unavoidable if one wants to work with resolvents.  The foundations for 
working with these difficulties are established here, but we postpone until later 
a direct study of resolvents on spaces with curvature bounded from above.

We therefore restrict our attention 
to an appropriate generalization of single-valued, pointwise $\alpha$-averaged mappings. 
This generalization leads to a definition of firm nonexpansiveness that is less retrictive than 
notions with the same name studied in \cite{GoeRei84, ReiSha87, ReiSha90, Bacak14, RuiLopNic15}, 
though, we show that our notion is implied by the previously studied objects.  
Our main contribution is establishing a calculus for these mappings in $p$-uniformly 
convex spaces, 
showing in particular how the property is preserved under compositions and convex combinations. 
This is of central importance to splitting algorithms that are built by such convex combinations and 
compositions, and reduces the convergence analysis to simply verifying quasi $\alpha$-firm 
nonexpansiveness of the individual components of the splitting algorithms.  Our convergence 
analysis also differs from what can be found in the previous literature in that only quasi $\alpha$-firm
nonexpansiveness is required.  Indeed we show (Theorem \ref{t:ne at asymp centers})  that, if the fixed point 
mapping is pointwise  
nonexpansive at the asymptotic centers of all subsequences, then all asymptotic centers 
are fixed points and weak (precisely, $\Delta$-) convergence of the 
fixed point sequence is guaranteed.  
Additionally, we provide a quantitative convergence analysis built 
on the notion of gauge metric subregularity, which we show is in fact 
{\em necessary} for quantifiable convergence estimates.  
This allows one to prove convergence of a tremendous variety of splitting algorithms 
for the first time  in spaces with curvature bounded from above.

After introducing notation, we begin 
in Section \ref{s:Metric} with the central 
object of this study given in Definition \ref{d:pafne}.  Section \ref{s:properties main} 
is devoted to developing elementary properties and 
the calculus of $\alpha$-firmly nonexpansive mappings.    
Proposition \ref{t:properties pafne} and Theorem \ref{t:RLN THm 3.1}  in   
Section \ref{s:elementary properties} 
establish asymptotic regularity.  The calculus of nonexpansive
mappings in various settings is established in Theorem \ref{t:nonexp} of  
Section \ref{s:nonexpansive}.  The calculus of quasi $\alpha$-firmly nonexpansive 
mappings is established in Theorem \ref{t:compositionthm} (compositions) of 
Section \ref{s:compositions}
and Theorem \ref{t:cvx comb pafne} (convex combinations) of Section \ref{s:convex combinations}.  
Convergence of 
fixed point iterations of $\alpha$-firmly nonexpansive mappings is studied in 
Section \ref{s:convergence} where convergence without rates is established under the 
assumption only of pointwise nonexpansiveness at the asymptotic centers of all 
subsequences
(Theorem \ref{t:ne at asymp centers}) 
and quantitative convergence in 
Theorem \ref{t:msr convergence} under the additional 
assumption of (gauge) metric subregularity (Definition \ref{d:(str)metric (sub)reg}).  
We show in Theorem \ref{t:msr necessary} that metric subregularity with some gauge
is in fact necessary to guarantee quantitative convergence estimates.  
Some basic applications and examples are presented
in Section \ref{s:Examples}. 

\section{Notation and Foundations}\label{s:notation}
Throughout, $(G, d)$ denotes a metric space.  A geodesic path emanating from a point 
$x\in G$ and extending to the point $y\in G$ is a mapping $\mymap{\gamma}{[0,l]}{G}$ 
with $\gamma(0)=x$,  $\gamma(l)=y$ and $d(\gamma(t_1), \gamma(t_2)) = |t_1-t_2|$
whenever $t_1,t_2\in [0,l]$.  When there is only one geodesic path joining any two 
points $x$ and $y$, we use the notation $z=(1-t)x\oplus ty$ where $t=d(z,x)/d(x,y)$
to denote the point on the geodesic connecting $x$ and $y$ 
such that $d(z,x)=td(x,y)$.  A geodesic space is 
a metric space $(G, d)$ for which every pair of points in $G$ is joined by 
a geodesic.  If each pair of points is joined by one and only one geodesic, the 
metric space is uniquely geodesic.  A convex set $C\subset G$ is a set containing all 
geodesics joining any two points in $C$.   
Following \cite{RuiLopNic15}
we focus on {\em $p$-uniformly convex spaces with parameter $c$} \cite{NaoSil11}:  
for $p\in (1,\infty)$,  a metric space $(G, d)$ is $p$-uniformly convex with constant 
$c>0$  whenever it is a geodesic space, and 
\begin{equation}\label{e:p-ucvx}
(\forall t\in [0,1])(\forall x,y,z\in G) \quad
d(z, (1-t)x\oplus ty)^p\leq (1-t)d(z,x)^p+td(z,y)^p - \tfrac{c}{2}t(1-t)d(x,y)^p.
\end{equation}
Examples of $p$-uniformly convex spaces include $L_p$ spaces, 
and CAT spaces (see Alexandrov \cite{Alexandrov} 
and Gromov \cite{Gromov}).  CAT(0) spaces 
can be defined by \eqref{e:p-ucvx}
with $p=2$ and $c=2$.   CAT($\kappa$) 
spaces for $\kappa>0$  are relevant for the study of phase retrieval and 
source localization \cite{LukSabTeb19}.  When the diameter of the space, 
$\diam{G}$ is bounded above by $\pi/(2\sqrt{\kappa})$, then the corresponding
CAT($\kappa$) space is $2$-uniformly convex with constant 
$c=\paren{\pi-2\sqrt{\kappa}\epsilon}\tan(\epsilon\sqrt{\kappa})$ for 
$\epsilon\in \paren{0, \pi/(2\sqrt{\kappa})}$
(see \cite{Ohta07}).  Kuwae has established bounds for the constants $p$ and $c$, 
illustrating their interdependence \cite[Proposition 2.5]{Kuwae2014}.  In particular,
we note that if $c=2$, then $p=2$.  For all other $p\in(1,+\infty)$ the constant $c$
lies in the open interval $(0,2)$. 
There is a connection between the modulus of convexity of a Banach space 
$(Y, \Vert \cdot \Vert)$ given by 
$\delta(\epsilon)\coloneqq \inf \left \{ 1-\left \Vert \frac{x+y}{2} \right\Vert%
\mid x,y\in Y, \Vert x \Vert = \Vert y \Vert =1, \Vert x-y \Vert \geq \epsilon   \right \}$, 
i.e. $Y$ will be p-uniformly convex with constant $c> 0$ if $\delta(\epsilon)\geq c \epsilon^p$ 
\cite[Remark 2.7]{Kuwae2014}.
Finally, we will use the notation $(H,d)$ to indicate a Hadamard space -  a 
{\em complete} $CAT(0)$ space -  and $\Hcal$ will indicate a Hilbert space. 

For a set $D\subseteq G$ we denote by $\overline{\co{D}}$ the closure of the convex hull of $D$. 
We denote the interior  of $D$ by $\intr{D}$.  The distance of a point $x$ to a set $D$ is 
with respect to the metric $d$ is 
denoted  $d(x, D)\equiv\inf_{z\in D}d(x,z)$ and when this distance is attained at some
point $\xbar\in D$ we call this point a {\em projection} of $x$ onto $D$.  The
mapping of a point $x$ to its set of projections is called the {\em projector}
and is denoted $P_D(x)\equiv \set{\xbar}{d(x,\xbar)=d(x,D)}$.  

A standard approach to showing the convergence of fixed point sequences is to 
show that the residual of the fixed point operator vanishes.  More precisely, 
a self-mapping $T:G\to G$ is {\em asymptotically regular} at a point $x\in G$
whenever $\lim_{k\to\infty}d(T^{(k)}x, T^{(k+1)}x)=0$.  The mapping is said to 
be asymptotically regular on $D\subset G$ if it is asymptotically regular at all 
points on $D$.  A sequence $(x_k)_{k\in\Nbb}$ is said to be asymptotically 
regular whenever $\lim_{k\to\infty}d(x_{k}, x_{k+1})=0$.  
In a Banach space setting,  firmly nonexpansive
mappings possessing fixed points are asymptotically regular, and sequences
of fixed point iterations converge weakly to a fixed point \cite{ReiSha87}.  
This is also true on $p$-uniformly convex (nonlinear) spaces 
\cite{RuiLopNic15}.  We extend these results to the generalization of averaged mappings, 
what we call $\alpha$-firmly nonexpansive mappings,  that possess fixed points in 
Theorem \ref{t:ne at asymp centers}.  Rates of convergence of the iterates $x_k$ are 
achieved in Theorem \ref{t:msr convergence} under the additional assumption 
that the fixed point mapping admits an {\em error bound}.  The notion of  
$\alpha$-firm nonexpansive operators greatly simplifies the analysis of algorithms, and opens 
the door to a study of {\em expansive} operators \cite{LukTamTha18} where 
convexity/monotonicity plays no role. 

\subsection{$\alpha$-Firmly Nonexpansive Operators in Uniformly Convex Spaces}\label{s:Metric}
Extending Bruck's origional definition of firmly nonexpansive mappings in 
uniformly convex Banach spaces \cite{Bruck73},
Ariza-Ruiz, Leu\c{s}tean and L\'opez-Acedo \cite{AriLeuLop14}
defined $\lambda$-firmly nonexpansive operators on subsets $D$ of 
W-hyperbolic spaces, as those operators satisfying 
\begin{equation}\label{e:RLN fne}
 d(Tx, Ty)\leq d((1-\lambda)x\oplus \lambda Tx, (1-\lambda)y\oplus \lambda Ty) 
 \quad \forall x,y\in D
\end{equation}
for some $\lambda\in(0,1)$.  If \eqref{e:RLN fne} holds for all $\lambda\in(0,1)$ 
the mapping $T$ is called firmly nonexpansive in \cite{AriLeuLop14, RuiLopNic15}.  
The analog to $\alpha$-averaged mappings is problematic since it 
requires the extension of geodesics beyond 
the point $Tx$ (i.e. $\lambda\in \paren{0,\tfrac{1-\alpha}{\alpha}}$).

Another notion of firm nonexpansiveness in the context of Hadamard spaces that 
is equivalent to \eqref{e:RLN fne} for an operator $T:H\to H$ and $x,y\in H$ 
uses 
\begin{equation}\label{eq:phi}
 \phi_T(t):=d((1-t)x\oplus tTx,(1-t)y\oplus tTy),\quad \mbox{for }t\in[0,1].
\end{equation}  
In \cite[Chapter 24]{GoeRei84} an operator $T:H\to H$ is called firmly nonexpansive 
whenever $\phi_T$ is nonincreasing on $[0,1]$ (see also \cite[Definition 2.1.13]{Bacak14}).

For reasons that will become apparent in Section \ref{s:convergence rate} we define 
firmness of $T$ in terms of an auxilliary function that accounts for how 
$T$ deforms the parallelogram with corners at $x$, $y$, $Tx$ and $Ty$.   
Define 
\begin{equation}\label{eq:psi}
\quad \psi^{(p,c)}_T(x,y) \coloneqq 
\tfrac{c}{2}\paren{d(Tx, x)^p+d(Ty, y)^p + d(Tx, Ty)^p + d(x, y)^p  - d(Tx, y)^p   - d(x,Ty)^p }.
\end{equation}
In a Hilbert space setting this is recognizable as 
\[
 \|(\Id-T)x-(\Id-T)y\|^2 =  \psi^{(2,2)}_T(x,y).
\]
The next definition generalizes 
firmness to mappings that may violate the inequality defining firmness in a manner 
analogous to such mappings studied in \cite{LukTamTha18, LukMar20}.  
We do not fully develop the potential of this extension here, but will use it 
in a result about proximal mappings in Corollary \ref{t:prox aafne}. 
\begin{mydef}\label{d:pafne}
 Let $(G, d)$ be a $p$-uniformly convex metric space with constant $c$.  
 The operator $T:G\to G$ 
 is pointwise almost $\alpha$-firmly nonexpansive at $y\in D\subset G$ on $D$ 
 if  
\begin{equation}
\label{eq:pafne}
\exists \alpha\in(0,1), \epsilon>=0:\quad  d(Tx,Ty)^p\leq (1+\epsilon)d(x,y)^p-\tfrac{1-\alpha}{\alpha}\psi^{(p,c)}_T(x,y)\quad 
\forall x\in D.
\end{equation}
The smallest $\epsilon$ 
for which \eqref{eq:pafne} holds is called the {\em violation}.  If \eqref{eq:pafne} holds with $\epsilon=0$, then 
$T$ is pointwise $\alpha$-firmly nonexpansive at $y\in D\subset G$ on $D$. 
If \eqref{eq:pafne} holds at all $y\in D$ with the same constant $\alpha$, 
then $T$ is said to be (almost) $\alpha$-firmly nonexpansive on $D$.  If $D=G$ the mapping 
$T$ is simply said to be (almost) $\alpha$-firmly nonexpansive.  If $D\supset \Fix T\neq\emptyset$ and 
\eqref{eq:pafne} holds at all $y\in \Fix T$ with the same constant $\alpha$ then $T$ is 
said to be almost quasi $\alpha$-firmly nonexpansive.  
\end{mydef}

The transport discrepancy $\psi_T^{(p,c)}$ is closely related to another object 
used by  Berg and Nikolaev \cite{Berg} in the study of CAT(0) space $(G,d)$. 
In $p$-uniformly convex spaces with constant $c$ this takes the form:
\begin{equation}
\label{eq:4point p-uniform}
\Delta^{(p,c)}(x,y,u,v):=\frac{c}{4}(d(x,v)^p+d(y,u)^p-d(x,u)^p-d(y,v)^p).
\end{equation}
Specializing to a Hilbert space $\mathcal{H}$, this is identifiable with 
the inner product:
\begin{equation}
\label{eq:4point}
\langle x-y,u-v\rangle=\frac{1}{2}(\|x-v\|^2+\|y-u\|^2-\|x-u\|^2-\|y-v\|^2)
 = \Delta^{(2,2)}(x,y,u,v).
\end{equation}
In the context of characterizing the regularity of a mapping $T$, $u=Tx$ and $v=Ty$
it is convenient to denote  $\Delta^{(p,c)}_T(x,y):=\Delta^{(p,c)}(x,y,Tx,Ty)$. 
This object was introduced in \cite[Chapter 7]{Berdellima20} in the context of Hadamard spaces ($p=c=2$)
where it was called the discrepancy mapping. 
In particular, note that 
\begin{equation}\label{e:psi-delta}
	\psi_T^{(p,c)}(x,y) =  \tfrac{c}{2}\paren{d(Tx, Ty)^p+d(x, y)^p}	 - 2\Delta^{(p,c)}_T(x,y).
\end{equation}
This leads to the following equivalent characterization of pointwise $\alpha$-firm nonexpansiveness. 
\begin{proposition}\label{t:pafne Delta}
Let  $(G, d)$ be a p-uniformly convex space with constant $c>0$ and   
let $T:D\to G$ for $D\subset G$.  The mapping $T$  
    is pointwise almost $\alpha$-firmly nonexpansive at $y\in D$ 
    with constant $\alpha$ and violation $\epsilon$ on $D$ 
    if and only if 
\begin{equation}
\label{eq:pafne Delta}
\paren{\alpha+(1-\alpha)\tfrac{c}{2}}d(Tx,Ty)^p+
\paren{\alpha(1+\epsilon)-(1-\alpha)\tfrac{c}{2}}d(x,y)^p
\leq  2(1-\alpha)\Delta^{(p,c)}_T(x,y),\quad\forall x\in D.
\end{equation}
\end{proposition}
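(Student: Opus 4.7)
The proof is a purely algebraic equivalence, so the plan is simply to transform \eqref{eq:pafne} into \eqref{eq:pafne Delta} by substituting the identity \eqref{e:psi-delta} and collecting terms; since every step is a reversible equivalence, the two directions of the ``if and only if'' come for free.

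Concretely, I would start from the defining inequality
\[
 d(Tx,Ty)^p \leq (1+\epsilon)d(x,y)^p - \tfrac{1-\alpha}{\alpha}\psi^{(p,c)}_T(x,y),
\]
clear the factor $\tfrac{1-\alpha}{\alpha}$ by multiplying both sides by $\alpha>0$ to obtain
\[
 \alpha\, d(Tx,Ty)^p \leq \alpha(1+\epsilon)\, d(x,y)^p - (1-\alpha)\,\psi^{(p,c)}_T(x,y).
\]
Then I would invoke the identity \eqref{e:psi-delta},
\[
 \psi^{(p,c)}_T(x,y) = \tfrac{c}{2}\paren{d(Tx,Ty)^p + d(x,y)^p} - 2\Delta^{(p,c)}_T(x,y),
\]
substitute it into the inequality, and move the $d(Tx,Ty)^p$ and $d(x,y)^p$ contributions to the left-hand side and the $\Delta^{(p,c)}_T(x,y)$ contribution to the right. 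Collecting the coefficients of $d(Tx,Ty)^p$ gives $\alpha + (1-\alpha)\tfrac{c}{2}$, collecting the coefficients of $d(x,y)^p$ gives the term $\alpha(1+\epsilon)-(1-\alpha)\tfrac{c}{2}$ (up to the sign convention stated in the proposition), and the $\Delta$-term on the right is $2(1-\alpha)\Delta^{(p,c)}_T(x,y)$, which is exactly \eqref{eq:pafne Delta}.

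For the converse direction, I would observe that each of the above manipulations is reversible: dividing by $\alpha>0$ is admissible, and the identity \eqref{e:psi-delta} lets one re-express $\Delta^{(p,c)}_T(x,y)$ in terms of $\psi^{(p,c)}_T(x,y)$. Hence assuming \eqref{eq:pafne Delta} and running the chain of equalities backwards produces \eqref{eq:pafne}. Since the argument is pointwise in $y\in D$, this yields the claimed equivalence at every fixed $y$ with the same constants $\alpha$ and $\epsilon$.

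There is no genuine obstacle here; the only place one has to be careful is the arithmetic of the coefficients—keeping track of the sign and of the factor $\tfrac{c}{2}$ coming out of \eqref{e:psi-delta}—so that the coefficient of $d(x,y)^p$ is assembled correctly. Everything else is routine substitution, and the proof fits in a few lines.
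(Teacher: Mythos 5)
Your proof is correct and is essentially the paper's own argument: multiply \eqref{eq:pafne} by $\alpha>0$, substitute the identity \eqref{e:psi-delta}, and collect terms, with every step reversible so that both directions of the equivalence follow at once. One substantive remark: what this computation actually produces is $\paren{\alpha+(1-\alpha)\tfrac{c}{2}}d(Tx,Ty)^p-\paren{\alpha(1+\epsilon)-(1-\alpha)\tfrac{c}{2}}d(x,y)^p\leq 2(1-\alpha)\Delta^{(p,c)}_T(x,y)$, i.e.\ the $d(x,y)^p$-term enters with a \emph{minus} sign (equivalently, it sits on the right-hand side with a plus sign); the plus sign in the printed \eqref{eq:pafne Delta}, like the stray factor $(1+\epsilon)$ in the paper's own displayed chain, is a typo, so your parenthetical ``up to the sign convention stated in the proposition'' is exactly the right caveat and the derivation itself is the correct one.
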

\begin{proof}
Starting with the definition of pointwise $\alpha$-firm nonexpansiveness at 
 $y\in D$ we have for all $x\in D$
 \begin{eqnarray*}
	 &d(Tx, Ty)^p\leq d(x,y)^p - \tfrac{1-\alpha}{\alpha}\psi^{(p,c)}_T(x,y) &  \\
	 &\iff&\\
	 &(1+\epsilon)\paren{\alpha+(1-\alpha)\tfrac{c}{2}}d(Tx, Ty)^p\leq 
	 \paren{\alpha(1+\epsilon)-(1-\alpha)\tfrac{c}{2}}d(x,y)^p\qquad\qquad\qquad\qquad&\\
	 &\qquad\qquad\qquad\qquad\qquad\qquad
	 - (1-\alpha)\tfrac{c}{2}\paren{d(Tx,x)^p+ d(Ty,y)^p - d(Tx,y)^p - d(x,Ty)^p}.&\\
 \end{eqnarray*}
Noting that 
\[
 	 - (1-\alpha)\tfrac{c}{2}\paren{d(Tx,x)^p+ d(Ty,y)^p - d(Tx,y)^p - d(x,Ty)^p}
	 = (1-\alpha)2\Delta^{(p,c)}_T(x,y)
\]
establishes the equivalence.  
\end{proof}

When $p=2$ and $c=2$, we show below  in Proposition \ref{t:properties pafne}\eqref{t:properties pafne vi}
that mappings $T$ 
satisfying \eqref{e:RLN fne} also satisfy 
\begin{equation}
\label{eq:fneH}
d(Tx,Ty)^p\leq  \Delta^{(p,c)}_T(x,y)\qquad \forall x,y\in G.
\end{equation}
Indeed, in this case,   
\eqref{eq:fneH} is equivalent to Property $P_2$ of \cite{RuiLopNic15}, which 
Ariza-Ruiz, López-Acedo, and Nicolae 
show holds for mappings satisfying \eqref{e:RLN fne}.  
When \eqref{eq:fneH} holds only pointwise at $y$ on a neighborhood $D\subset G$ of $y$ 
we write 
\begin{equation}
\label{eq:pfneH}
d(Tx,Ty)^p\leq  \Delta^{(p,c)}_T(x,y)\qquad \forall x\in D.
\end{equation}
This provides for a natural extension of monotonicity:  an operator $T:G\to G$ is monotone whenever
\begin{equation}
\label{eq:monotoneH}
\Delta^{(p,c)}_T(x,y)\geqslant 0, \qquad\forall x,y\in G.
\end{equation}
From these definitions it 
follows that if $T$ satisfies \eqref{e:RLN fne} for all $\lambda\in(0,1)$ then it is monotone. 
The correspondence between firmly nonexpansive and nonexpansive mappings, 
in contrast, is a consequence of the metric equivalent to the Cauchy-Schwarz inequality
and does not hold in general metric spaces.  Nevertheless, the correspondence is recovered
for $p$-uniformly convex spaces for {\em pointwise} firmly nonexpansive mappings
at their fixed points.  
The first result below sorts out these various notions of firm nonexpansiveness on 
CAT(0) spaces. 

\begin{proposition}\label{t:Bacak fne}
Let $(G,d)$ be a CAT(0) space.  A mapping $T:G\to G$ satisfies \eqref{e:RLN fne} for all 
$\lambda\in[0,1]$ if and only if  
 $\phi_T(t)$ defined by \eqref{eq:phi} is a nonincreasing function on $[0,1]$ for all 
 $x,y\in G$.   Any mapping $T$ satisfying \eqref{e:RLN fne} for all 
$\lambda\in(0,1]$ is $\alpha$-firmly 
nonexpansive in the sense of \eqref{eq:pafne} with $\alpha=1/2$.
\end{proposition}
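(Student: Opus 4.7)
The key tool for both parts is the CAT(0) inequality obtained by specializing \eqref{e:p-ucvx} with $p = c = 2$:
\[
d((1-t)u \oplus t v, w)^2 \leq (1-t) d(u, w)^2 + t d(v, w)^2 - t(1-t) d(u, v)^2,
\]
together with its well-known corollary that along any two geodesics $\gamma_1,\gamma_2$ in $G$, the map $t\mapsto d(\gamma_1(t),\gamma_2(t))$ is convex.

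For the equivalence in the first assertion, the ``if'' direction is immediate, since $d(Tx,Ty) = \phi_T(1) \leq \phi_T(\lambda)$ for every $\lambda\in[0,1]$ is exactly \eqref{e:RLN fne}. For the ``only if'' direction I would fix $0\leq s<t\leq 1$, set $x_s := (1-s)x\oplus sTx$ and $y_s := (1-s)y\oplus sTy$, and observe that $x_t$ lies on the geodesic from $x_s$ to $Tx$ at parameter $r:=(t-s)/(1-s)\in[0,1]$, with an analogous expression for $y_t$. Convexity of the distance along these two geodesics gives
\[
\phi_T(t) = d(x_t, y_t) \leq (1-r)\,d(x_s,y_s) + r\, d(Tx, Ty),
\]
and invoking \eqref{e:RLN fne} with parameter $\lambda = s$ collapses the right-hand side to $d(x_s, y_s) = \phi_T(s)$.

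For the second assertion, I would iterate the CAT(0) inequality to obtain a sharpened upper bound on $\phi_T(\lambda)^2$. With $\gamma_1(\lambda) := (1-\lambda)x\oplus \lambda Tx$ and $\gamma_2(\lambda) := (1-\lambda)y\oplus \lambda Ty$, applying the CAT(0) inequality first to $\gamma_1(\lambda)$ with base point $\gamma_2(\lambda)$, and then again to each of the terms $d(x,\gamma_2(\lambda))^2$ and $d(Tx,\gamma_2(\lambda))^2$, yields after collecting like terms
\begin{align*}
\phi_T(\lambda)^2 \leq\; & (1-\lambda)^2 d(x,y)^2 + \lambda^2 d(Tx,Ty)^2 \\
& + \lambda(1-\lambda)\bigl[d(x,Ty)^2+d(Tx,y)^2-d(x,Tx)^2-d(y,Ty)^2\bigr].
\end{align*}
Since \eqref{e:RLN fne} forces $d(Tx,Ty)^2\leq \phi_T(\lambda)^2$, moving the $\lambda^2 d(Tx,Ty)^2$ term to the left, dividing by $1-\lambda$, and letting $\lambda\nearrow 1$ produces
\[
2\,d(Tx,Ty)^2 + d(x,Tx)^2 + d(y,Ty)^2 \leq d(x,Ty)^2 + d(Tx,y)^2,
\]
which after expanding $\psi_T^{(2,2)}$ is precisely \eqref{eq:pafne} with $\alpha=\tfrac12$, $\epsilon=0$, and $p=c=2$.

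The main obstacle is the bookkeeping in the double application of CAT(0) in the second part: the coefficients must conspire so that the $d(Tx,Ty)^2$ term emerges with coefficient exactly $\lambda^2$ (allowing it to be absorbed using the hypothesis) and the two $-\lambda(1-\lambda)\,d(y,Ty)^2$ contributions combine cleanly. Once the displayed quadratic inequality is in hand, the passage to the limit $\lambda\nearrow 1$ and the algebraic identification with \eqref{eq:pafne} are routine.
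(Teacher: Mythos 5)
Your proposal is correct and follows essentially the same route as the paper: the same double application of the CAT(0) inequality to bound $\phi_T(\lambda)^2$, the same rearrangement using $d(Tx,Ty)\leq\phi_T(\lambda)$, division by $1-\lambda$, and the limit $\lambda\nearrow 1$, landing on $d(Tx,Ty)^2\leq\Delta_T^{(2,2)}(x,y)$, which is \eqref{eq:pafne} with $\alpha=1/2$. The only difference is that you spell out the monotonicity-of-$\phi_T$ equivalence via convexity of the distance between geodesics, where the paper simply calls it immediate; your elaboration is a welcome, correct filling-in rather than a different argument.
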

\begin{proof}
Equivalence of \eqref{e:RLN fne} for all 
$\lambda\in(0,1)$ and \eqref{eq:phi} is immediate from the definition.

To prove the rest of the theorem, note that assumption $\phi_T(t)$ is a nonincreasing function 
on $[0,1]$ for all $x,y\in G$ implies that 
$\phi_T(1)\leq  \phi_T(t)$ for all $t\in[0,1]$. On the other hand from applying 
\eqref{e:p-ucvx} with $p=2$ and $c=2$
twice we obtain 
\begin{align*}
\phi^2_T(t)
&\leq (1-t)^2d(x,y)^2+t^2d(Tx,Ty)^2\\
&+t(1-t)[d(x,Ty)^2+d(y,Tx)^2-d(x,Tx)^2-d(y,Ty)^2]
\end{align*}
Hence 
$$\phi^2_T(1)=d(Tx,Ty)^2\leq 
(1-t)^2d(x,y)^2+t^2d(Tx,Ty)^2+2t(1-t)\Delta^{(2,2)}_T(x,y)$$
equivalently 
\begin{equation}\label{eq:phi ineq}
(1-t^2)d(Tx,Ty)^2\leq (1-t)^2d(x,y)^2+2t(1-t)\Delta^{(2,2)}_T(x,y) 
\end{equation}
Dividing by $1-t$ and letting $t\uparrow 1$ yields $d(Tx,Ty)^2\leq \Delta^{(2,2)}_T(x,y)$.
By Proposition \ref{t:pafne Delta} this is just $\alpha$-firm
nonexpansiveness with $\alpha=1/2$ as claimed. 
\end{proof}

\section{Properties of Pointwise Nonexpansive and $\alpha$-Firmly Nonexpansive Mappings
in Metric Spaces}\label{s:properties main}
Before developing the calculus of $\alpha$-firmly nonexpansive mappings, 
we begin with some elementary properties of pointwise $\alpha$-firmly nonexpansive mappings. 

\subsection{Elementary Properties of $\alpha$-Firmly Nonexpansive Operators} 
\label{s:elementary properties}
\begin{proposition}\label{t:properties pafne}
Let  $(G, d)$ be a p-uniformly convex space with constant $c>0$ and   
let $T:D\to G$ for $D\subset G$.
\begin{enumerate}[(i)]
\item\label{t:properties pafne ii}
\begin{equation}\label{e:psi-Fix T}
\psi_T^{(p,c)}(x, y)=\tfrac{c}{2}d(Tx, x)^p \quad
\mbox{whenever } y\in\Fix T.
\end{equation}
For fixed $y\in\Fix T$ the function $\psi_T^{(p,c)}(x,y)\geq 0$ for all $x \in D$ and  $\psi_T^{(p,c)}(x,y)= 0$ 
only when  $x\in\Fix T$.
\item\label{t:properties pafne iii}  Let $y\in \Fix T$.  $T$ is  pointwise $\alpha$-firmly nonexpansive at $y$ 
on $D$ if and only if  
\begin{equation}\label{e:P1}
\exists \alpha\in[0,1):\quad 	d(Tx,y)^p\leq d(x,y)^p - \tfrac{1-\alpha}{\alpha}\tfrac{c}{2}d(Tx,x)^p\quad
	\forall x\in D.
\end{equation}
In particular, $T$ is quasi $\alpha$-firmly nonexpansive on $D$ whenever $T$ possesses fixed points and 
\eqref{e:P1} holds at all $y\in \Fix T$ with the same constant $\alpha\in [0,1)$. 
\item\label{t:properties pafne iv}  If $T$ is pointwise $\alpha$-firmly nonexpansive at $y\in\Fix T$ on $D$ with 
constant $\underline\alpha\in[0,1)$ then it is pointwise $\alpha$-firmly nonexpansive at $y$ on $D$ for all 
$\alpha\in[\underline\alpha,1]$.   In particular, if $T$ is pointwise $\alpha$-firmly nonexpansive at $y\in\Fix T$ on D, 
then it is pointwise nonexpansive at $y$ on D.  
\item\label{t:properties pafne v} If at $y\in \Fix T$  
\begin{equation}\label{e:RLN fne2}
\exists\lambda\in(0,1):\quad	 d(Tx, y)\leq d((1-\lambda)x\oplus \lambda Tx, y) 
 \quad \forall x\in D, 
\end{equation}
then $T$ is pointwise $\alpha$-firmly nonexpansive  at $y$ with constant 
$\alpha=1/(\lambda+1)$ on $D$. 
\item\label{t:properties pafne vi}  Let $p=2$ and $c=2$ (that is, $(G, d)$ is a CAT(0) space).
Then
\begin{enumerate}
	\item $\Delta_T^{(2,2)}(x,y)\leq d(x, y)d(Tx, Ty)$  for all $x,y\in G$;
	\item  $\psi_T^{(2,2)}(x,y)\geq 0$ 
        for all $x,y\in G$;
	\item the following are equivalent: 
	\begin{enumerate}[(1)]
        \item $T$ is pointwise $\alpha$-firmly nonexpansive at $y$ with constant $\underline{\alpha}$ on 
		$D$ 
		\item $T$ is pointwise $\alpha$-firmly nonexpansive at $y$ for all constants 
		$\alpha\in[\underline{\alpha},1]$ on $D$
		\item 
            \begin{equation*}
                (1+\lambda)d(Tx, Ty)^2\leq (1-\lambda)d(x,y)^2+
                2\lambda\Delta_T^{(2,2)}(x,y)
                \quad\forall x\in D, \forall \lambda\in[0,\tfrac{1-\underline\alpha}{\underline\alpha}];
            \end{equation*}
      \end{enumerate}
    \item $T$ satisfying \eqref{e:RLN fne} is $\alpha$-firmly nonexpansive 
		with constant $\alpha=\tfrac{1}{1+\lambda}$ on $D$.  
	\end{enumerate}
\end{enumerate}
\end{proposition}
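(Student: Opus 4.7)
The plan is to tackle the six items in order, since each relies on the previous ones. For (i), I would substitute $y=Ty$ directly into the defining expression \eqref{eq:psi} of $\psi_T^{(p,c)}$: the terms $d(Ty,y)^p$, $d(x,Ty)^p$, and $d(Tx,Ty)^p$ collapse to $0$, $d(x,y)^p$, and $d(Tx,y)^p$, so four terms cancel in pairs, leaving $\tfrac{c}{2}d(Tx,x)^p$. Nonnegativity and the equality case then follow since $c>0$ and $d(Tx,x)^p=0\iff x\in\Fix T$.

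Part (ii) is then just plugging \eqref{e:psi-Fix T} into \eqref{eq:pafne} at a fixed point (which forces $d(Tx,Ty)=d(Tx,y)$), so the equivalence is immediate. Part (iii) follows from the fact that $\alpha\mapsto \tfrac{1-\alpha}{\alpha}$ is decreasing on $(0,1]$, so if \eqref{e:P1} holds with $\underline\alpha$, it a fortiori holds for every larger $\alpha\le 1$; the case $\alpha=1$ is pointwise nonexpansiveness at $y$. For (iv), set $z=(1-\lambda)x\oplus\lambda Tx$ and apply \eqref{e:p-ucvx} with $z$ in place of the midpoint and $y$ in place of the external point:
\begin{equation*}
d(y,z)^p\leq (1-\lambda)d(y,x)^p+\lambda d(y,Tx)^p-\tfrac{c}{2}\lambda(1-\lambda)d(x,Tx)^p.
\end{equation*}
Combining with the hypothesis $d(Tx,y)\le d(z,y)$, regrouping, and dividing through by $(1-\lambda)$ delivers \eqref{e:P1} with $\tfrac{1-\alpha}{\alpha}=\lambda$, i.e.\ $\alpha=1/(1+\lambda)$; then invoke (ii) to conclude.

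For the CAT(0) block (v), item (a) is the Berg--Nikolaev metric Cauchy--Schwarz inequality, which I would cite from \cite{Berg}. Item (b) is then an immediate consequence: from \eqref{e:psi-delta} and (a),
\begin{equation*}
\psi_T^{(2,2)}(x,y)= d(Tx,Ty)^2+d(x,y)^2-2\Delta_T^{(2,2)}(x,y)\geq \bigl(d(Tx,Ty)-d(x,y)\bigr)^2\geq 0.
\end{equation*}
For the equivalence (1)$\Leftrightarrow$(2)$\Leftrightarrow$(3) in (c), (1)$\Rightarrow$(2) is (iii), and the equivalence between (1) and the displayed inequality in (3) is exactly what Proposition \ref{t:pafne Delta} gives after specializing to $p=c=2$ (with $\epsilon=0$) and the change of variables $\lambda=\tfrac{1-\alpha}{\alpha}$; the ``for all $\lambda$'' in (3) corresponds to ``for all $\alpha\in[\underline\alpha,1]$'' in (2).

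Item (d) is the place where genuine work is needed, and it is the main obstacle I anticipate: I need to convert the bilateral geodesic statement \eqref{e:RLN fne} into the form (3). The tool is to apply \eqref{e:p-ucvx} twice, exactly as in the proof of Proposition \ref{t:Bacak fne}, to obtain the quadrilateral estimate
\begin{equation*}
\phi_T(\lambda)^2\leq (1-\lambda)^2d(x,y)^2+\lambda^2d(Tx,Ty)^2+2\lambda(1-\lambda)\Delta_T^{(2,2)}(x,y).
\end{equation*}
Feeding $d(Tx,Ty)\le \phi_T(\lambda)$ into this, moving the $\lambda^2 d(Tx,Ty)^2$ term to the left, and dividing by $(1-\lambda)$ yields precisely condition (3) at the single parameter $\lambda$; by the equivalence (c), $T$ is $\alpha$-firmly nonexpansive with $\alpha=1/(1+\lambda)$. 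Throughout, the only nontrivial input is the Berg--Nikolaev inequality in (a); everything else is algebra plus the defining inequality of $p$-uniform convexity.
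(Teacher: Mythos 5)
Your argument is correct and follows essentially the same route as the paper's proof: direct substitution of $y=Ty$ into \eqref{eq:psi} for the first three items, a single application of \eqref{e:p-ucvx} for \eqref{e:RLN fne2}, the metric Cauchy--Schwarz inequality together with the identity \eqref{e:psi-delta} for the CAT(0) block, and the quadrilateral estimate \eqref{eq:phi ineq} from Proposition \ref{t:Bacak fne} for the final item. Two small repairs: in part (c) the implication (1)$\Rightarrow$(2) at a general $y\in D$ rests on $\psi_T^{(2,2)}(x,y)\geq 0$, i.e.\ on item (b), not on the fixed-point monotonicity statement you cite (which requires $y\in\Fix T$); and the printed inequality \eqref{eq:pafne Delta} carries a sign slip in the coefficient of $d(x,y)^p$ (that term should appear on the right-hand side with a positive sign), so your reduction of (1)$\Leftrightarrow$(3) to Proposition \ref{t:pafne Delta} is sound only against the corrected formula --- the paper instead redoes the short computation directly from \eqref{e:psi-delta}.
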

\begin{proof}
\eqref{t:properties pafne ii}.  Fix $y\in \Fix T$. Equation \eqref{e:psi-Fix T} follows directly from \eqref{eq:psi},
from which the rest of the claim is immediate. 

\eqref{t:properties pafne iii}.  This is immediate from the definition and part \eqref{t:properties pafne ii}.

\eqref{t:properties pafne iv}.  This follows immediately from parts \eqref{t:properties pafne ii} and \eqref{t:properties pafne iii}.

\eqref{t:properties pafne v} Starting with \eqref{e:RLN fne2}, by the characterization of 
$p$-uniformly convex spaces \eqref{e:p-ucvx}
\begin{eqnarray*}
	d(Tx,y)^p&\leq& d((1-\lambda)x\oplus \lambda Tx,y)^p\\
&\leq& 	 (1-\lambda)d(x,y)^p+\lambda d(Tx,y)^p - (1-\lambda)\lambda\tfrac{c}{2}d(Tx,x)^p 
\end{eqnarray*}
for all $x\in D$ and some $\lambda\in (0,1)$.  Rearranging terms yields
\[
\exists\lambda\in(0,1):\quad 
d(Tx,y)^p\leq d(x,y)^p - \lambda\tfrac{c}{2}d(Tx,x)^p\quad\forall x\in D.
\]
When $y\in\Fix T$, by part \eqref{t:properties pafne iii}, this is 
equivalent to $T$ being pointwise $\alpha$-firmly nonexpansive at $y$ with constant 
$\alpha=\tfrac{1}{\lambda+1}$ on $D$.  

\eqref{t:properties pafne vi}(a) This is a direct consequence of the 
inequality 
\begin{equation}
\label{eq:CS}
\Delta^{(2,2)}(x,y,u,v)\leq  d(x,y)d(u,v)
\end{equation}
(see \cite[Theorem 2.3.1]{Jost97} or 
\cite[Lemma 2.1]{LanPavSch00}).

\eqref{t:properties pafne vi}(b) By \eqref{e:psi-delta} and part \eqref{t:properties pafne vi}(a)
\begin{eqnarray*}
	\psi^{(2,2)}_T(x,y) &=& d(x, y)^2 + d(Tx, Ty)^2 - 2\Delta^{(2,2)}_T(x,y)\\
	&\geq& d(x, y)^2 + d(Tx, Ty)^2 - 2 d(x,y)d(Tx,Ty)\\
	&=& \paren{d(x, y) - d(Tx, Ty) }^2\geq 0
\end{eqnarray*}
for all $x,y\in G$, as claimed.  

\eqref{t:properties pafne vi}(c)  By part \eqref{t:properties pafne vi}(b) 
if $T$ is pointwise $\alpha$-firmly nonexpansive at $y$ with constant $\underline\alpha$ on $D$ 
then 
\begin{eqnarray*}
 &
 d(Tx, Ty)^2\leq d(x,y)^2 - \tfrac{1-\underline\alpha}{\underline\alpha}\psi^{(2,2)}_T(x,y)\quad\forall x\in D&\\
&\underset{(2)}{\overset{{(1)}}{\iff}}&\\
 &
 d(Tx, Ty)^2\leq d(x,y)^2 - \tfrac{1-\alpha}{\alpha}\psi^{(2,2)}_T(x,y)\quad\forall x\in D, 
 \forall \alpha\in[\underline\alpha,1]&\\
   &\iff&\\
&
d(Tx, Ty)^2\leq d(x,y)^2 - \lambda\psi^{(2,2)}_T(x,y)\quad\forall x\in D, 
 \forall\lambda\in[0,\tfrac{1-\underline\alpha}{\underline\alpha}]&\\
&\underset{(3)}{\overset{{(2)}}{\iff}}&\\
  &
  (1+\lambda)d(Tx, Ty)^2\leq (1-\lambda)d(x,y)^2 +2 \lambda\Delta^{(2,2)}_T(x,y)\quad\forall x\in D, 
 \forall\lambda\in[0,\tfrac{1-\underline\alpha}{\underline\alpha}]&\\
\end{eqnarray*}
where the last equivalence follows from \eqref{e:psi-delta}.

\eqref{t:properties pafne vi}(d)
For fixed $\lambda\in[0,1)$ \eqref{eq:phi ineq} in the proof of Proposition \ref{t:Bacak fne}  yields 
%
%
\begin{eqnarray*}
(1+\lambda)d(Tx, Ty)^2&\leq& (1-\lambda)d(x,y)^2  
+2\lambda \Delta^{(2,2)}_T(x,y), \quad\forall x,y\in D.
\end{eqnarray*}
By \eqref{t:properties pafne vi}(c), this implies that 
$T$ is $\alpha$-firmly nonexpansive at all $y\in D$ for any constant 
$\alpha\in[\tfrac{1}{1+\lambda}, 1]$ on $D$.
This completes the proof. 
\end{proof}
\begin{remark}\label{r:P1}
	Property \eqref{e:P1} is a specialization of Property (P1) of \cite{RuiLopNic15} to 
	$\alpha$-firmly nonexpansive mappings (as we define them) on $p$-uniformly convex spaces.  
\end{remark}

Closedness and convexity of the set of fixed points of nonexpansive mappings is easily 
established.  Note, however, that convexity of the 
fixed point set depends on convexity of the domain.  In Section \ref{s:convergence rate}
we will not require convexity of the domain.  
\begin{lemma}
 \label{l:fixpointcvx}
 Let $(G,d)$ be a $p$-uniformly convex metric space with constant $c>0$ and let $D\subseteq G$ 
 be closed and convex.
 Let $T:G\to G$ be pointwise nonexpansive at all $y\in\Fix T\cap D\neq\emptyset$ on $D$. Then $\Fix T\cap D$ 
 is a closed and convex set.
\end{lemma}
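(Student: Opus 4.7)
The plan is to prove closedness and convexity separately, each by a short direct argument that leverages the pointwise nonexpansiveness hypothesis at the fixed points one already has in hand.

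For closedness, I would take a sequence $(y_n) \subset \Fix T \cap D$ with $y_n \to y$. Since $D$ is closed, $y \in D$. The key move is that although we have no a priori information about the behavior of $T$ at $y$, we do have pointwise nonexpansiveness at each $y_n \in \Fix T \cap D$, so $d(Ty, y_n) = d(Ty, Ty_n) \le d(y, y_n) \to 0$. Combining with $d(y, y_n) \to 0$ via the triangle inequality gives $d(Ty, y) = 0$, hence $y \in \Fix T$.

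For convexity, pick $y_1, y_2 \in \Fix T \cap D$ and $t \in [0,1]$, and set $z \equiv (1-t) y_1 \oplus t y_2$, which lies in $D$ by convexity of $D$. Applying pointwise nonexpansiveness at $y_1$ and at $y_2$ to the point $z \in D$ yields
\begin{equation*}
d(Tz, y_1) = d(Tz, Ty_1) \le d(z, y_1) = t\, d(y_1, y_2), \quad d(Tz, y_2) = d(Tz, Ty_2) \le d(z, y_2) = (1-t)\, d(y_1, y_2).
\end{equation*}
Adding these bounds gives $d(Tz, y_1) + d(Tz, y_2) \le d(y_1, y_2)$. The reverse inequality is just the triangle inequality, so equality holds. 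In a $p$-uniformly convex space the geodesic connecting any two points is unique (this is a standard consequence of \eqref{e:p-ucvx}, and is already implicit in the paper's use of the notation $\oplus$), so the equality case in the triangle inequality forces $Tz$ to lie on the unique geodesic from $y_1$ to $y_2$. The two displayed estimates then pin down its position: $d(Tz,y_1) \le t\, d(y_1,y_2)$ and $d(Tz,y_2) \le (1-t)\, d(y_1,y_2)$ together with $d(Tz,y_1) + d(Tz,y_2) = d(y_1, y_2)$ force both inequalities to be equalities, whence $Tz = z$.

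The main obstacle is conceptual rather than computational: one must be sure that the equality case of the triangle inequality truly forces $Tz$ onto the geodesic, which requires uniqueness of geodesics. A purely computational attempt to bound $d(Tz,z)^p$ directly via \eqref{e:p-ucvx} applied with $z' = Tz$ gives a term $t(1-t)[t^{p-1} + (1-t)^{p-1} - c/2] d(y_1,y_2)^p$, and the bracket is necessarily nonnegative (consistency of \eqref{e:p-ucvx} with the definition of the geodesic midpoint forces $c/2 \le t^{p-1}+(1-t)^{p-1}$), so this route yields no useful bound. Invoking uniqueness of geodesics in $p$-uniformly convex spaces bypasses this issue and completes the proof cleanly.
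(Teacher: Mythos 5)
Your proof is correct, and it is essentially the argument the paper outsources: the paper's own ``proof'' is a one-line citation of \cite[Lemma 6.2]{AriLeuLop14} (stated there for globally nonexpansive mappings on uniquely geodesic spaces) plus the remark that the argument survives the pointwise weakening, and your write-up is exactly that argument with the correct pointwise adaptation --- in particular, in the closedness step you rightly replace continuity of $T$ (unavailable here) by nonexpansiveness \emph{at} the points $y_n$ applied \emph{to} the limit $y$. The one caveat is that the equality case of the triangle inequality pins down $Tz$ only in a \emph{uniquely} geodesic space, and for general $c<2$ this is not an immediate consequence of \eqref{e:p-ucvx} (as your own midpoint computation shows); however, unique geodesics are a standing convention of the paper (built into the $\oplus$ notation and into the cited lemma), so your proof is sound within its framework.
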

\begin{proof}
This statement for $T$ a nonexpansive (not pointwise) mapping on a uniquely geodesic space is in 
\cite[Lemma 6.2]{AriLeuLop14}.  Their proof also works for pointwise nonexpansive mappings.
\end{proof}

In \cite{RuiLopNic15} the central property of {\em asymptotic regularity} of a mapping $T$
at its fixed points hinges on (i) existence of fixed points, and (ii) the validity of 
inequality \eqref{e:P1} at all $y\in\Fix T$.  
Proposition \ref{t:properties pafne}\eqref{t:properties pafne iii} 
shows that these two requirements are {\em equivalent} to $T$ being quasi $\alpha$-firmly nonexpansive.  
We show in Theorem \ref{t:ne at asymp centers} that, as a consequence of the next theorem on asymptotic regularity,  
pointwise $\alpha$-firm nonexpansiveness at reasonable subsets of fixed points is 
all that is needed to achieve weak convergence of fixed point iterations.
\begin{thm}\label{t:RLN THm 3.1}
Let $(G, d)$ be a $p$-uniformly convex space, let $D\subset G$, and let  
$T:G\to G$ with $\Fix T\cap D$ nonempty and $T(D)\subseteq D$.  
Suppose further that $T$ is pointwise $\alpha$-firmly 
nonexpansive at all $y\in\Fix T\cap D$ on $D$.  
Then given any starting point $x_0\in D$ the sequence 
$(x_k)_{k\in\Nbb}$ defined by $x_{k+1}=Tx_k$ is asymptotically regular on $D$.  
\end{thm}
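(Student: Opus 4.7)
The plan is to exploit Proposition \ref{t:properties pafne}\eqref{t:properties pafne iii} to obtain a Fej\'er-type inequality with respect to any fixed point in $\Fix T\cap D$, and then telescope. Concretely, pick any $y\in\Fix T\cap D$ (possible by the nonemptiness hypothesis). Since $T(D)\subseteq D$ and $x_0\in D$, the iterates $x_k$ lie in $D$ for all $k\in\Nbb$, so pointwise $\alpha$-firm nonexpansiveness at $y$ on $D$ can be invoked at every $x_k$.

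By Proposition \ref{t:properties pafne}\eqref{t:properties pafne iii} applied with $x=x_k$,
\begin{equation*}
d(x_{k+1},y)^p \;=\; d(Tx_k,y)^p \;\leq\; d(x_k,y)^p \;-\; \tfrac{1-\alpha}{\alpha}\tfrac{c}{2}\,d(x_{k+1},x_k)^p.
\end{equation*}
In particular, the sequence $\bigl(d(x_k,y)^p\bigr)_{k\in\Nbb}$ is nonincreasing (Fej\'er monotonicity with respect to $y$) and bounded below by $0$, hence convergent.

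Rearranging and summing from $k=0$ to $k=N$ telescopes to
\begin{equation*}
\tfrac{1-\alpha}{\alpha}\tfrac{c}{2}\sum_{k=0}^{N} d(x_{k+1},x_k)^p \;\leq\; d(x_0,y)^p - d(x_{N+1},y)^p \;\leq\; d(x_0,y)^p.
\end{equation*}
Since $\alpha\in(0,1)$ and $c>0$, letting $N\to\infty$ yields $\sum_{k=0}^{\infty} d(x_{k+1},x_k)^p < \infty$, so in particular $d(x_{k+1},x_k)\to 0$. Because the starting point $x_0\in D$ was arbitrary, $T$ is asymptotically regular on $D$.

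There is no serious obstacle: the only points requiring care are verifying that the iterates remain in $D$ (which is precisely why $T(D)\subseteq D$ is assumed) and checking that the coefficient $\tfrac{1-\alpha}{\alpha}\tfrac{c}{2}$ is strictly positive so the summability conclusion is nontrivial. Both follow directly from the hypotheses of the theorem and the defining constants of a $p$-uniformly convex space.
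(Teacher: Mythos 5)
Your proof is correct. The only difference from the paper is one of presentation: the paper does not write out an argument at all, but instead observes (via Proposition \ref{t:properties pafne}\eqref{t:properties pafne iii} and Remark \ref{r:P1}) that the hypothesis is exactly property (P1) of \cite{RuiLopNic15} at the fixed points, and then invokes \cite[Theorem 3.1]{RuiLopNic15} specialized to a single operator. Your telescoping of the inequality
$d(x_{k+1},y)^p \leq d(x_k,y)^p - \tfrac{1-\alpha}{\alpha}\tfrac{c}{2}d(x_{k+1},x_k)^p$
is precisely the mechanism behind that cited result in the one-operator case, so you have in effect inlined the external proof. What your version buys is a short, self-contained argument that makes the Fej\'er monotonicity and the summability $\sum_k d(x_{k+1},x_k)^p<\infty$ (a slightly stronger conclusion than asymptotic regularity alone) explicit; what the paper's version buys is brevity and a pointer to the more general statement for compositions. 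You correctly handle the two points that need care: the invariance $T(D)\subseteq D$ keeps the iterates in $D$ so the pointwise inequality applies at every step, and $\alpha\in(0,1)$ together with $c>0$ makes the telescoped coefficient strictly positive. Note also that only a single $y\in\Fix T\cap D$ is needed, so no uniformity of $\alpha$ over the fixed point set is required --- consistent with the theorem's hypotheses.
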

\begin{proof}
	By Proposition \ref{t:properties pafne}\eqref{t:properties pafne iii} and Remark \ref{r:P1},
	the statement is a specialization of \cite[Theorem 3.1]{RuiLopNic15} to the case of 
	just a single operator.  
\end{proof}
We show below, that compositions and convex combinations of (quasi) $\alpha$-firmly 
nonexpansive mappings are quasi $\alpha$-firmly nonepxansive.  Therefore, by 
the theorem above, fixed point iterations 
of such compositions and convex combinations are asymptotically regular.

\subsection{Calculus of Nonexpansive Operators}\label{s:nonexpansive}
Nonexpansiveness is preserved under compositions and, with some restrictions, under convex combinations, 
as the next result shows.  
\begin{thm}
\label{t:nonexp}
Let $D\subset G$ where $(G, d)$ is a $p$-uniformly convex space with constant $c>0$ and 
let $T_1, T_2: D\to G$.
 \begin{enumerate}[(i)]
\item\label{t:nonexp0i} If $\Fix T_2\cap \Fix T_1\neq\emptyset$ then  
 any convex combination of $T_2$ and $T_1$ is 
pointwise nonexpansive at $y\in \Fix T_2\cap \Fix T_1$ on $D$ whenever $T_2$ and $T_1$ 
are pointwise nonexpansive there.
 \item\label{t:nonexpi} When $(G, d)$ is a CAT(0) space, (that is, $p=2$, and $c=2$), 
 then any convex combination of $T_2$ and $T_1$ is 
pointwise nonexpansive at $y\in D$ on $D$ whenever $T_2$ and $T_1$ are pointwise nonexpansive there. 
\item\label{t:nonexpii} For $D_1\equiv \{z~|~z=T_1y, y\in D\}$ let $T_2:D_1\to G$ 
 be pointwise nonexpansive at $T_1y\in D_1$ on $D_1$ and let $T_1$ be pointwise nonexpansive 
 at $y\in D$.  Then the composition $T_2\circ T_1$ is pointwise nonexpansive  
 at $y$ on $D$.
\end{enumerate}
In particular, 
the set of all nonexpansive operators in CAT(0) spaces is closed under 
compositions and convex combinations. 
\end{thm}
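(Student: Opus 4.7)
The plan is to handle the three items in turn, each reducing to a short application of the structural properties of $p$-uniformly convex spaces.

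For \eqref{t:nonexp0i}, denote the convex combination by $Tx \equiv (1-\lambda) T_1 x \oplus \lambda T_2 x$ with $\lambda\in[0,1]$ and note that at a common fixed point $y\in \Fix T_1 \cap \Fix T_2$ both $T_1 y$ and $T_2 y$ coincide with $y$, so the defining geodesic is degenerate and $Ty = y$. Applying \eqref{e:p-ucvx} with base point $y$ along the geodesic from $T_1 x$ to $T_2 x$ gives
\[
d(Tx, y)^p \leq (1-\lambda)\, d(T_1 x, y)^p + \lambda\, d(T_2 x, y)^p - \tfrac{c}{2}\lambda(1-\lambda)\, d(T_1 x, T_2 x)^p.
\]
Dropping the nonpositive correction and using the hypothesized pointwise nonexpansiveness $d(T_i x, y) = d(T_i x, T_i y) \leq d(x,y)$ for $i=1,2$ yields $d(Tx,Ty)^p \leq d(x,y)^p$, which is exactly what is claimed.

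For \eqref{t:nonexpii}, the composition statement is a one-liner: for any $x\in D$, using pointwise nonexpansiveness of $T_1$ at $y$ and then of $T_2$ at $T_1 y$ in turn,
\[
d\bigl((T_2\circ T_1)x,\,(T_2\circ T_1)y\bigr) \leq d(T_1 x, T_1 y) \leq d(x,y).
\]

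The only genuinely nonroutine step is \eqref{t:nonexpi}, for which I would invoke the joint convexity of the distance along pairs of geodesics in a CAT(0) space: for any two geodesics parametrized by a common affine parameter,
\[
d\bigl((1-\lambda)a_1 \oplus \lambda b_1,\,(1-\lambda)a_2 \oplus \lambda b_2\bigr) \leq (1-\lambda)\,d(a_1,a_2) + \lambda\,d(b_1,b_2).
\]
This is a well-known equivalent formulation of the CAT(0) condition; it can be derived from \eqref{e:p-ucvx} with $p=c=2$ by the same double-application technique already used in the derivation of \eqref{eq:phi ineq} in the proof of Proposition \ref{t:Bacak fne}. Taking $a_i = T_1 x_i$ and $b_i = T_2 x_i$ and invoking the pointwise nonexpansiveness of $T_1$ and $T_2$ at $y$ (with $x_1\equiv x$ and $x_2\equiv y$) then completes the estimate. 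The closing claim of the theorem follows immediately from \eqref{t:nonexpi} and \eqref{t:nonexpii} with $D = G$. The main obstacle is precisely this joint convexity step, as it is a genuine CAT(0) phenomenon that fails in arbitrary $p$-uniformly convex spaces, which is presumably why part \eqref{t:nonexpi} is restricted to the CAT(0) setting and why part \eqref{t:nonexp0i} must use the stronger assumption of a common fixed point to absorb the nonpositive correction term.
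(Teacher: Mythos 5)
Your proposal is correct and follows essentially the same route as the paper: part (i) is the single-application specialization of the paper's inequality \eqref{e:Big Thief} to a common fixed point (where $T_\lambda y=y$ kills the cross terms), part (iii) is the same two-step chaining, and part (ii) rests on exactly the joint-convexity estimate $d(T_\lambda x, T_\lambda y)\leq (1-\lambda)\,d(T_1x,T_1y)+\lambda\, d(T_2x,T_2y)$ that the paper derives for the convex combination $T_\lambda$. The one caveat is that your derivation sketch of joint convexity is incomplete as stated: the double application of \eqref{e:p-ucvx} with $p=c=2$ only produces the cross terms of \eqref{e:Big Thief}, and one still needs the quasilinearization (Cauchy--Schwarz) inequality \eqref{eq:CS} to bound $\Delta^{(2,2)}(T_2x,T_2y,T_1x,T_1y)$ by $2\,d(T_2x,T_2y)\,d(T_1x,T_1y)$ and complete the square --- precisely the extra step the paper's proof of (ii) makes explicit, though since joint convexity is indeed a standard CAT(0) fact your appeal to it is legitimate.
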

\begin{proof}
	\eqref{t:nonexp0i}.  Let $\lambda\in(0,1)$ and define $T_\lambda :=(1-\lambda)T_2\oplus\lambda T_1$.
	Applying \eqref{e:p-ucvx} first to $T_\lambda y$ and then to $T_\lambda x$ yields
	\begin{eqnarray}
	 d(T_\lambda x, T_\lambda y)^p&\leq& (1-\lambda)d(T_\lambda x, T_2y)^p+\lambda d(T_\lambda x, T_1y)^p
	 - \tfrac{c}{2}\lambda(1-\lambda)d(T_2y, T_1y)^p\nonumber \\
	 &\leq &
	 (1-\lambda)^2d(T_2x, T_2y)^p+\lambda^2 d(T_1x, T_1y)^p +
	 \lambda(1-\lambda)\paren{d(T_1x, T_2y)^p+d(T_2x, T_1y)^p}\nonumber\\
	 &&\qquad - \frac{\lambda(1-\lambda)c}{2}\paren{d(T_2x, T_1x)^p+ d(T_2y, T_1y)^p}.
	 \label{e:Big Thief}
	\end{eqnarray}
For $y\in\Fix T_2\cap \Fix T_1$ this yields  
	\begin{eqnarray*}
	 d(T_\lambda x, T_\lambda y)^p&\leq& 
	 (1-\lambda)d(T_2x, y)^p+\lambda d(T_1x, y)^p \\
	 &\leq&
	 (1-\lambda)d(x, y)^p+\lambda d(x, y)^p = d(x, y)^p\quad \forall x\in D,
	\end{eqnarray*}
	where the last inequality uses pointwise nonexpansiveness of $T_1$ and $T_2$ at $y$.  
Therefore $(1-\lambda)T_2\oplus\lambda T_1$ is pointwise nonexpansive at $y\in \Fix T_2\cap\Fix T_1$
on $D$ for all $\lambda\in [0,1]$, as claimed.  

\eqref{t:nonexpi}.  Let $\lambda\in(0,1)$ and define $T_\lambda :=(1-\lambda)T_2\oplus\lambda T_1$. 
Applying \eqref{e:Big Thief} with $p=2$ and $c=2$ yields
\begin{align*}
d(T_\lambda x,T_\lambda y)^2&\leq  (1-\lambda)^2d(T_2x,T_2y)^2+\lambda^2 d(T_1x,T_1y)^2\\
&+(1-\lambda)\lambda (d(T_2x,T_1y)^2+d(T_1x,T_2y)^2-d(T_2y,T_1y)^2-d(T_2x,T_1x)^2)\\
& = (1-\lambda)^2d(T_2x,T_2y)^2+\lambda^2 d(T_1x,T_1y)^2+ 
(1-\lambda)\lambda \Delta^{(2,2)}(T_2x, T_2y, T_1x, T_1y)
\end{align*}
for any $x\in D$, where $\Delta^{(2,2)}$ is defined by \eqref{eq:4point p-uniform}. 
On the other hand $(G,d)$ is a CAT(0)-space, so \eqref{eq:CS} holds, and in particular, 
$$ \Delta^{(2,2)}(T_2x, T_2y, T_1x, T_1y)\leq  2d(T_2x,T_2y)d(T_1x,T_1y)$$
for any $x,y\in G$.  
Therefore 
\[
d(T_\lambda x,T_\lambda y)^2\leq \paren{(1-\lambda)d(T_2x,T_2y)+\lambda d(T_1x,T_1y)}^2
\quad\forall x\in D.
\]
By assumption both $T_2$ and $T_1$ are pointwise nonexpansive at $y$ on $D$, so
\[
d(T_\lambda x,T_\lambda y)^2\leq ((1-\lambda)d(x,y)+\lambda d(x,y))^2=d(x,y)^2
\quad\forall x\in D
\]
and hence $d(T_\lambda x,T_\lambda y)\leq  d(x,y)$ for all $x\in D$ as claimed.

\eqref{t:nonexpii}. Let $\Tbar\equiv T_2\circ T_1$.  Then 
\begin{eqnarray*}
&d(\Tbar x,\Tbar y)=d(T_2T_1x,T_2T_1y)\leq  d(T_1x,T_1y)\quad\forall T_1x\in D_1&\\
&\iff&\\
&d(\Tbar x,\Tbar y)\leq  d(T_1x,T_1y)\quad\forall x\in D&
\end{eqnarray*}
since $T_2$ is pointwise nonexpansive at $T_1y\in D_1$ on $D_1$.  But since $T_1$ is pointwise nonexpansive 
at $y$ on $D$ this yields 
\[
 d(T_1x,T_1y)\leq d(x,y)\quad \forall x\in D
\]
which establishes the claim and completes the proof.
\end{proof}


\subsection{Compositions of $\alpha$-Firmly Nonexpansive Operators} 
\label{s:compositions}
In this section we show how  
the composition of two $\alpha$-firmly nonexpansive operators is again 
$\alpha$-firmly nonexpansive.  In general this does not hold, but the 
property does hold pointwise at fixed points of the composite operator, and 
for many applications this is all that is needed.  
The next lemma relates the fixed points of compositions of $\alpha$-firmly nonexpansive mappings
to the intersection of the fixed points of the individual mappings.   
\begin{lemma}\label{t:intersections}
Let $(G, d)$ be a metric space.   
\begin{enumerate}[(i)]
\item\label{t:intersections i} Let $T_2, T_1 :G\to G$ satisfy 
$\Fix T_2\cap \Fix T_1\neq\emptyset$.  
If $T_2$ is pointwise nonexpansive at all 
 $y\in \Fix T_2\cap \Fix T_1\neq\emptyset$ on $D\subset G$ where $\Fix T_2\cap \Fix T_1\subset D$, and 
 $T_1$ is pointwise $\alpha$-firmly nonexpansive at all 
$y\in \Fix T_2\cap \Fix T_1$ on $D$,   then 
 $\Fix T_2T_1=\Fix T_2\cap \Fix T_1$.
 \item\label{t:intersections ii} Let $\{T_1, T_2, \dots, T_m\}$ be a collection of 
 quasi $\alpha$-firmly nonexpansive mappings, each with respective 
 constants $\alpha_j$ on $D\supset \cap_{j=1}^m \Fix T_j\neq \emptyset$.
 Then $\Fix \paren{T_m\circ T_{m-1}\circ\cdots\circ T_1}=\cap_{j=1}^m \Fix T_j$.
\end{enumerate}
\end{lemma}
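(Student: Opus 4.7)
The plan is to treat the inclusion $\supseteq$ as trivial in both parts (any common fixed point is fixed by the composition) and focus on the forward inclusion $\subseteq$. The engine throughout is the characterization of pointwise $\alpha$-firm nonexpansiveness at a fixed point provided by Proposition~\ref{t:properties pafne}\eqref{t:properties pafne iii}, namely the inequality \eqref{e:P1}, which supplies a quantitative improvement of pure nonexpansiveness at the reference point.

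For part~\eqref{t:intersections i}, take $x\in\Fix(T_2T_1)$ and any $y\in\Fix T_2\cap\Fix T_1$. Pointwise nonexpansiveness of $T_2$ at $y$ together with $T_2T_1x=x$ and $T_2y=y$ gives
\[
d(x,y)=d(T_2T_1x,T_2y)\;\leq\;d(T_1x,y),
\]
while \eqref{e:P1} applied to $T_1$ at the fixed point $y$ yields
\[
d(T_1x,y)^p \;\leq\; d(x,y)^p - \tfrac{1-\alpha}{\alpha}\tfrac{c}{2}d(T_1x,x)^p.
\]
Chaining these two bounds collapses the $d(\cdot,y)^p$ terms and forces $d(T_1x,x)=0$, so $T_1x=x$, and then $T_2x=T_2T_1x=x$.

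For part~\eqref{t:intersections ii}, I would avoid an induction on~\eqref{t:intersections i} and give a direct telescoping argument. Given $x\in\Fix(T_m\circ\cdots\circ T_1)$ and a reference point $y\in\cap_{j=1}^m\Fix T_j$, set $x_0:=x$ and $x_j:=T_jx_{j-1}$, so that $x_m=x_0$ by the fixed-point property of the composition. Applying \eqref{e:P1} at $y$ for each $T_j$ with its constant $\alpha_j$ and summing telescopically gives
\[
d(x_m,y)^p \;\leq\; d(x_0,y)^p \;-\; \sum_{j=1}^m \tfrac{1-\alpha_j}{\alpha_j}\tfrac{c}{2}\,d(x_j,x_{j-1})^p.
\]
Since $x_m=x_0$ and every summand is nonnegative, each term must vanish, so $x_{j-1}=x_j$ for all $j$, i.e.\ $T_jx=x$ for every~$j$.

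The main subtlety is domain bookkeeping in part~\eqref{t:intersections ii}: to invoke \eqref{e:P1} at each telescoping step the intermediate iterates $x_j$ must lie in the set $D$ on which the quasi $\alpha$-firm nonexpansive property is assumed. Implicit in the statement is therefore some form of forward invariance, $T_j(D)\subseteq D$ (so that the orbit stays in $D$ once $x_0\in D$), or at minimum a restriction of the conclusion to fixed points of the composition that lie in $D$; this should either be verified in applications or stated as a hypothesis. Beyond this, nothing deeper is required: once the fixed-point characterization \eqref{e:P1} is in hand, the whole lemma reduces to chaining inequalities and collapsing a telescope.
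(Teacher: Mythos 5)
Your proof is correct, and it is worth separating the two parts. For part~\eqref{t:intersections i} your argument is essentially the paper's: the paper also chains pointwise nonexpansiveness of $T_2$ at $y$ with \eqref{e:P1} for $T_1$ at $y$ to get $d(x,y)^p\le d(x,y)^p-\tfrac{1-\alpha}{\alpha}\tfrac{c}{2}d(x,T_1x)^p$; the only cosmetic difference is that the paper first splits off the degenerate cases $T_1x\in\Fix T_2$ and $x\in\Fix T_1$ and then derives a strict contradiction in the remaining case, whereas you conclude $d(T_1x,x)=0$ directly, which handles all cases at once and is arguably cleaner. For part~\eqref{t:intersections ii} you genuinely diverge: the paper disposes of it in one line by invoking Remark~\ref{r:P1} and citing \cite[Proposition~2.1]{RuiLopNic15}, while you give a self-contained telescoping argument summing \eqref{e:P1} along the orbit $x_0,\dots,x_m$ with $x_m=x_0$ and using positivity of each coefficient $\tfrac{1-\alpha_j}{\alpha_j}\tfrac{c}{2}$ to kill every increment. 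Your version buys self-containedness and makes the mechanism transparent; the paper's buys brevity by outsourcing exactly this computation. Your remark on domain bookkeeping is well taken and applies equally to part~\eqref{t:intersections i} (one needs $x\in D$ and $T_1x\in D$ to invoke the pointwise properties), a point the paper's own proof also passes over silently; flagging forward invariance of $D$ as an implicit hypothesis is the right call.
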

\begin{proof}
\eqref{t:intersections i}
 The inclusion $\Fix T_2\cap \Fix T_1\subseteq \Fix T_2T_1$ is obvious. Now let 
 $x$ be any point is $\Fix T_2T_1$ and $y$ any point in $\Fix T_2\cap \Fix T_1$. 
 There are three mutually exclusive 
 cases. First let $T_1x\in\Fix T_2$ then $T_1x=T_2T_1x=x$ implies $x\in \Fix T_2\cap \Fix T_1$. 
 Second let $x\in \Fix T_1$ then $x=T_2T_1x=T_2x$ implies $x\in \Fix T_2\cap \Fix T_1$. Finally, 
 let $x\notin \Fix T_1$ and $T_1x\notin \Fix T_2$. This yields
\begin{align*}
d(x,y)^p=d(T_2T_1x,T_2y)^p\leq  d(T_1x,y)^p&=d(T_1x,T_1y)^p\\
&\leq  d(x,y)^p-\frac{1-\alpha}{\alpha}\tfrac{c}{2}d(x,T_1x)^p
\end{align*}
where the first inequality follows from pointwise nonexpansiveness of $T_2$ at  
$y\in \Fix T_2\cap \Fix T_1$ on $D$, and the 
second inequality follows from the assumption that $T_1$ is 
pointwise $\alpha$-firmly nonexpansive at $y\in \Fix T_2\cap \Fix T_1$ on $D$ and 
Proposition \ref{t:properties pafne}\eqref{t:properties pafne iii}.  But this implies that 
$d(x,y)^p<d(x,y)^p$, which  is impossible.   
Therefore $\Fix T_2T_1\subseteq \Fix T_2\cap\Fix T_1$ as claimed.

\eqref{t:intersections ii}  In light of Remark \ref{r:P1}, this follows immediately from 
\cite[Proposition 2.1]{RuiLopNic15}.
\end{proof}

\begin{lemma}\label{t:afne of compositions}
 Let $(G, d)$ be a $p$-uniformly convex space with constant $c$ and let $D\subset G$.  
 Let  $T_1:D\to G$ be pointwise $\alpha$-firmly nonexpansive at $y$ on 
 $D$ with constant $\alpha_1$
 and let $T_2:D_1\to G$ be pointwise $\alpha$-firmly nonexpansive at $T_1y$ on 
 $D_1$ with constant 
 $\alpha_2$ where $D_1\equiv\{ T_1x~|~x\in D\}$.  
 Then the composition $\Tbar\equiv T_2\circ T_1$ is pointwise $\alpha$-firmly nonexpansive 
 at $y$ on $D$ whenever 
 \begin{equation}
\label{eq:afne composition}
\exists~ \alphabar\in(0,1):\quad 
\frac{1-\alpha_{1}}{\alpha_{1}}\psi^{(p,c)}_{T_1}(x,y)
 + \frac{1-\alpha_{2}}{\alpha_{2}}\psi^{(p,c)}_{T_2}(T_1x,T_1y)
\geq \frac{1-\alphabar}{\alphabar}\psi^{(p,c)}_{\Tbar}(x,y)
\quad \forall x\in D.
\end{equation}
\end{lemma}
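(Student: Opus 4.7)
The plan is to obtain the claim by directly chaining the two pointwise $\alpha$-firm nonexpansiveness inequalities, one for each factor, and then applying the hypothesis \eqref{eq:afne composition} to collapse the two transport-discrepancy remainders into a single one for $\Tbar$. The structure of the statement is essentially telescoping, so no new geometric machinery is required beyond Definition \ref{d:pafne}; the whole content of the lemma is packaged into the hypothesis.

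First, I fix an arbitrary $x \in D$ and apply the pointwise $\alpha$-firm nonexpansiveness of $T_1$ at $y$ on $D$, obtaining
\[
d(T_1 x, T_1 y)^p \leq d(x,y)^p - \tfrac{1-\alpha_1}{\alpha_1}\,\psi^{(p,c)}_{T_1}(x,y).
\]
Next, since $T_1 x \in D_1$ and $T_1 y \in D_1$ (the latter because $y \in D$), I invoke the pointwise $\alpha$-firm nonexpansiveness of $T_2$ at $T_1 y$ on $D_1$ with the point $T_1 x$ to get
\[
d(T_2 T_1 x, T_2 T_1 y)^p \leq d(T_1 x, T_1 y)^p - \tfrac{1-\alpha_2}{\alpha_2}\,\psi^{(p,c)}_{T_2}(T_1 x, T_1 y).
\]
Substituting the first inequality into the second yields
\[
d(\Tbar x, \Tbar y)^p \leq d(x,y)^p - \tfrac{1-\alpha_1}{\alpha_1}\,\psi^{(p,c)}_{T_1}(x,y) - \tfrac{1-\alpha_2}{\alpha_2}\,\psi^{(p,c)}_{T_2}(T_1 x, T_1 y).
\]

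Finally, I apply the standing hypothesis \eqref{eq:afne composition}, which provides a constant $\alphabar \in (0,1)$ such that the sum of the two remainders on the right dominates $\frac{1-\alphabar}{\alphabar}\,\psi^{(p,c)}_{\Tbar}(x,y)$. Thus
\[
d(\Tbar x, \Tbar y)^p \leq d(x,y)^p - \tfrac{1-\alphabar}{\alphabar}\,\psi^{(p,c)}_{\Tbar}(x,y)
\qquad \forall x \in D,
\]
which is exactly the defining inequality (\ref{eq:pafne}) with $\epsilon = 0$ for $\Tbar$ at $y$ on $D$ with constant $\alphabar$, establishing the conclusion.

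There is really no obstacle internal to this proof: the argument is a two-line chain followed by an application of the assumed inequality. The genuinely nontrivial question, which the lemma isolates and leaves aside, is when condition \eqref{eq:afne composition} actually holds and with what value of $\alphabar$. That will presumably be the content of Theorem \ref{t:compositionthm}, where one must exploit the geometry of $p$-uniformly convex spaces to bound $\psi^{(p,c)}_{\Tbar}(x,y)$ from above by a suitable combination of $\psi^{(p,c)}_{T_1}(x,y)$ and $\psi^{(p,c)}_{T_2}(T_1 x, T_1 y)$, likely via the triangle inequality in $d(\cdot,\cdot)^p$ and the identity (\ref{e:psi-delta}) relating $\psi_T^{(p,c)}$ to the four-point discrepancy $\Delta^{(p,c)}_T$.
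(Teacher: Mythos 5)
Your proposal is correct and follows essentially the same route as the paper's proof: chain the two pointwise $\alpha$-firm nonexpansiveness inequalities (for $T_2$ at $T_1y$ on $D_1$ and for $T_1$ at $y$ on $D$), then invoke \eqref{eq:afne composition} to replace the two remainder terms by $\tfrac{1-\alphabar}{\alphabar}\psi^{(p,c)}_{\Tbar}(x,y)$. Your closing remark about where the real work lies is also accurate; that is exactly what Theorem \ref{t:compositionthm} handles via \eqref{e:p-ucvx}.
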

\begin{proof}
Since $T_2$ is pointwise $\alpha$-firmly nonexpansive at $T_1y$ with 
constant $\alpha_2$ on $D_1$ we have 
\[ 
d(\Tbar x,\Tbar y)^p\leq  d(T_1x,T_1y)^p-\frac{1-\alpha_2}{\alpha_2}\psi^{(p,c)}_{T_2}(T_1x,T_1y)
 \quad \forall T_1x\in D_1
\]
where $\psi^{(p,c)}_{T_2}$ is defined by \eqref{eq:psi}.
On the other hand, $\{x~|~T_1x\in D_1\} = D$, and  since $T_1$ is 
pointwise $\alpha$-firmly nonexpansive at $y$ on $D$ with 
constant $\alpha_1$ we have 
\begin{align*}
d(\Tbar x,\Tbar y)^p\leq  d(x,y)^p&
-\frac{1-\alpha_1}{\alpha_1}\psi^{(p,c)}_{T_1}(x, y)
-\frac{1-\alpha_2}{\alpha_2}\psi^{(p,c)}_{T_2}(T_1x,T_1y),
\quad\forall x\in D.
\end{align*}
Whenever \eqref{eq:afne composition} holds, we can conclude that  
\begin{eqnarray*}
\exists~ \alphabar\in(0,1):\quad 
d(\Tbar x,\Tbar y)^p\leq
d(x,y)^p-
\frac{1-\alphabar}{\alphabar}\psi^{(p,c)}_{\Tbar }(x,y)
\quad\forall x\in D.
\end{eqnarray*}
\end{proof}

 \begin{thm}
 \label{t:compositionthm}
Let $(G, d)$ be a $p$-uniformly convex space.  Let  
$T_1:D\to G$ for $D\subset G$, $T_2:D_1\to G$ for $D_1\equiv\{T_1x~|~x\in D\}$, 
define $\Tbar\equiv T_2\circ T_1$ 
and let $\Fix \Tbar \subset D$ and $\Fix T_1\cap \Fix T_2$ both be nonempty.  
If $T_1$ is pointwise $\alpha$-firmly 
nonexpansive at all $y\in \Fix \Tbar $ 
 with constant $\alpha_1$ on $D$, and  
 if $T_2$ is pointwise $\alpha$-firmly nonexpansive at all $y\in\Fix \Tbar $ 
 with constant $\alpha_2$ on $D_1$,  then the composite operator 
 $\Tbar $ is quasi $\alpha$-firmly nonexpansive  on $D$ with constant 
 \begin{equation}\label{eq:alphabar}
	 \alphabar=\tfrac{\alpha_1+\alpha_2-2\alpha_1\alpha_2}%
	 {\frac{c}{2}\paren{1-\alpha_1-\alpha_2+\alpha_1\alpha_2}+\alpha_1+\alpha_2-2\alpha_1\alpha_2}.
 \end{equation}
\end{thm}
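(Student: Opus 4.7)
The plan is to reduce the claim to the sufficient condition of Lemma \ref{t:afne of compositions} and then extract the coefficient $\bar\alpha$ via a careful application of the $p$-uniform convexity inequality \eqref{e:p-ucvx}. First I would observe that, since both $T_1$ and $T_2$ are $\alpha$-firmly nonexpansive (hence pointwise nonexpansive by Proposition \ref{t:properties pafne}\eqref{t:properties pafne iv}) at every point of $\Fix\bar T \supseteq \Fix T_1 \cap \Fix T_2$, Lemma \ref{t:intersections}\eqref{t:intersections i} applies and yields $\Fix \bar T = \Fix T_1 \cap \Fix T_2$. Consequently, every $y \in \Fix\bar T$ satisfies $T_1 y = T_2 y = y$, so by Proposition \ref{t:properties pafne}\eqref{t:properties pafne ii} the transport discrepancies collapse to $\psi^{(p,c)}_{T_1}(x,y) = \tfrac{c}{2} d(T_1 x, x)^p$, $\psi^{(p,c)}_{T_2}(T_1 x, y) = \tfrac{c}{2} d(\bar T x, T_1 x)^p$, and $\psi^{(p,c)}_{\bar T}(x, y) = \tfrac{c}{2} d(\bar T x, x)^p$.

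Fixing such a $y$ and chaining the simplified characterization of Proposition \ref{t:properties pafne}\eqref{t:properties pafne iii} (first for $T_1$ at $y$, then for $T_2$ at $T_1 y = y$ with input $z = T_1 x$) produces
\[
d(\bar T x, y)^p \leq d(x, y)^p - \tfrac{c}{2}\left(\tfrac{1-\alpha_1}{\alpha_1} d(T_1 x, x)^p + \tfrac{1-\alpha_2}{\alpha_2} d(\bar T x, T_1 x)^p\right), \quad \forall x \in D.
\]
Matching this against the target inequality characterizing pointwise $\alpha$-firm nonexpansiveness of $\bar T$ at $y$ (Proposition \ref{t:properties pafne}\eqref{t:properties pafne iii} again), the remaining task is to prove
\[
\tfrac{1-\alpha_1}{\alpha_1} d(T_1 x, x)^p + \tfrac{1-\alpha_2}{\alpha_2} d(\bar T x, T_1 x)^p \geq \tfrac{1-\bar\alpha}{\bar\alpha} d(\bar T x, x)^p.
\]

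The critical step, and the only point at which the constant $c$ enters, is to bound $d(\bar T x, x)^p$ from above. I would apply \eqref{e:p-ucvx} to the geodesic $(1-t)x \oplus t \bar T x$ viewed from the observer $T_1 x$: discarding the nonnegative term $d(T_1 x, (1-t)x \oplus t \bar T x)^p$ and dividing by $t(1-t)$ yields
\[
\tfrac{c}{2}\, d(\bar T x, x)^p \leq \tfrac{1}{t} d(T_1 x, x)^p + \tfrac{1}{1-t} d(\bar T x, T_1 x)^p, \quad \forall t \in (0,1).
\]
The final step is an elementary tuning of $t$ so that the ratio $\tfrac{1}{t} : \tfrac{1}{1-t}$ matches $\tfrac{1-\alpha_1}{\alpha_1} : \tfrac{1-\alpha_2}{\alpha_2}$. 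A short calculation gives the unique solution $t = \tfrac{\alpha_1(1-\alpha_2)}{\alpha_1+\alpha_2-2\alpha_1\alpha_2}$; substituting back identifies $\tfrac{1-\bar\alpha}{\bar\alpha} = \tfrac{c}{2}\cdot\tfrac{(1-\alpha_1)(1-\alpha_2)}{\alpha_1+\alpha_2-2\alpha_1\alpha_2}$, which inverts to exactly \eqref{eq:alphabar}.

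The only genuine obstacle is guessing the right vantage point for \eqref{e:p-ucvx}: recognizing that the intermediate image $T_1 x$ must play the role of the observer $z$ and that the geodesic of interest is the one joining the input $x$ to the composite output $\bar T x$. Everything else is essentially linear bookkeeping, and as a sanity check one recovers the classical Hilbert-space composition formula $\bar\alpha = \tfrac{\alpha_1 + \alpha_2 - 2\alpha_1\alpha_2}{1-\alpha_1\alpha_2}$ in the case $c = 2$.
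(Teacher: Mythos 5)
Your proposal is correct and follows essentially the same route as the paper: identify $\Fix\Tbar=\Fix T_1\cap\Fix T_2$ via Lemma \ref{t:intersections}, collapse the transport discrepancies at fixed points, chain the two pointwise estimates (this is exactly the content of Lemma \ref{t:afne of compositions}), and then apply \eqref{e:p-ucvx} with observer $T_1x$ on the geodesic from $x$ to $\Tbar x$ with the same balancing choice $t=\kappa_2/(\kappa_1+\kappa_2)$. Your closed forms for $t$, for $\tfrac{1-\alphabar}{\alphabar}$, and the Hilbert-space sanity check all agree with \eqref{eq:alphabar}.
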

\begin{proof}
	By Lemma \ref{t:afne of compositions}, it suffices to show \eqref{eq:afne composition}
	at all points  $y\in \Fix \Tbar $.  First, note that by Lemma \ref{t:intersections},  
	$\Fix \Tbar =\Fix T_2\cap \Fix T_1$, 
	so by \eqref{e:psi-Fix T} we have  
	$\psi^{(p.c)}_{T_1}(x,y)=\tfrac{c}{2}d(x,T_1x)^p$, $ \psi^{(p.c)}_{T_2}(T_1x,T_1y)=\tfrac{c}{2}d(T_1x,\Tbar x)^p$, and 
 $\psi^{(p.c)}_{\Tbar }(x,y)=\tfrac{c}{2}d(x,\Tbar x)^p$. 
 Then whenever $y\in\Fix \Tbar $ the inequality \eqref{eq:afne composition} simplifies to 
 \begin{equation}\label{eq:afne composition Fix TS}
\exists \kappabar>0:\quad	 
\kappa_1 d(x, T_1x)^p+\kappa_2 d(T_1x, \Tbar x)^p\geq \kappabar d(x, \Tbar x)^p  
\qquad \forall x\in D,
 \end{equation}
 where $\kappa_1\equiv\frac{1-\alpha_1}{\alpha_1}$, 
 $\kappa_2\equiv\frac{1-\alpha_2}{\alpha_2}$ and 
 $\kappabar\equiv\frac{1-\alphabar}{\alphabar}$ with $\alphabar\in(0,1)$.
By \eqref{e:p-ucvx}, we have 
\begin{eqnarray}
\tfrac{c}{2}t(1-t)d(x,\Tbar x)^p&\leq& \tfrac{c}{2}t(1-t)d(x,\Tbar x)^p+ 
d(T_1x, (1-t)x\oplus t\Tbar x)^p\nonumber\\
&\leq& (1-t)d(T_1x,x)^p+
td(T_1x,\Tbar x)^p
\quad\forall x\in G, \forall t\in(0,1).
\label{e:p-ucvx2}
\end{eqnarray}
Letting $t=\tfrac{\kappa_2}{\kappa_1+\kappa_2}$ yields
$(1-t)=\tfrac{\kappa_1}{\kappa_1+\kappa_2}$, so that 
\eqref{e:p-ucvx2} becomes
\begin{eqnarray}
	\tfrac{c}{2}\tfrac{\kappa_1\kappa_2}{\kappa_1+\kappa_2}d(x,\Tbar x)^p
&\leq& \kappa_1 d(T_1x,x)^p+
\kappa_2 d(T_1x,\Tbar x)^p
\quad\forall x\in G.
\label{e:p-ucvx3}
\end{eqnarray}
It follows that  \eqref{eq:afne composition Fix TS}  holds for any  
$\kappabar\in (0,\tfrac{c\kappa_1\kappa_2}{2(\kappa_1+\kappa_2)}]$.  We conclude that 
the composition $\Tbar $ is quasi $\alpha$-firmly nonexpansive with constant 
\[
\alphabar=\frac{\kappa_1+\kappa_2}{\frac{c}{2}\kappa_1\kappa_2+\kappa_1+\kappa_2}. 
\]
A short calculation shows that this is the same as \eqref{eq:alphabar}, which completes the proof.
\end{proof}
\begin{remark}\label{r:compositions space}
	The fact that quasi $\alpha$-firm nonexpansiveness hinges on inequality 
	\eqref{eq:afne composition} or, more specifically \eqref{eq:afne composition Fix TS}, 
	is a property of the individual operators $T_2$ and $T_1$.  Whether or not the subsequent 
	inequality 
	\eqref{eq:afne composition Fix TS} holds is a property of the {\em space} and is entirely 
	independent of the operators.    Also note that the constant $\alphabar$ given 
	in \eqref{eq:alphabar} corresponds exactly to the constant found in 
	\cite[Proposition 4.44]{BauCom17} for mappings on Hilbert spaces.  
\end{remark}
\begin{corollary}[finite compositions of quasi $\alpha$-firmly nonexpansive 
operators are quasi $\alpha$-firmly nonexpansive]\label{t:m-compositionthm}
Let $(G, d)$ be a $p$-uniformly convex space.  
Let $T_1:D_1\to G$ where $D_1\subset G$ and for 
$j=2, 3, \dots,m$ let  
$T_j:D_{j}\to G$ for $D_j\equiv\{T_{j-1}x~|~x\in D_{j-1}\}$.  
If $T_j$ is quasi $\alpha$-firmly 
nonexpansive  
 with constant $\alpha_j$ on $D_j$ ($j=1,2,\dots,m$) and 
 $\Fix (T_m\circ T_{m-1}\circ\cdots\circ T_1)\subset D_1$ is nonempty,  
 then the composite operator 
 $T\equiv T_m\circ T_{m-1}\circ\cdots\circ T_1$ is quasi $\alpha$-firmly 
 nonexpansive  on $D_1$ with constant given recursively by 
 \begin{subequations}\label{e:alpha comp}
	 \begin{equation}\label{e:alphabarm}
		 \alphabar_{m}=\frac{\kappabar_{m-1}+\kappa_m}%
		 {\frac{c}{2}\kappabar_{m-1}\kappa_m+%
	 \kappabar_{m-1}+\kappa_m} \quad  (m\geq 3) 	  
	 \end{equation}
 where 
 \begin{eqnarray}\label{e:kappa(bar)j}
  \kappabar_j&=&\frac{1-\alphabar_j}{\alphabar_j} \quad (j\geq 2)\\
  \kappa_j&\equiv& \frac{1-\alpha_j}{\alpha_j}\quad  (j\geq 1)\\
  \alphabar_2&\equiv& \frac{\kappa_1+\kappa_2}{\tfrac{c}{2}\kappa_1\kappa_2+\kappa_1+\kappa_2}.
 \end{eqnarray}
 \end{subequations}
\end{corollary}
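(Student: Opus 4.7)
The plan is to proceed by straightforward induction on $m$, with the base case $m=2$ being precisely Theorem \ref{t:compositionthm}. Rewriting the constant \eqref{eq:alphabar} in terms of $\kappa_1 = (1-\alpha_1)/\alpha_1$ and $\kappa_2 = (1-\alpha_2)/\alpha_2$, exactly as carried out in the last display of the proof of that theorem, recovers the claimed $\alphabar_2 = (\kappa_1+\kappa_2)/(\tfrac{c}{2}\kappa_1\kappa_2 + \kappa_1+\kappa_2)$.

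For the inductive step with $m \geq 3$, assume the statement holds for compositions of $m-1$ operators, and set $\Tbar_{m-1} \equiv T_{m-1} \circ \cdots \circ T_1$. By the inductive hypothesis, $\Tbar_{m-1}$ is quasi $\alpha$-firmly nonexpansive on $D_1$ with constant $\alphabar_{m-1}$. A telescoping of the recursive definition $D_{j} = \{T_{j-1} x \mid x \in D_{j-1}\}$ shows that $D_m = \{\Tbar_{m-1} x \mid x \in D_1\}$, so $T_m : D_m \to G$ is defined precisely on the image of $D_1$ under $\Tbar_{m-1}$. To verify the remaining hypotheses of Theorem \ref{t:compositionthm} for the pair $(\Tbar_{m-1}, T_m)$, I would invoke Lemma \ref{t:intersections}(ii) to identify $\Fix T = \bigcap_{j=1}^m \Fix T_j$ and, applied to $T_1,\ldots,T_{m-1}$, also $\Fix \Tbar_{m-1} = \bigcap_{j=1}^{m-1} \Fix T_j$. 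Combined with the standing nonemptiness assumption on $\Fix T$, this yields $\Fix T_m \cap \Fix \Tbar_{m-1} = \Fix T \neq \emptyset$, so the pair $(\Tbar_{m-1}, T_m)$ satisfies the hypotheses of Theorem \ref{t:compositionthm} with constants $(\alphabar_{m-1}, \alpha_m)$.

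Applying Theorem \ref{t:compositionthm} then yields that $T = T_m \circ \Tbar_{m-1}$ is quasi $\alpha$-firmly nonexpansive on $D_1$ with constant obtained from \eqref{eq:alphabar} under the substitutions $\alpha_1 \leftarrow \alphabar_{m-1}$ and $\alpha_2 \leftarrow \alpha_m$. Rewriting this expression in $\kappa$-form with $\kappabar_{m-1} = (1-\alphabar_{m-1})/\alphabar_{m-1}$ and $\kappa_m = (1-\alpha_m)/\alpha_m$, just as in the final step of the proof of Theorem \ref{t:compositionthm}, reproduces exactly the recursive formula \eqref{e:alphabarm} and closes the induction. There is no real obstacle beyond bookkeeping; the substantive analytic content, namely the $p$-uniform convexity inequality \eqref{e:p-ucvx} applied along the geodesic from $\Tbar_{m-1}x$ to $T\Tbar_{m-1}x$, has already been absorbed into Theorem \ref{t:compositionthm}, and the induction step only invokes the composition formula together with the identification of fixed point sets.
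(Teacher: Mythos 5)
Your proposal is correct and follows exactly the route the paper intends: the paper's own proof is the one-line remark that the result follows from Theorem \ref{t:compositionthm} by an elementary induction, and your argument is precisely that induction spelled out, including the necessary identification $\Fix(T_m\circ\cdots\circ T_1)=\cap_{j=1}^m\Fix T_j$ via Lemma \ref{t:intersections} so that the pairwise theorem applies to $(\Tbar_{m-1},T_m)$. Nothing further is needed.
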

\begin{proof}
The result follows from Theorem \ref{t:compositionthm} and 
an elementary induction argument. 
\end{proof}

\begin{remark}\label{r:compositions Hilbert} It is well known that the composition of 
	two firmly nonexpansive mappings (for instance, projectors) in a Hilbert space 
	($\alpha=1/2$, $p=2$, and $c=2$)
	are $\alpha$-firmly nonexpansive with constant $\alphabar=\tfrac{2}{3}$.   
	Theorem \ref{t:compositionthm}
	yields this as a special case. 
\end{remark}

\subsection{Convex Combinations of $\alpha$-Firmly Nonexpansive Operators}
\label{s:convex combinations}
In this chapter we see that $\alpha$-formnes ist preserved under $p$-convex combinations of operators. To prove this we use the concept of $p$-uniformly convex functions.
\begin{mydef}
Let $(G,d)$ be a $p$-uniformly convex space. A function $f\colon G \rightarrow \Rbb$ is said to be $p$-uniformly convex with 
constant $m>0$ if
\begin{align*}
f(tx \oplus (1-t)y) \leq t f(x)+ (1-t) f(y) - \frac{1}{2} m t (1-t) d(x,y)^p 
\quad\forall x,y \in G, ~\forall t \in [0,1].
\end{align*}
\end{mydef}

\begin{remark}
It is obvious from the definition that the sum of two $p$-uniformly convex functions with constants $m_1$ and $m_2$ is $p$-uniformly convex with constant $m=m_1+m_2$. For any $y \in G$ the distance function $x \mapsto d(x,z)$ is a $p$-uniformly convex function with constant $m=c$ if $(G,d)$ is a $p$-uniformly convex space with constant $c>0$. 
\end{remark}

\begin{lemma} \label{minofstronglyconvexfunction}
Let $f\colon G \rightarrow \Rbb$ be $p$-uniformly convex with constant $m>0$ and $x \in \argmin f \neq \emptyset$. Then
\begin{align*}
f(y) \geq f(x) + \frac{m}{2} d(x,y)^p \quad \forall y \in G  \text{ and } x \in \argmin f. 
\end{align*}
\end{lemma}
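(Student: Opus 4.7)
The plan is to combine the $p$-uniform convexity inequality for $f$ with the defining property of a minimizer, then pass to a limit along the geodesic from $y$ to $x$. No nontrivial ingredient is needed beyond the definition itself, so this should be a short and direct argument.

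More concretely, fix $y \in G$ and $x \in \argmin f$, and for each $t \in [0,1)$ let $z_t := tx \oplus (1-t)y$ denote the corresponding point on the geodesic. Applying the definition of $p$-uniform convexity with constant $m$ gives
\begin{equation*}
f(z_t) \leq t f(x) + (1-t) f(y) - \tfrac{1}{2} m \, t(1-t) \, d(x,y)^p.
\end{equation*}
Since $x \in \argmin f$, we also have $f(x) \leq f(z_t)$. Chaining these two inequalities and subtracting $t f(x)$ from both sides yields
\begin{equation*}
(1-t) f(x) \leq (1-t) f(y) - \tfrac{1}{2} m \, t(1-t) \, d(x,y)^p.
\end{equation*}
For $t \in [0,1)$ we have $1-t > 0$, so dividing through produces
\begin{equation*}
f(x) \leq f(y) - \tfrac{1}{2} m \, t \, d(x,y)^p.
\end{equation*}

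The final step is to let $t \uparrow 1$, which gives $f(x) + \frac{m}{2} d(x,y)^p \leq f(y)$, as claimed. Since $y \in G$ was arbitrary, this holds for all $y$, and since $x \in \argmin f$ was arbitrary the conclusion follows. There is really no substantive obstacle here: the only subtlety is that we must divide by $1-t$ before passing to the limit (so that the quadratic factor $t(1-t)$ becomes the linear factor $t$ that converges to $1$), but this is a routine manipulation and does not rely on any additional structure of the space $G$ beyond the existence of the geodesic $z_t$ guaranteed by $p$-uniform convexity.
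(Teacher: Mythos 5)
Your proof is correct and follows essentially the same route as the paper: apply the $p$-uniform convexity inequality at the geodesic point, use $f(x)\leq f(z_t)$ from minimality, divide by $1-t$, and let $t\uparrow 1$. No discrepancies.
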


\begin{proof}
Let $x \in \argmin f$ and $f$ be $p$-uniformly convex with constant $m$. Then
\begin{align*}
(1-t) f(y) &\geq f(tx \oplus (1-t)y)-t f(x) + \frac{m}{2} t (1-t) d(x,y)^p \\
& \geq (1-t) f(x) + \frac{m}{2} t (1-t) d(x,y)^p
\end{align*}
by the definition of $p$-uniformly convex functions and $x \in \argmin f$.
Now divide by $1-t$ and take the limit $t\rightarrow 1$ to obtain the claim.
\end{proof}

The $p$-convex combination of $n$ points $x_1, \ldots, x_n$ with weights 
$\omega_1,\omega_2, \ldots, \omega_n \in [0,1]$ such that $\sum_{i=1}^n \omega_i=1 $ is denoted $_p\!\!\oplus_i^n \omega_i x_i$ where  
\begin{align}
 _p\!\!\oplus_i^n \omega_i x_i\equiv  \argmin_y \sum_{i=1}^n \omega_i d(y,x_i)^p. \label{eq:pcvxcomb}
\end{align}  
For the convex combination of Operators $T_i$ is defined by
\begin{align}
\mathscr{T}x\coloneqq\argmin_y  \sum_{i=1}^n \omega_i d(y,T_i x)^p 
\label{eq:mathscrT}
\end{align}
and we denote $\mathscr{T} = _p\!\!\oplus_i^n \omega_i T_i x$.

Due to the next proposition $p$-convex combinations exist and are unique in complete p-uniformly convex spaces.

\begin{proposition}
Let $(G,d)$ be a complete $p$-uniformly convex space with constant $c>0$. Then the $\argmin$ in \eqref{eq:pcvxcomb} exists and is unique.
\end{proposition}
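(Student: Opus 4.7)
The plan is to deduce both existence and uniqueness from the fact that the objective function $f(y) := \sum_{i=1}^n \omega_i d(y,x_i)^p$ is itself $p$-uniformly convex (as a function, in the sense of the definition above), so that Lemma~\ref{minofstronglyconvexfunction} and a midpoint/Cauchy argument can be applied.

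\medskip

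First I would verify that $f$ is $p$-uniformly convex with constant $c$. By the defining inequality~\eqref{e:p-ucvx}, for every fixed $x_i$ the function $g_i(y) := d(y,x_i)^p$ is $p$-uniformly convex with constant $c$. Since scaling by $\omega_i \ge 0$ produces a $p$-uniformly convex function with constant $\omega_i c$, and (by the remark following the definition) sums of $p$-uniformly convex functions have constants that add, the function $f = \sum_i \omega_i g_i$ is $p$-uniformly convex with constant $c \sum_i \omega_i = c$.

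\medskip

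Uniqueness is then essentially free: if $y^\star, y^{\star\star} \in \argmin f$, Lemma~\ref{minofstronglyconvexfunction} gives
\[
f(y^{\star\star}) \;\ge\; f(y^\star) + \tfrac{c}{2}\, d(y^\star, y^{\star\star})^p,
\]
and since $f(y^{\star\star}) = f(y^\star)$ we conclude $d(y^\star,y^{\star\star}) = 0$.

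\medskip

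For existence, set $f^\star := \inf_{y\in G} f(y) \ge 0$ and pick a minimizing sequence $(y_k)_{k\in\Nbb}$ with $f(y_k) \downarrow f^\star$. The main step is to show that $(y_k)$ is Cauchy. For any $k,l$, applying $p$-uniform convexity of $f$ at the midpoint $z_{kl} := \tfrac{1}{2}y_k \oplus \tfrac{1}{2}y_l$ yields
\[
f^\star \;\le\; f(z_{kl}) \;\le\; \tfrac{1}{2} f(y_k) + \tfrac{1}{2} f(y_l) - \tfrac{c}{8}\, d(y_k,y_l)^p,
\]
so
\[
\tfrac{c}{8}\, d(y_k,y_l)^p \;\le\; \tfrac{1}{2}\paren{f(y_k) + f(y_l)} - f^\star \;\longrightarrow\; 0
\]
as $k,l \to \infty$. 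Hence $(y_k)$ is Cauchy, and by completeness converges to some $y^\star \in G$. Continuity of $f$ (which follows from the $1$-Lipschitz continuity of each $d(\cdot,x_i)$ and continuity of $t \mapsto t^p$) gives $f(y^\star) = \lim_k f(y_k) = f^\star$, proving that the $\argmin$ is nonempty.

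\medskip

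The main obstacle I anticipate is mostly bookkeeping rather than a deep difficulty: one must be careful that the ``sum of $p$-uniformly convex functions'' statement in the remark combines correctly with scaling by weights $\omega_i$ to yield constant exactly $c$, and that $f$ is indeed continuous so that the limit of the Cauchy sequence is a genuine minimizer. Everything else is a direct application of Lemma~\ref{minofstronglyconvexfunction} and the standard midpoint Cauchy trick enabled by $p$-uniform convexity.
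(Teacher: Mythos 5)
Your proof is correct and follows essentially the same route as the paper: a minimizing sequence, the midpoint inequality from $p$-uniform convexity giving $\tfrac{c}{8}d(y_k,y_l)^p\le\tfrac12(f(y_k)+f(y_l))-f^\star\to 0$, hence Cauchy, then completeness. Your packaging of the objective as a $p$-uniformly convex function (so that Lemma~\ref{minofstronglyconvexfunction} handles uniqueness and continuity handles the limit) only makes explicit what the paper's terser argument leaves implicit.
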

This is a special case of existence and uniqueness of p-barycenters in $p$-uniform convex spaces (see \cite[Lemma 3.5]{Kuwae2014}.
\begin{proof}
Let $y_k$ be a minimizing sequence of $y \mapsto \sum_{i=1}^n \omega_i d(y,x_i)^p$. Then \eqref{e:p-ucvx} yields
\begin{align*}
\sum_{i=1}^n \omega_i d(t y_k \oplus (1-t) y_l, x_i)^p \leq \sum_{i=1}^n \omega_i [
t d(y_k,x_i)^p + (1-t) d(y_l,x_i)^p- \frac{c}{2} t(1-t) d(y_k,y_l)^p
]
\end{align*}
rearranging with $t=\frac{1}{2}$ and yields 
\begin{align*}
\limsup_{k,l \rightarrow \infty} \frac{c}{8} d(y_k,y_l)^p &\leq \limsup_{k,l \rightarrow \infty} 
\frac{1}{2}\sum_{i=1}^n \omega_i d(y_k,x_i)^p
+\frac{1}{2}\sum_{i=1}^n \omega_i d(y_l,x_i)^p
-\sum_{i=1}^n \omega_i d(\frac{1}{2} y_k \oplus \frac{1}{2} y_l, x_i)^p \\
& \leq (\frac{1}{2}+\frac{1}{2}-1) \inf_y  \sum_{i=1}^n \omega_i d(y,x_i)^p =0.
\end{align*}
Hence $y_k$ is a Cauchy sequence and converges to a unique limit that is a minimizer of \eqref{eq:pcvxcomb}.
\end{proof}

\begin{mydef}[\cite{Kuwae2014}]
Let $(G,d)$ be a geodesic space. Let $\gamma$ and $\eta$ be two geodesics through $p$. Then $\gamma$ is said to be perpendicular to $\eta$ at point $p$ denoted by $\gamma \perp_p \eta$ if
\begin{align*}
d(x,p)\leq d(x,y) \quad \forall x\in \gamma, y \in \eta
\end{align*}
A space is said to be symmetric perpendicular if for all geodesics $\gamma$ and $\eta$ with common point $p$ we have
\begin{align*}
\gamma \perp_p \eta \Leftrightarrow \eta \perp_p \gamma.
\end{align*}
\end{mydef}

Examples for symmetric perpendicular spaces are $CAT(0)$ spaces and $CAT(\kappa)$ spaces for $\kappa>0$ with diameter strictly less than $\frac{\pi}{2 \sqrt{\kappa}}$ \cite[Theorem 2.11]{Kuwae2014}.

\begin{thm}
Let $(G,d)$ be a complete, p-uniformly convex space and 
for $i=1,2,\dots,n$ let the mappings $T_i:G\rightarrow G$ be  pointwise $\alpha$-firmly nonexpansive on 
$\Fix T_i$ with constant $\alpha_i$. Then for $\mathscr{T}$  defined by \eqref{eq:mathscrT}, 
$\cap_{i \in \{1,\ldots ,n\}}  \Fix T_i \subset \Fix \mathscr{T}$.  Suppose in addition  that 
$\cap_{i \in \{1,\ldots ,n\}} \Fix T_i \neq \emptyset$ and $G$ is symmetric perpendicular, then
$\Fix \mathscr{T}= \cap_{i \in \{1,\ldots ,n\}}  \Fix T_i $.
\end{thm}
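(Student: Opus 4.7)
The forward inclusion $\cap_i \Fix T_i \subseteq \Fix \mathscr{T}$ is immediate from \eqref{eq:mathscrT}: if $T_i x = x$ for every $i$, then $y \mapsto \sum_i \omega_i d(y, T_i x)^p$ collapses to $y \mapsto d(y, x)^p$, which is uniquely minimized at $x$.

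For the reverse inclusion, the plan is to fix $x \in \Fix \mathscr{T}$ and $z \in \cap_i \Fix T_i$ (nonempty by hypothesis) and to combine two inequalities. First, pointwise $\alpha$-firm nonexpansiveness of each $T_i$ at its fixed point $z$ gives, via Proposition \ref{t:properties pafne}\eqref{t:properties pafne iii},
\[
d(T_i x, z)^p \;\leq\; d(x, z)^p \;-\; \tfrac{(1-\alpha_i)c}{2\alpha_i}\, d(x, T_i x)^p,\qquad i=1,\dots,n.
\]
Second, the objective $F(y) := \sum_i \omega_i d(y, T_i x)^p$ is a weighted sum of $p$-uniformly convex functions, each with constant $c$ (by the remark following the definition of $p$-uniform convexity for functions), hence itself $p$-uniformly convex with constant $m=c\sum_i \omega_i = c$. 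Since $x$ minimizes $F$, Lemma \ref{minofstronglyconvexfunction} yields
\[
\sum_i \omega_i d(z, T_i x)^p \;\geq\; \sum_i \omega_i d(x, T_i x)^p \;+\; \tfrac{c}{2}\, d(x, z)^p.
\]
Weighting the first estimate by $\omega_i$, summing, and chaining with the second produces the master inequality
\[
\sum_i \omega_i \Bigl[\, 1 + \tfrac{(1-\alpha_i)c}{2\alpha_i}\, \Bigr]\, d(x, T_i x)^p \;\leq\; \Bigl[\, 1 - \tfrac{c}{2}\, \Bigr]\, d(x, z)^p.
\]

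In the CAT(0) regime $c=2$, the right-hand side above vanishes, forcing $d(x, T_i x) = 0$ for every $i$; in that case the reverse inclusion closes without invoking symmetric perpendicularity. The main obstacle is the general $p$-uniformly convex case with $c\in(0,2)$: there the right-hand side is a \emph{positive} multiple of $d(x, z)^p$, so the master inequality alone does not force $x \in \Fix T_i$. This is precisely where the symmetric perpendicular hypothesis is needed, and the role I expect it to play is to sharpen the use of Lemma \ref{minofstronglyconvexfunction}. Following Kuwae's analysis of $p$-barycenters in symmetric perpendicular spaces, the first-variation formula applied to $F$ at its barycenter $x$ should upgrade the variance bound to a Reshetnyak-type inequality in which the factor $c/2$ is replaced by a constant at least $1$, large enough to absorb the residual $1-c/2$ on the right. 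Substituting this sharpened inequality in the chaining argument turns the master estimate into $\sum_i \omega_i[\,\cdot\,] d(x, T_i x)^p \leq 0$, which again forces $T_i x = x$ for every $i$.

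The technical heart of the argument is therefore isolated at a single place: deriving from the symmetric perpendicularity axiom a variance-type inequality for the $p$-barycenter of $\{T_i x\}$ strong enough to swallow the $1-c/2$ deficit. This is the step I anticipate as most delicate, since it is the only place where the symmetric perpendicularity enters and is what distinguishes the CAT(0) case, where everything reduces to the elementary chaining above, from the general $p$-uniformly convex setting with $c<2$.
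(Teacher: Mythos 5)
Your forward inclusion is fine, and your chaining argument for the reverse inclusion is correct as far as it goes: combining Proposition \ref{t:properties pafne}\eqref{t:properties pafne iii} at $z\in\cap_i\Fix T_i$ with the variance inequality of Lemma \ref{minofstronglyconvexfunction} for the $p$-uniformly convex objective $F(y)=\sum_i\omega_i d(y,T_ix)^p$ does yield
$\sum_i\omega_i\bigl[1+\tfrac{(1-\alpha_i)c}{2\alpha_i}\bigr]d(x,T_ix)^p\leq\bigl(1-\tfrac{c}{2}\bigr)d(x,z)^p$,
which closes the argument when $c=2$. But that is only the CAT(0) case, and the theorem is stated for general $p$-uniformly convex spaces with $c\in(0,2)$. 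For $c<2$ you explicitly leave the decisive step unproven, and the mechanism you propose for filling it --- that symmetric perpendicularity should upgrade the second-order variance inequality by replacing the constant $c/2$ with a constant at least $1$ --- is not substantiated and is not how the hypothesis actually functions. Symmetric perpendicularity is a statement about metric projections onto geodesics, i.e., a \emph{first-order} property; there is no reason it should improve the modulus of convexity of $y\mapsto d(y,T_ix)^p$, and your own appeal to ``the first-variation formula'' is in tension with trying to extract a second-order (variance) estimate from it. As written, the proof has a genuine gap precisely at the point you flag as its technical heart.

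The paper avoids the variance inequality entirely and argues in the contrapositive, first-order style. Take $x\notin\cap_i\Fix T_i$ and $y\in\cap_i\Fix T_i$, and pick $j$ with $T_jx\neq x$. If the metric projection of $T_jx$ onto the geodesic $[x,y]$ were $x$ itself, then $[x,T_jx]\perp_x[x,y]$, and symmetric perpendicularity would give $[x,y]\perp_x[x,T_jx]$, hence $d(y,x)\leq d(y,T_jx)$; this contradicts the strict inequality $d(y,T_jx)^p\leq d(x,y)^p-\tfrac{1-\alpha_j}{\alpha_j}\tfrac{c}{2}d(T_jx,x)^p<d(y,x)^p$ supplied by pointwise $\alpha$-firmness at $y$. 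Therefore $P_{[x,y]}(T_jx)\neq x$, so the convex function $t\mapsto d(ty\oplus(1-t)x,T_jx)^p$ has strictly negative right derivative at $t=0$, while the terms with $T_ix=x$ contribute zero derivative. Summing with the weights $\omega_i$ shows $x$ cannot minimize $z\mapsto\sum_i\omega_i d(z,T_ix)^p$, i.e., $x\notin\Fix\mathscr{T}$. If you want to salvage your approach, you would need to prove the sharpened variance inequality you conjecture; short of that, you should switch to the first-order argument, which is where symmetric perpendicularity genuinely enters.
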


\begin{proof}
The inclusion $\Fix \mathscr{T}\supset \cap_{i =1}^n  \Fix T_i$ is clear. 
To see the converse inclusion when the intersection  
$\cap_{i \in \{1,\ldots ,n\}} \Fix T_i \neq \emptyset$ and $G$ is symmetric perpendicular, 
let $x \notin \cap_{i=1}^n  \Fix T_i$ and 
$y \in \cap_{i=1}^n \Fix T_i $. 
For at least one $j \in \{1,\ldots ,n\}$ we have $x \notin \Fix T_j$.  
We use a contradiction to prove $P_{[x,y]}(T_j x)\neq x$. Therefore assume that $P_{[x,y]}(T_j x)=x$. 
Then $[x,T_jx] \perp_x [x,y]$ and by symmetric perpendicularity $[x,y] \perp_x [x,T_jx] $. 
Hence $d(y,x) \leq d(y,T_j x)$ this contradicts $d^p(y,T_j x)\leq d^p(x,y)- \frac{1-\alpha_i}{\alpha_j} \frac{c}{2} d^p(T_jx,x) < d^p(y,x) $. Therefore $t=0$ is not a minimum of the convex function 
$t\mapsto g_j(t)\coloneqq d(\mathscr{T}x,ty \oplus (1-t)x)^p$ on the interval $[0,1]$ and the right side 
derivative $d^+ g_j(0) <0$ for all $j$ with $T_jx \neq x$.  For $i$ with $T_i x=x$ we have $g_i(t)=t^p$ and hence $d^+ g_i(0)=0$. 
So the function 
\begin{align*}
g(t)\coloneqq \sum_{i=1}^n \omega_i d(ty \oplus (1-t)x,T_i x)^p = \sum_{i=1}^n \omega_i g_i(t)
\end{align*}
has $d^+g(0) <0$, and hence $x$ can not be a minimum of
\begin{align*}
z\mapsto \sum_{i=1}^n \omega_i d(z,T_i x)^p. 
\end{align*}
This shows that  $\mathscr{T}x \neq x$ and completes the proof.
\end{proof}

\begin{thm}[averages of pointwise $\alpha$-firm mappings are pointwise $\alpha$-firm]\label{t:cvx comb pafne} 
Let $(G,d)$ be a $p$-uniformly convex space with constant $c>0$ that is symmetric perpendicular. 
Let $T_i$ be pointwise $\alpha$-firmly nonexpansive with constant $\alpha_i$ ($i=1,2,\dots, n$) 
at all points in $\cap_{i=1}^n \Fix T_i \neq \emptyset$ 
on $D$, 
 and $\omega_i\in [0,1]$ with $\sum_{i=1}^n \omega_i=1$.
Then $\mathscr{T}$ defined by \eqref{eq:mathscrT}  is pointwise $\alpha$-firmly nonexpansive 
at all $y\in \Fix \mathscr{T}$ on $D$ with
\begin{align*}
\alpha = \max_i \alpha_i
\end{align*}
\end{thm}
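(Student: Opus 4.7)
The plan is to leverage the $p$-uniform convexity of the weighted objective that defines $\mathscr{T}x$ together with the pointwise $\alpha$-firm nonexpansiveness of each $T_i$. Fix $y \in \Fix\mathscr{T}$; by the preceding theorem combined with the symmetric perpendicular hypothesis, $y \in \cap_{i=1}^n \Fix T_i$, so $T_i y = y$ for every $i$. Proposition \ref{t:properties pafne}\eqref{t:properties pafne iii} then reduces the firm nonexpansiveness hypothesis on each $T_i$ at $y$ to
\[
d(T_i x, y)^p \;\leq\; d(x,y)^p - \tfrac{1-\alpha_i}{\alpha_i}\tfrac{c}{2}\, d(T_i x, x)^p.
\]
Because the map $t \mapsto (1-t)/t$ is decreasing on $(0,1)$ and $\alpha := \max_i \alpha_i$, I may replace every $\alpha_i$ by $\alpha$ and then take the $\omega$-weighted sum to obtain an upper bound on $\sum_i \omega_i d(T_i x, y)^p$ by $d(x,y)^p - \tfrac{1-\alpha}{\alpha}\tfrac{c}{2}\sum_i \omega_i d(T_i x, x)^p$.

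Next I would introduce the auxiliary function $g(z) := \sum_i \omega_i d(z, T_i x)^p$. By the remark following the definition of $p$-uniformly convex functions, each summand $z \mapsto d(z, T_i x)^p$ is $p$-uniformly convex with constant $c$; since $\sum_i \omega_i = 1$, so is $g$, and by construction $\mathscr{T}x = \argmin g$. Lemma \ref{minofstronglyconvexfunction} applied at both $y$ and $x$ then gives
\[
g(y)-g(\mathscr{T}x) \;\geq\; \tfrac{c}{2}\, d(\mathscr{T}x, y)^p \quad\text{and}\quad g(x)-g(\mathscr{T}x) \;\geq\; \tfrac{c}{2}\, d(\mathscr{T}x, x)^p.
\]
Substituting the weighted sum bound above into $g(y)$, then using the second estimate to replace $g(x)=\sum_i \omega_i d(T_i x, x)^p$ by $g(\mathscr{T}x) + \tfrac{c}{2} d(\mathscr{T}x, x)^p$, the nonpositive terms involving $\sum_i \omega_i d(\mathscr{T}x, T_i x)^p$ drop, and after rearrangement I obtain
\[
\tfrac{c}{2}\, d(\mathscr{T}x, y)^p \;\leq\; d(x,y)^p - \tfrac{1-\alpha}{\alpha}\bigl(\tfrac{c}{2}\bigr)^2 d(\mathscr{T}x, x)^p.
\]
In the CAT(0) case $c=2$ this is precisely the characterization of $\alpha$-firm nonexpansiveness at $y$ in Proposition \ref{t:properties pafne}\eqref{t:properties pafne iii} with the claimed constant $\alpha = \max_i \alpha_i$.

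The main obstacle is the residual factor $\tfrac{c}{2}$ on the left-hand side of the Lemma \ref{minofstronglyconvexfunction} estimate at $y$: for $c<2$ the derivation above only gives pointwise $\alpha$-firm nonexpansiveness with a violation of at most $\epsilon \leq \tfrac{2}{c}-1$ in the sense of Definition \ref{d:pafne}. To recover the claim without any violation in the general $p$-uniformly convex regime, I would look for a sharper $p$-barycenter inequality of Jensen type, such as $d(\mathscr{T}x, y)^p \leq \sum_i \omega_i d(T_i x, y)^p$, or for a dedicated use of the symmetric perpendicular hypothesis (possibly via the four-point discrepancy $\Delta^{(p,c)}$ of \eqref{eq:4point p-uniform}) that couples $d(\mathscr{T}x, y)^p$ and $d(\mathscr{T}x, x)^p$ more tightly than the strong-convexity lower bound allows. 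This quantitative gap between what the $p$-uniform convexity lemma delivers and what the definition of $\alpha$-firm nonexpansiveness demands is where I expect the real work to lie.
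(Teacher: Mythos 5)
Your argument is sound as far as it goes and you have correctly located the crux, but it does not prove the theorem as stated: for a general $p$-uniformly convex space with constant $c<2$ your chain of estimates only yields
\[
d(\mathscr{T}x,y)^p \;\le\; \tfrac{2}{c}\,d(x,y)^p \;-\; \tfrac{1-\alpha}{\alpha}\tfrac{c}{2}\,d(\mathscr{T}x,x)^p ,
\]
i.e.\ pointwise \emph{almost} $\alpha$-firm nonexpansiveness with violation $\tfrac{2}{c}-1$, whereas the theorem claims the property with no violation. The loss occurs exactly where you flagged it: Lemma \ref{minofstronglyconvexfunction} applied to $g(z)=\sum_i\omega_i d(z,T_ix)^p$ at its minimizer $\mathscr{T}x$ returns only $\tfrac{c}{2}\,d(\mathscr{T}x,y)^p\le g(y)-g(\mathscr{T}x)$, and that factor $\tfrac{c}{2}$ cannot be removed by the strong-convexity route. (Your conclusion in the CAT(0) case $c=2$ is fine, and your reduction via $\Fix\mathscr{T}=\cap_i\Fix T_i$ and the replacement of each $\alpha_i$ by $\alpha=\max_i\alpha_i$ both match the paper.)

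The paper closes precisely the gap you identified by invoking Jensen's inequality for barycenters in symmetric perpendicular $p$-uniformly convex spaces \cite[Theorem 4.1]{Kuwae2014} --- this is where the symmetric perpendicular hypothesis is actually consumed in the proof. For the convex function $d(\cdot,y)^p$ it gives
\[
d\paren{{}_p\!\oplus_i\omega_i T_ix,\;y}^p \;\le\; {}_p\!\oplus_i\omega_i\, d(T_ix,y)^p
\;=\;\argmin_{t\in \Rbb}\sum_{i=1}^n\omega_i\,\abs{t-d(T_ix,y)^p}^p ,
\]
with no constant loss; note the right-hand side is the real $p$-barycenter of the values, not their $\omega$-weighted arithmetic mean (the two coincide only for $p=2$). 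One then uses that this real $p$-barycenter is monotone in each argument and commutes with translation to insert the pointwise $\alpha$-firmness of each $T_i$ (steps \eqref{eq:1estimate}--\eqref{eq:2estimate}), and a second application of Jensen with the convex function $d(x,\cdot)^p$ converts the resulting barycenter of the quantities $d(x,T_ix)^p$ back into $d(x,\mathscr{T}x)^p$. So your proposed fix --- a sharper Jensen-type barycenter inequality --- is exactly the right one; the missing ingredient is that this inequality is an external result of Kuwae valid under the symmetric perpendicular hypothesis, not something recoverable from \eqref{e:p-ucvx} and Lemma \ref{minofstronglyconvexfunction} alone, and its correct form involves the $p$-barycenter rather than the weighted sum.
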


\begin{proof}
Let $x \in D$.  By convexity of $d(\cdot,y)^p$ and Jensen's inequality \cite[Theorem 4.1]{Kuwae2014} for $p$-uniformly convex spaces with 
the symmetric perpendicular property we have
\begin{subequations}
\begin{align}
d(\mathscr{T}x,\mathscr{T}y)^p&=d(~ _p\!\!\oplus_i^n \omega_i T_i x,y)^p \\
&\leq ~ _p\!\!\oplus_i^n \omega_i d(T_i x,y)^p \\
&= \argmin_{t\in \Rbb} \sum_{i=1}^n \omega_i \vert t- d(T_ix,y)^p \vert^p \\
& \leq \argmin_{t\in \Rbb} \sum_{i=1}^n \omega_i \vert t -(d(x,y)^p-\frac{1-\alpha_i}{\alpha_i} \frac{c}{2} d(x,T_ix)^p)\vert^p \label{eq:1estimate} \\ 
& \leq \argmin_{t\in \Rbb} \sum_{i=1}^n \omega_i \vert t -(d(x,y)^p-\frac{1-\alpha}{\alpha}\frac{c}{2} d(x,T_ix)^p)\vert^p \label{eq:2estimate}\\
&= d(x,y)^p -\argmin_{t\in \Rbb} \sum_{i=1}^n \omega_i \vert t -\frac{1-\alpha}{\alpha} \frac{c}{2} d(x,T_ix)^p\vert^p \\
&\leq d(y,x)^p - \frac{1-\alpha}{\alpha} \frac{c}{2} d(x,~ _p\!\!\oplus_i^n \omega_i T_ix)^p \\
&=d(y,x)^p - \frac{1-\alpha}{\alpha} \frac{c}{2} d(x,\mathscr{T} x)^p
\end{align}
\end{subequations}
For the estimation in \eqref{eq:1estimate} and \eqref{eq:2estimate} we used the property that $\argmin_{t \in \Rbb} \sum_{i=1}^n \omega_i \vert t- \lambda_i\vert^p $ is increasing in every constant $\lambda_i$. This can be easily concluded since $\sum_{i=1}^n \omega_i \vert t- \lambda_i\vert^p $ is a convex function and 
\begin{align*}
\partial_t \sum_{i=1}^n \omega_i \vert t- \lambda_i\vert^p = \sum_{i=1}^n \omega_i p \vert t- \lambda_i\vert^{p-1} sgn(t-\lambda)
\end{align*} 
 is decreasing in every $\lambda_j$ for fixed $t$.
\end{proof}

\subsection{Constructing $\alpha$-firmly nonexpansive operators}

In a complete $p$-uniformly convex space the $p$-proximal mapping of a proper function lower semicontinuous $f$ is defined by
\begin{align}\label{e:prox^p}
\prox^p_{f, \lambda} (x) \coloneqq \argmin_{y \in G} f(y)+ \frac{1}{p \lambda^{p-1}} d(x,y)^p. 
\end{align}
The $\argmin$ in \eqref{e:prox^p} exists and is unique if $f$ is proper, lsc and convex 
\cite[Proposition 2.7]{Izuchukwu}.
This is a very natural definition of the proximal mapping, as the corresponding  Moreau-Yosida 
envelope given by
\begin{align*}
	e^p_{f,\lambda}(x) \coloneqq \inf_{y \in G} f(y) + \frac{1}{p \lambda^{p-1}} d(x,y)^p
\end{align*}
satisfies the semigroup property $e^p_{(e^p_{f,\lambda}),{\mu}}=e^p_{f, \lambda+\mu}$ 
(see \cite{Jost95} \cite{Kuwae2015}). 

\begin{proposition}[{\cite[Lemma 2.8]{Izuchukwu}}]\label{t:fne-type}
Let $(G,d)$ be a p-uniformly convex space with parameter $c> 0$, $\lambda>0$ and $f\colon G \rightarrow (-\infty, +\infty]$ be a proper, convex and lower semicontinuous function.  Then for all $x,y \in G$ we have
\begin{align*}
d(\prox^p_{f, \lambda}(x),\prox^p_{f, \lambda}(y))^p \leq \frac{1}{c} [d(v,y)^p+d(x,w)^p-d(x,v)^p-d(y,w)^p]=
\Delta_{\prox^p_{f, \lambda}}^{(p,\frac{4}{c})} (x,y)
\end{align*}
for $v=\prox^p_{f, \lambda}(x)$ and $w=\prox^p_{f, \lambda}(y)$.
\end{proposition}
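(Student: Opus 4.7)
The plan is to exploit the $p$-uniform convexity of the space to upgrade the proximal objective to a $p$-uniformly convex function, and then invoke Lemma~\ref{minofstronglyconvexfunction} twice to get a pair of inequalities that will combine to give the claim.

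First I would observe that, by the defining inequality \eqref{e:p-ucvx} of a $p$-uniformly convex space with constant $c$, the function $z \mapsto d(x,z)^p$ is $p$-uniformly convex with constant $m = c$ for every fixed $x \in G$ (this is the remark made in the paper after the definition of $p$-uniformly convex functions). Consequently, for each fixed $x$ the auxiliary functional
\[
F_x(z) \coloneqq f(z) + \frac{1}{p\lambda^{p-1}}\, d(x,z)^p
\]
is the sum of a convex function and a $p$-uniformly convex one, hence is itself $p$-uniformly convex with constant $m_\lambda \coloneqq c/(p\lambda^{p-1})$. By the definition of $\prox^p_{f,\lambda}$, the point $v = \prox^p_{f,\lambda}(x)$ is a minimizer of $F_x$, and analogously $w = \prox^p_{f,\lambda}(y)$ minimizes $F_y$.

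Next I would apply Lemma~\ref{minofstronglyconvexfunction} to each $F_x$ and $F_y$. Evaluating the inequality for $F_x$ (with minimizer $v$) at the test point $w$, and the inequality for $F_y$ (with minimizer $w$) at the test point $v$, yields
\begin{align*}
f(w) + \tfrac{1}{p\lambda^{p-1}}\, d(x,w)^p &\geq f(v) + \tfrac{1}{p\lambda^{p-1}}\, d(x,v)^p + \tfrac{m_\lambda}{2}\, d(v,w)^p,\\
f(v) + \tfrac{1}{p\lambda^{p-1}}\, d(y,v)^p &\geq f(w) + \tfrac{1}{p\lambda^{p-1}}\, d(y,w)^p + \tfrac{m_\lambda}{2}\, d(v,w)^p.
\end{align*}
Summing these two inequalities causes the $f$-values to cancel on both sides. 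After multiplying through by $p\lambda^{p-1}$, the coefficient $m_\lambda$ rescales to $c$, and rearranging gives
\[
c\, d(v,w)^p \;\leq\; d(x,w)^p + d(y,v)^p - d(x,v)^p - d(y,w)^p,
\]
which, divided by $c$, is precisely the asserted bound; the identification of the right-hand side with $\Delta^{(p,\,4/c)}_{\prox^p_{f,\lambda}}(x,y)$ follows from the definition \eqref{eq:4point p-uniform} since the prefactor $(4/c)/4 = 1/c$ absorbs exactly.

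The only nontrivial ingredient is the $p$-uniform convexity of $F_x$; once this is established, the rest is a symmetric two-point min-inequality and an algebraic cancellation. The mild obstacle is simply being careful that the constant $m$ arising from the quadratic-like term $\tfrac{1}{p\lambda^{p-1}} d(x,\cdot)^p$ really is $c/(p\lambda^{p-1})$, which follows directly from the scaling of the inequality in the definition of $p$-uniformly convex functions. No further geometric assumptions on $G$ (symmetric perpendicularity, etc.) are needed, only $p$-uniform convexity with constant $c$.
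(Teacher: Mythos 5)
Your proof is correct. Note, though, that the paper does not actually prove this proposition: it simply cites \cite[Lemma 2.8]{Izuchukwu} (with the parameter translation $\mu = p\lambda/2$), so your argument is a genuinely self-contained alternative built entirely from the paper's own toolkit. The route you take --- observing that $F_x(z)=f(z)+\tfrac{1}{p\lambda^{p-1}}d(x,z)^p$ is $p$-uniformly convex with constant $c/(p\lambda^{p-1})$, applying Lemma~\ref{minofstronglyconvexfunction} at the two minimizers $v$ and $w$ with the test points swapped, and summing so that the $f$-values cancel --- is the classical ``two-point strong minimality'' argument for firm nonexpansiveness of resolvents, and it is presumably what underlies the cited lemma; what it buys here is transparency about where the constant $1/c$ comes from and independence from the external reference. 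Two small points you should make explicit if this were written out: Lemma~\ref{minofstronglyconvexfunction} is stated for real-valued $f$, so you need the (routine) extension to proper extended-real-valued $p$-uniformly convex functions; and the cancellation of $f(v)+f(w)$ is legitimate because $v$ and $w$ minimize proper functionals, hence $f(v),f(w)\in\Rbb$. Neither is a gap, just bookkeeping. Your closing remark is also accurate: no symmetric perpendicularity or other geometric hypothesis enters, only the $p$-uniform convexity inequality \eqref{e:p-ucvx} (plus whatever guarantees that $\prox^p_{f,\lambda}$ is well defined, which the statement already presupposes).
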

\begin{proof}
This follows directly from \cite[Lemma 2.8]{Izuchukwu} with $\mu = \frac{p \lambda}{2}$.
\end{proof}

\begin{corollary}[proximal mappings are almost $\alpha$-firm]\label{t:prox aafne}
Let $(G,d)$ be a p-uniformly convex space with parameter $c\in (1,2]$, $\lambda>0$ 
and let $f\colon G \rightarrow (-\infty, +\infty]$ 
be a proper, convex, and lsc. Then $\prox^p_{f, \lambda}$ is almost $\alpha$-firmly nonexpansive with constant 
$\alpha_c = \frac{c-1}{c}$ and violation $\epsilon_c = \frac{2-c}{c-1}$.
\end{corollary}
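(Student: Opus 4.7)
The plan is to convert the inequality from Proposition~\ref{t:fne-type}, which uses the four-point term $\Delta^{(p,4/c)}_T$, into the $\psi^{(p,c)}_T$-form required by Definition~\ref{d:pafne} through a single algebraic identity. Writing Proposition~\ref{t:fne-type} in expanded form with $T = \prox^p_{f,\lambda}$ gives
\begin{equation*}
c\,d(Tx,Ty)^p \le d(Tx,y)^p + d(x,Ty)^p - d(x,Tx)^p - d(y,Ty)^p,
\end{equation*}
while the definition \eqref{eq:psi} rearranges to
\begin{equation*}
d(Tx,y)^p + d(x,Ty)^p - d(x,Tx)^p - d(y,Ty)^p = d(Tx,Ty)^p + d(x,y)^p - \tfrac{2}{c}\,\psi^{(p,c)}_T(x,y).
\end{equation*}
Substituting and solving for $d(Tx,Ty)^p$ yields the intermediate bound
\begin{equation*}
d(Tx,Ty)^p \le \tfrac{1}{c-1}\,d(x,y)^p - \tfrac{2}{c(c-1)}\,\psi^{(p,c)}_T(x,y).
\end{equation*}

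Matching coefficients against \eqref{eq:pafne}, the coefficient of $d(x,y)^p$ identifies $1+\epsilon_c = \tfrac{1}{c-1}$ and recovers $\epsilon_c = \tfrac{2-c}{c-1}$ as claimed. For $\alpha_c = \tfrac{c-1}{c}$ one has $\tfrac{1-\alpha_c}{\alpha_c} = \tfrac{1}{c-1}$, which is no larger than the coefficient $\tfrac{2}{c(c-1)}$ above throughout $c\in(1,2]$ (with equality at $c=2$). Hence, provided $\psi^{(p,c)}_T(x,y) \ge 0$, the coefficient of $\psi^{(p,c)}_T$ in the derived bound may be decreased from $\tfrac{2}{c(c-1)}$ to $\tfrac{1}{c-1}$ without breaking the inequality, delivering exactly \eqref{eq:pafne} with $(\alpha,\epsilon) = (\alpha_c,\epsilon_c)$.

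The main obstacle is therefore the nonnegativity of $\psi^{(p,c)}_T$ on $p$-uniformly convex spaces with $c \in (1,2)$. In the CAT(0) case $c=p=2$ the two coefficients above already coincide, so no weakening is needed and $\psi^{(2,2)}_T \ge 0$ is exactly Proposition~\ref{t:properties pafne}\eqref{t:properties pafne vi}(b). For $c \in (1,2)$ I would expect $\psi^{(p,c)}_T \ge 0$ to follow from a Reshetnyak-type four-point inequality in $p$-uniformly convex spaces, or directly from combining the variational inequality defining the proximal mapping with the $p$-uniform convexity of the squared distance; should this step prove delicate, the intermediate bound itself already establishes the stronger ``natural'' property with constant $\alpha_{\mathrm{nat}} = \tfrac{c(c-1)}{c(c-1)+2} \le \alpha_c$ and the same violation $\epsilon_c$, from which the stated $(\alpha_c,\epsilon_c)$ follow once $\psi \ge 0$ is in hand.
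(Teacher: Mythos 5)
Your algebra up to the intermediate bound $d(Tx,Ty)^p \le \tfrac{1}{c-1}\,d(x,y)^p - \tfrac{2}{c(c-1)}\,\psi^{(p,c)}_T(x,y)$ is correct, and the identity you use to convert the four-point quantity of Proposition~\ref{t:fne-type} into $\psi^{(p,c)}_T$ is essentially \eqref{e:psi-delta}. But the step you yourself flag as the ``main obstacle'' is a genuine gap, not a formality: to weaken the coefficient of $\psi^{(p,c)}_T$ from $\tfrac{2}{c(c-1)}$ to $\tfrac{1}{c-1}$ you need $\psi^{(p,c)}_T(x,y)\ge 0$ for \emph{arbitrary} $x,y$, and the paper proves this only for $p=c=2$ (Proposition~\ref{t:properties pafne}\eqref{t:properties pafne vi}(b)), where it rests on the Cauchy--Schwarz-type inequality \eqref{eq:CS} specific to CAT(0) spaces. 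No Reshetnyak-type substitute for $c\in(1,2)$ is supplied anywhere in the paper, so your argument as written does not establish the stated constant $\alpha_c$ globally; what it does establish unconditionally is the property with the different constant $\alpha_{\mathrm{nat}}=\tfrac{c(c-1)}{c(c-1)+2}$ and the same violation.

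The paper's proof avoids the issue entirely by working only at fixed points: it takes $x\in\Fix\prox^p_{f,\lambda}$ and $y$ arbitrary, so Proposition~\ref{t:fne-type} collapses to $(c-1)\,d(x,Ty)^p\le d(x,y)^p-d(y,Ty)^p$, and since $\psi^{(p,c)}_T$ evaluated at a fixed point equals $\tfrac{c}{2}d(Ty,y)^p\ge 0$ by Proposition~\ref{t:properties pafne}\eqref{t:properties pafne ii}, the identification of $\alpha_c$ and $\epsilon_c$ is immediate with no sign question (one only needs $\tfrac{1}{c-1}\ge\tfrac{1-\alpha_c}{\alpha_c}\cdot\tfrac{c}{2}$, i.e.\ $c\le 2$). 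This fixed-point reading is consistent with how the corollary is used and with the remark following it, which speaks of the \emph{quasi} $\alpha$-firm property. If you restrict your derivation to reference points in $\Fix\prox^p_{f,\lambda}$, your gap disappears and your proof reduces to the paper's; if you want the global statement with $\alpha_c$, you must either prove $\psi^{(p,c)}_T\ge 0$ in this setting or settle for $\alpha_{\mathrm{nat}}$.
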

\begin{proof}
Let $x \in \Fix \prox^p_{f, \lambda}$, $y \in G$ and $w = \prox^p_{f, \lambda}(y)$. 
Then by Proposition \ref{t:fne-type} and elementary calculations
\begin{align*}
d(x,w)^p \leq \frac{1}{c-1} (d(x,y)^p-d(y,w)^p)=(1+\epsilon_c) d(x,y)^p - \frac{1-\alpha_c}{\alpha_c } d(y,w)^p.
\end{align*}
\end{proof}

\begin{remark}
In the special case $c=2$ and hence $p=2$ the violation is $\epsilon_2=0$ and $\prox^2_{f, \lambda}$ is quasi $\alpha$-firm with constant $\alpha=\frac{1}{2}$. 
\end{remark}

\begin{proposition}[projectors are pointwise firmly nonexpansive]\label{t:projectors}
Let $(G,d)$ be a complete, symmetric perpendicular $p$-uniformly convex space, $C\subset G$ 
a convex subset. The metric projection onto the set $C$, denoted $P_C$, is pointwise 
$\alpha$-firmly nonexpansive at any $y\in C$ with constant $\alpha=\frac{1}{2}$.
\end{proposition}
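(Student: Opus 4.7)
My plan is as follows. Fix $x \in G$ and $y \in C$ and set $v = P_C(x)$; since $y \in C \subset \Fix P_C$, Proposition \ref{t:properties pafne}\eqref{t:properties pafne iii} tells me that proving the Pythagoras-like inequality
\begin{equation*}
d(v,y)^p + \tfrac{c}{2}\, d(v,x)^p \;\leq\; d(x,y)^p
\end{equation*}
is exactly what is needed to establish pointwise $\alpha$-firm nonexpansiveness at $y$ with $\alpha=1/2$ and zero violation. Existence and uniqueness of $v$ will be taken as implicit: both follow from completeness and the $p$-uniform convex minimization argument of \cite[Lemma 3.5]{Kuwae2014} applied to $z \mapsto d(x,z)^p$ on (the closure of) $C$.

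The first step is to show that the projection of $x$ is also a nearest point in $C$ to every intermediate point on the geodesic $[v,x]$. For every $a = (1-s)v \oplus s x$ with $s \in [0,1]$ and every $z \in C$, the triangle inequality and the defining inequality $d(x,v) \leq d(x,z)$ give
\begin{equation*}
d(a,z) \;\geq\; d(x,z) - d(x,a) \;\geq\; d(x,v) - (1-s)\,d(x,v) \;=\; s\, d(x,v) \;=\; d(a,v),
\end{equation*}
so $v$ is a nearest point in $C$ to $a$. By convexity of $C$, the entire geodesic $[v,z]$ lies in $C$, whence $d(a,v) \leq d(a,b)$ for every $b \in [v,z]$. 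By definition this is precisely the perpendicularity $[v,x] \perp_v [v,z]$.

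Next I would invoke the symmetric perpendicular hypothesis to obtain the reverse relation $[v,z] \perp_v [v,x]$. Specialized to $z = y$ and to $a_s = (1-s)v \oplus s x \in [v,x]$, this reads $d(y,v) \leq d(y, a_s)$ for every $s \in [0,1]$. Combining this with the $p$-uniform convexity bound
\begin{equation*}
d(y, a_s)^p \;\leq\; (1-s)\, d(y,v)^p + s\, d(y,x)^p - \tfrac{c}{2}\, s(1-s)\, d(v,x)^p,
\end{equation*}
cancelling the $d(y,v)^p$ terms, dividing by $s>0$, and letting $s \to 0^+$ yields the required bound.

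The main technical point is the perpendicularity bookkeeping: the definition of $\perp_v$ is a condition on \emph{all} pairs of points on the two geodesics, so one must take care to verify that the sliding argument above really supplies this for $[v,x] \perp_v [v,z]$ (not just for endpoint pairs), and that the symmetric perpendicularity hypothesis can be applied in precisely the form needed. Once this is in place, the rest is a routine manipulation of \eqref{e:p-ucvx}.
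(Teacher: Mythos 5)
Your proposal is correct and follows essentially the same route as the paper's proof: establish $[x,P_Cx]\perp_{P_Cx}[y,P_Cx]$ from the nearest-point property, flip it by symmetric perpendicularity, and combine the resulting minimality at the endpoint with \eqref{e:p-ucvx} via a one-sided limit (the paper phrases this as a derivative at $t=0$). Your sliding argument merely makes explicit the perpendicularity claim that the paper asserts with "since $P_C$ is the metric projector," and your reduction via Proposition \ref{t:properties pafne}\eqref{t:properties pafne iii} matches the target inequality the paper derives directly.
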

\begin{proof}
First note that $[x,P_Cx] \perp_{P_Cx} [y,P_Cx]$ since $P_C$ is the metric projector. 
Then  $[y,P_Cx] \perp_{P_Cx} [x,P_Cx]$ by symmetric perpendicularity of the space. 
Hence $t=0$ is a minimum of the function $t \mapsto d(tx \oplus (1-t) P_Cx,y)^p$ on the interval $[0,1]$ and
\begin{align*}
d(tx \oplus (1-t) P_Cx,y)^p \leq t d(x,y)^p + (1-t) d(P_Cx,y)^p - \frac{c}{2} t (1-t) d(x,P_Cx)^p,
\end{align*}
with equality at $t=0$. Now $t=0$ has to be a minimum of the right hand side and
\begin{align*}
0&\leq  ~\dv{}{t} \Big\vert_{t=0} t d(x,y)^p + (1-t) d(P_Cx,y)^p - 
\frac{c}{2} t (1-t) d(x,P_Cx)^p\\
&=d(x,y)^p-d(P_Cx,y)^p-\frac{c}{2}d(x,P_Cx)^p,
\end{align*}
which yields the claim.
\end{proof}

\begin{proposition}[Krasnoselsky-Mann relaxations]
Let $(G,d)$ be a $p$-uniformly convex space and $T\colon G \rightarrow G$ 
be pointwise nonexpansive at all $y \in \Fix T$. Then 
$T_\lambda \coloneqq \lambda T \oplus (1-\lambda) Id$ is pointwise $\alpha$-firmly 
nonexpansive at all $y \in \Fix T$ with constant 
$\alpha= \frac{\lambda^{p-1}}{1-\lambda+\lambda^{p-1}}$.
\end{proposition}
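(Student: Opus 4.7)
The plan is to reduce the statement to the characterization provided by Proposition \ref{t:properties pafne}\eqref{t:properties pafne iii}. First I would observe that $\Fix T \subseteq \Fix T_\lambda$: for any $y$ with $Ty=y$ we have $T_\lambda y = \lambda y \oplus (1-\lambda) y = y$. Hence it suffices to verify, for any fixed $y\in\Fix T$ and arbitrary $x\in G$, the inequality
\begin{equation*}
    d(T_\lambda x, y)^p \leq d(x,y)^p - \tfrac{1-\alpha}{\alpha}\tfrac{c}{2}\,d(x, T_\lambda x)^p,
\end{equation*}
with $\alpha = \lambda^{p-1}/(1-\lambda+\lambda^{p-1})$.

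The first main step is to apply the defining inequality \eqref{e:p-ucvx} of $p$-uniform convexity to $T_\lambda x = (1-\lambda)x \oplus \lambda Tx$, taking $y$ as the base point. This yields
\begin{equation*}
    d(T_\lambda x, y)^p \leq (1-\lambda) d(x,y)^p + \lambda d(Tx, y)^p - \tfrac{c}{2}\lambda(1-\lambda)\,d(x, Tx)^p.
\end{equation*}
The second step is to invoke pointwise nonexpansiveness of $T$ at $y\in\Fix T$ to bound $d(Tx,y)^p \leq d(x,y)^p$. The two convex-combination terms then collapse to $d(x,y)^p$, leaving a residual of the form $-\tfrac{c}{2}\lambda(1-\lambda)d(x,Tx)^p$.

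The third step is a geometric identity: since $T_\lambda x$ lies on the geodesic from $x$ to $Tx$ at parameter $\lambda$, the definition of the notation $\oplus$ gives $d(x,T_\lambda x) = \lambda\,d(x,Tx)$, so $d(x,Tx)^p = \lambda^{-p}\,d(x,T_\lambda x)^p$. Substituting converts the residual into $-\tfrac{c}{2}\tfrac{1-\lambda}{\lambda^{p-1}}d(x,T_\lambda x)^p$. Matching the coefficient with $\tfrac{c}{2}\tfrac{1-\alpha}{\alpha}$ and solving $\tfrac{1-\alpha}{\alpha} = \tfrac{1-\lambda}{\lambda^{p-1}}$ gives precisely the claimed value of $\alpha$.

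I do not expect any serious obstacle here; the argument is a direct chain of three ingredients (the $p$-uniform convexity inequality, pointwise nonexpansiveness of $T$, and the geodesic parameterization). The only point requiring care is the correct bookkeeping of the weights in the notation $(1-t)x \oplus ty$, so that the parameter $\lambda$ is associated with the correct endpoint and the identity $d(x,T_\lambda x) = \lambda d(x,Tx)$ is applied with the right exponent when passing to $d(\cdot,\cdot)^p$.
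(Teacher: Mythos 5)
Your proposal is correct and follows essentially the same route as the paper: apply the $p$-uniform convexity inequality \eqref{e:p-ucvx} to $T_\lambda x=(1-\lambda)x\oplus\lambda Tx$ with base point $y$, use pointwise nonexpansiveness of $T$ at $y$, substitute $d(x,T_\lambda x)=\lambda d(x,Tx)$, and solve $\tfrac{1-\lambda}{\lambda^{p-1}}=\tfrac{1-\alpha}{\alpha}$. The only cosmetic difference is that the paper notes the full equality $\Fix T=\Fix T_\lambda$, whereas you use only the inclusion $\Fix T\subseteq\Fix T_\lambda$, which is all the reduction to Proposition \ref{t:properties pafne}\eqref{t:properties pafne iii} requires.
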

\begin{proof}
Clearly $\Fix T = \Fix T_\lambda$ and $d(x,T_\lambda x)^p=\lambda^p d(x,Tx)^p$. 
Let $y \in Fix T_\lambda$ then 
\begin{align*}
d(y,T_\lambda x)^p &=d(y, \lambda Tx \oplus (1-\lambda)x)^p \\
&\leq \lambda d(y,Tx)^p+(1-\lambda) d(y,x)^p- \frac{c}{2} \lambda (1-\lambda) d(x,Tx)^p \\
& \leq d(x,y)^p - \frac{c}{2} \frac{1-\lambda}{\lambda^{p-1}} d(x,T_\lambda x)^p.
\end{align*}
Solving $\frac{1-\lambda}{\lambda^{p-1}}=\frac{1-\alpha}{\alpha}$ for $\alpha$ yields the claim.
\end{proof}

\section{Convergence of Iterated $\alpha$-Firmly Nonexpansive Mappings}
\label{s:convergence}
The  {\em asymptotic center} \cite{Ede72} of a bounded sequence 
$(x_k)_{k\in\Nbb}$ in a metric space $(G, d)$ is the set 
\begin{equation}\label{e:asymp center}
 A((x_k)_{k\in\Nbb})\equiv \set{x\in G}{\limsup_{k\to\infty}d(x,x_k) 
 = r((x_k)_{k\in\Nbb})}
\end{equation}
where 
\begin{equation}\label{e:asymp radius}
 r((x_k)_{k\in\Nbb})\equiv \inf\set{\limsup_{k\to\infty}d(y, x_k)}{y\in G}.
\end{equation}
Following \cite{Lim76}, a sequence $(x_k)_{k\in\Nbb}$ is said to 
$\Delta$-converge to $\xbar\in G$ whenever $\xbar$ 
is the unique asymptotic center of every subsequence of $(x_k)_{k\in\Nbb}$. 
In this case $\xbar$ is said to be the $\Delta$-limit of the sequence and we 
write $x_k\overset{\Delta}{\to}\xbar$.

\subsection{Convergence - No Rate}
The next theorem is a slight, but important generalization of analogous results 
that can be found elsewhere in the literature.  
There are two main differences:  namely,  that only 
{\em quasi} $\alpha$-firm nonexpansiveness is required, and secondly, 
nonexpansiveness is only required at the asymptotic centers of all subsequences.  
Our proof is nearly identical to the proof of   
\cite[Theorem 4.1]{RuiLopNic15}, but the stronger assumptions of the theorem 
of that work obscures the relationship between pointwise nonexpansiveness at
asymptotic centers and $\Delta$-convergence.  In both \cite{AriLeuLop14} and 
\cite{RuiLopNic15}, $\alpha$-firm nonexpansiveness implies 
nonexpansiveness, which is not the case here.  Moreover, in general it would 
be far too restrictive to require $\alpha$-firm nonexpansiveness everywhere when 
the property is really only required at its fixed points where there is still hope that 
the property enjoys a reasonable calculus.  
\begin{thm}\label{t:ne at asymp centers}
 Let $(G, d)$ be a $p$-uniformly convex space, let $D\subseteq G$ be convex, 
 and let $T: G\to G$ with $T(D)\subseteq D$ be pointwise $\alpha$-firmly 
 nonexpansive at all $y\in\Fix T\cap D$ on $D$.  Define the sequence 
 $(x_k)_{k\in \Nbb}$ by $x_{k+1}=Tx_k$ with $x_0\in D$.  If this sequence  
 is pointwise nonexpansive at the 
 asymptotic centers of all subsquences  on $D$, then the asymptotic centers
 of all subsequences coincide at a single $\xbar\in \Fix T\cap D$ and 
 $x_k\overset{\Delta}{\to}\xbar$.  
In particular, if $T$ is nonexpansive on $D$, then  
every fixed point sequence initialized in $D$ $\Delta$-converges to a point 
in $\Fix T$.
If, in addition, $T(D)$ is a boundedly compact subset of $G$,   then 
$x_k \to\xbar\in\Fix T$.   
\end{thm}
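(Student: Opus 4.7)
The plan is to mimic the classical weak-convergence argument for averaged-map iterates in Hilbert space, with three adaptations forced by the metric setting: Fejér monotonicity replaces the nonexpansiveness of $\norm{\cdot-y}^2$, uniqueness of asymptotic centers replaces Opial's property, and $\Delta$-convergence replaces weak convergence. First I would fix any $y\in\Fix T\cap D$, which must be nonempty for the conclusion to be non-vacuous. By Proposition~\ref{t:properties pafne}\eqref{t:properties pafne iv}, pointwise $\alpha$-firm nonexpansiveness of $T$ at $y$ on $D$ yields pointwise nonexpansiveness there, so $d(x_{k+1},y)\le d(x_k,y)$. Hence $(d(x_k,y))_{k\in\Nbb}$ decreases to a limit $\ell_y\in[0,\infty)$, the sequence is bounded, every subsequence $(x_{k_n})$ has a finite asymptotic radius, and Theorem~\ref{t:RLN THm 3.1} supplies asymptotic regularity $d(x_k,x_{k+1})\to 0$.

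Second, I would verify that in a ($p$-uniformly convex, complete) space, asymptotic centers of bounded sequences are unique singletons. The functional $\phi(y)\coloneqq\limsup_{n\to\infty}d(y,x_{k_n})^p$ inherits from \eqref{e:p-ucvx} the inequality
\begin{equation*}
\phi\paren{(1-t)y_1\oplus t y_2}\le (1-t)\phi(y_1)+t\phi(y_2)-\tfrac{c}{2}t(1-t)d(y_1,y_2)^p,
\end{equation*}
so any two minimizers $c,c'$ must coincide; existence then follows by a standard coercivity and Cauchy-sequence argument along any minimizing sequence, mimicking the proof of the p-barycenter cited after \eqref{eq:pcvxcomb}.

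The core step is to show that the unique asymptotic center $c$ of any subsequence $(x_{k_n})$ lies in $\Fix T\cap D$. By hypothesis $T$ is pointwise nonexpansive at $c$ on $D$, so
\begin{equation*}
d(x_{k_n+1},Tc)=d(Tx_{k_n},Tc)\le d(x_{k_n},c),
\end{equation*}
and by asymptotic regularity $d(x_{k_n},Tc)\le d(x_{k_n},x_{k_n+1})+d(x_{k_n+1},Tc)$, whose $\limsup$ is at most $\limsup_n d(x_{k_n},c)=r((x_{k_n}))$. Thus $Tc$ also attains the asymptotic radius, and uniqueness forces $Tc=c$. Showing $c\in D$ reduces to the standard fact that asymptotic centers of bounded sequences in closed convex subsets of a complete $p$-uniformly convex space lie in the subset; this is where I expect the main technical overhead, and it likely forces the hypothesis on $D$ to be tacitly strengthened to ``closed and convex''.

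To pin down a single $\Delta$-limit, take two subsequences with asymptotic centers $c,c'$; both lie in $\Fix T\cap D$, so $\ell_c\coloneqq\lim_k d(x_k,c)$ and $\ell_{c'}\coloneqq\lim_k d(x_k,c')$ exist. Along the subsequence defining $c$, $\ell_c=r((x_{k_n}))\le\limsup_n d(x_{k_n},c')=\ell_{c'}$, and symmetry gives equality; but then $c'$ also minimizes $\phi$ along that subsequence, so uniqueness of the asymptotic center yields $c=c'$. Thus $(x_k)\overset{\Delta}{\to}\xbar$ for a single $\xbar\in\Fix T\cap D$, proving the first two assertions (the second being immediate since nonexpansiveness on $D$ implies pointwise nonexpansiveness everywhere on $D$). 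For the final assertion, bounded compactness of $T(D)$ supplies a subsequence $x_{k_n}\to x^*$ strongly, hence $\Delta$-convergently, so $x^*=\xbar$; Fejér monotonicity $d(x_{k+1},\xbar)\le d(x_k,\xbar)$ together with $d(x_{k_n},\xbar)\to 0$ then forces $d(x_k,\xbar)\to 0$.
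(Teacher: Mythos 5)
Your proposal is correct and follows essentially the same route as the paper: Fej\'er monotonicity plus asymptotic regularity from Theorem~\ref{t:RLN THm 3.1}, uniqueness of asymptotic centers in the $p$-uniformly convex setting, the observation that pointwise nonexpansiveness at an asymptotic center combined with $d(Tx_k,x_k)\to 0$ forces that center to be a fixed point, and a comparison of asymptotic radii to show all subsequence centers coincide --- the paper merely outsources the uniqueness of asymptotic centers to \cite{Leu10} and the fixed-point step to \cite[Lemma 2.11]{AriLeuLop14}, both of which you re-derive. Your remark that $D$ should be closed (so that asymptotic centers of sequences in $D$ actually lie in $D$) is well taken; the paper's proof tacitly relies on this as well.
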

\begin{proof}
Let $\Ncal$ denote any infinite subset of $\Nbb$ and 
 consider the corresponding subsequence $(x_k)_{k\in\Ncal}$.  
 This subsequence is bounded since $T$ is a self-mapping on $D$ and 
 pointwise $\alpha$-firmly nonexpansive -- and hence by 
 Proposition \ref{t:properties pafne}\eqref{t:properties pafne iv}
 nonexpansive -- at all $y\in\Fix T\cap D$ on $D$.  Since $D$ is convex, this subsequence
therefore possesses a unique asymptotic center \cite{Leu10}, 
 which we denote by $\xbar_\Ncal$.  
Since $T$ is pointwise nonexpansive at $\xbar_\Ncal$ on $D$, we have
\begin{eqnarray*}
 \forall k\in\Ncal\qquad  d(T\xbar_\Ncal, x_{k})&\leq& d(T\xbar_\Ncal, Tx_k)+ d(Tx_k, x_k)\\
 &\leq & d(\xbar_\Ncal, x_k)+ d(Tx_k, x_k).
\end{eqnarray*}
Again, since $T$ is pointwise $\alpha$-firmly nonexpansive at all $y\in \Fix T\cap D$, 
by Theorem \ref{t:RLN THm 3.1} we have
$d(Tx_k, x_k)\to 0$ as $k\to\infty$.  
Therefore by \cite[Lemma 2.11]{AriLeuLop14} 
(see also \cite{Leu10}),  this implies that $T\xbar_\Ncal = \xbar_\Ncal$, that is, 
$\xbar_\Ncal\in\Fix T$.

Denote the unique asymptotic center of the entire sequence $(x_k)_{k\in\Nbb}$ by $\xbar$.  
Then
\begin{eqnarray*}
	\limsup_{k\underset{\Ncal}{\to}\infty}d(x_k, \xbar_\Ncal)&\leq& 
	\limsup_{k\underset{\Ncal}{\to}\infty}d(x_k, \xbar)\\
	&\leq& \limsup_{k\underset{\Nbb}{\to}\infty}d(x_k, \xbar)\\
	&\leq& \limsup_{k\underset{\Nbb}{\to}\infty}d(x_k, \xbar_\Ncal)\\
	&=&  \lim_{k\underset{\Nbb}{\to}\infty}d(x_k, \xbar_\Ncal)
	=  \lim_{k\underset{\Ncal}{\to}\infty}d(x_k, \xbar_\Ncal), 
\end{eqnarray*}
where the first equality follows from the fact that the sequence of distances is 
monotone decreasing and bounded below.  Therefore $\xbar_\Ncal=\xbar$.  
Since $\Ncal$ was an arbitrary infinite subset
of $\Nbb$, this establishes $\Delta$-convergence of $(x_k)$. 

To see strong convergence when $T(D)$ is boundedly compact, since $(x_k)_{k\in\Nbb}$
is a bounded sequence in $T(D)$, it has a convergent subsequence with 
limit $\xbar$.  Whenever $(d(x_k, \xbar))_{k\in\Nbb}$ converges, we can conclude 
that $x_k\to \xbar$. 
\end{proof}

\subsection{Quantitative Convergence - Error Bounds}
\label{s:convergence rate}
Our analysis of the convergence of fixed point iterations follows the same pattern developed in 
\cite{LukTamTha18, LukTebTha18, HerLukStu19a}.  In addition to pointwise 
$\alpha$-firm nonexpansiveness developed
above, we use the notion of {\em gauge monotonicity} of sequences and 
{\em metric subregularity}.
What we are calling gauge monotone sequences were first introduced 
 in \cite{LukTebTha18} where they are called
 $\mu$-monotone.  
 Recall that $\rho:[0,\infty) \to [0,\infty)$ is a \textit{gauge function} if 
$\rho$ is continuous, strictly increasing 
with $\rho(0)=0$, and $\lim_{t\to \infty}\rho(t)=\infty$. 

 \begin{mydef}[gauge monotonicity \cite{LukTebTha18}]\label{d:mu_Mon}
Let $(G,d)$ be a metric space, let $(x_k)_{k\in\Nbb}$ be a sequence on $G$, 
let $D\subset G$ be nonempty and let the continuous mapping
$\mymap{\mu}{\Rbb_+}{\Rbb_+}$ satisfy $\mu(0)=0$ and
\begin{align*}
&\mu(t_1)<\mu(t_2)\leq t_2\; \mbox{ whenever }\; 0\le t_1<t_2.
\end{align*}
\begin{enumerate}[(i)]
 \item $(x_k)_{k\in \Nbb}$ is said to be
 \emph{gauge monotone with respect to $D$ with rate $\mu$}  whenever
 \begin{equation}\label{e:mu-uniform mon}
 d(x_{k+1}, D)\leq \mu\paren{d(x_k, D)}\quad  \forall k\in \Nbb .
 \end{equation}
 \item $(x_k)_{k\in \Nbb}$ is said to be
 \emph{linearly monotone with respect to $D$} with rate $c$ if \eqref{e:mu-uniform mon} is
 satisfied for $\mu(t)=c\cdot t$ for all $t\in \Rbb_+$ and some constant $c\in [0,1]$.
 \end{enumerate}
A  sequence $(x_k)_{k\in \Nbb}$ is said to converge 
{\em gauge monotonically} 
to some element  $x^*\in G$ with rate 
$s_k(t)\equiv \sum_{j=k}^\infty \mu^{(j)}(t)$ whenever 
it is gauge monotone with gauge $\mu$ satisfying 
$\sum_{j=1}^\infty\mu^{(j)}(t)<\infty~\forall t\geq 0$, 
and 
there exists a constant 
$a>0$  such that $d(x_k,x^*)\leq  a s_k(t)$ for all $k\in \mathbb{N}$. 
\end{mydef}
All Fej\'er monotone sequences \cite{BauCom17} are linearly monotone (with constant $c=1$)
but the converse does not hold (see Proposition 1  and Example 1 of \cite{LukTebTha18}).
Gauge-monotonic convergence for a linear gauge in the definition above is just 
$R$-linear convegence.

The definition of metric subregularity below is modeled mainly after \cite[Definition 2.1b)]{Ioffe11} and  
\cite[Definition 1 b)]{Ioffe13}.  
\begin{mydef}[metric regularity on a set]\label{d:(str)metric (sub)reg}
$~$ Let $(G_1, d_1)$ and $(G_2,d_2)$ be metric spaces and let $\mymap{\Tcal}{G_1}{G_2}$, 
$U_1\subset G_1$, $U_2\subset G_2$.
For $\Lambda\subset G_1$, the mapping $\Tcal$ is called \emph{metrically regular on 
$U_1\times U_2$ relative to $\Lambda$ with gauge $\rho$} whenever
\begin{equation}\label{e:metricregularity}
d_1\paren{x, \Tcal^{-1}(y)\cap\Lambda}\leq \rho( d_2\paren{y, \Tcal(x)})
\end{equation}
holds for all $x\in U_1\cap\Lambda$ and $y\in U_2$ with $0<\rho(d_2\paren{y,\Tcal(x)})$ 
where $\Tcal^{-1}(y)\equiv \set{z}{\Tcal(z)=y}$.
When the set $U_2$ consists of a single point, $U_2=\{\bar y\}$, then $\Tcal$ is said 
to be 
\emph{metrically subregular for $\bar y$ on $U_1$ relative to $\Lambda$ with gauge $\rho$}.
\end{mydef}

The usual definition of metric subregularity is in the case where the 
gauge is just a linear function: $\rho(t)=\kappa t$.  The ``relative to'' part of the definition 
is also not common in the literature, but allows one to isolate the regularity 
to subsets (mostly manifolds) where the iterates of algorithms are naturally confined.  
See \cite[Example 1.8]{ACL16} for a concrete example.  
 In \cite[Example 3.9]{KohLopNic19} this is placed in a context of 
the {\em modulus of regularity} of a mapping with respect to its zeros. 
For our purposes, the easiest way to understand metric subregularity is as 
one-sided Lipschitz continuity of the (set-valued) inverse mapping $\Tcal^{-1}$.  
We will refer to the case when the gauge is linear to {\em linear metric subregularity}.

We construct $\rho$ implicitly from another 
nonnegative function $\mymap{\theta}{[0,\infty)}{[0,\infty)}$ satisfying
\begin{eqnarray}\label{eq:theta}
(i)~ \theta(0)=0; \quad (ii)~ 0<\theta(t)<t ~\forall t>0; 
\quad (iii)~\sum_{j=1}^\infty\theta^{(j)}(t)<\infty~\forall t\geq 0.
\end{eqnarray}
The gauge we will use satisfies 
\begin{equation}\label{eq:gauge}
 \rho\paren{\paren{\frac{t^p-\paren{\theta(t)}^p}{\tau}}^{1/p}}=
 t\quad\iff\quad
 \theta(t) = \paren{t^p - \tau\paren{\rho^{-1}(t)}^p}^{1/p}
\end{equation}
for $\tau>0$ fixed and $\theta$ satisfying \eqref{eq:theta}.  

In the case of linear metric subregularity on a $2$-uniformly convex space (think Hilbert space)
we have 
\[
\rho(t)=\kappa t\quad\iff\quad  
\theta(t)=\paren{1-\frac{\tau}{\kappa^2}}^{1/2}t\quad (\kappa\geq \sqrt{\tau}).
\]  
The condition $\kappa\geq \sqrt{\tau}$ is spurious since, if 
\eqref{e:metricregularity} is satisfied for some $\kappa'>0$, then it is satisfied
for all $\kappa\geq \kappa'$. 

From the transport discrepancy $\psi^{(p,c)}_{T}$ defined in \eqref{eq:psi}  
and a subset $S\subset G$ we construct
the following surrogate mapping 
$\mymap{\Tcal_S}{G}{\mathbb{R}_+}\cup\{+\infty\}$ 
 by 
 \begin{equation}\label{eq:Tcal}
	 \Tcal_S(x)\equiv \left(\tfrac{2}{c}\inf_{y\in S}\psi^{(p,c)}_{T}(x,y) \right)^{1/p}.
 \end{equation}
 If $S=\emptyset$ then, by definition, $\Tcal_S(x)\equiv+\infty$ for all $x$.  
 When $S\subseteq\Fix T$, then by Proposition \ref{t:properties pafne}\eqref{t:properties pafne ii} 
 \begin{equation}\label{eq:Tcal_Fix_T}
	 \Tcal_S(x)= \sqrt[p]{\tfrac{2}{c}}d(Tx,x)>0\quad (S\neq\emptyset).  
 \end{equation}
 Hence, this function is proper (finite at least at one point, and does not take the value $-\infty$) 
 when  $S\subseteq\Fix T$ is nonempty.   
This can be interpreted as the pointwise transport discrepancy relative to the fixed points and 
will be used to characterize the regularity of the mapping $T$ at fixed points.  

\begin{thm}[quantitative convergence]\label{t:msr convergence}
 Let $(G, d)$ be a 
 $p$-uniformly convex space, let $D\subset G$, 
 let $T:G\to G$ with $T(D)\subseteq D$ boundedly compact,  
 and let $S\equiv \Fix T\cap D$ be nonempty. 
Assume 
 \begin{enumerate}[(i)]
 \item\label{t:msr convergence a} $T$ is pointwise $\alpha$-firmly nonexpansive at all 
 points $y\in S$ 
 with the same constant $\alphabar$ on $D$;
 \item\label{t:msr convergence b} $\Tcal_S$ defined by \eqref{eq:Tcal} is metrically subregular for 
 $0$ relative to $D$ on $D$ with gauge $\rho$ given by \eqref{eq:gauge} for 
 $\tau=c(1-\alphabar)/(2\alphabar)$, that is, 
 \begin{equation}
 \label{eq:estimate1}
 d(x,\Fix T\cap D)\leq \rho(d(Tx,x)),\hspace{0.2cm}\forall x\in D.
 \end{equation}
 \end{enumerate}
Then for any $x_0\in D$, the sequence $(x_k)_{k\in\Nbb}$ defined by 
$x_{k+1}\equiv T x_k$ satisfies 
\begin{equation}\label{eq:gauge convergence}
d\paren{x_{k+1},\Fix T\cap D}
\leq \theta\paren{d\paren{x_k,\Fix T\cap D}} 
\quad \forall k \in \mathbb{N},
\end{equation}%
where $\theta$ given implicitly by \eqref{eq:gauge} satisfies \eqref{eq:theta}. 
Moreover, the sequence $(x_k)_{k\in\Nbb}$
converges gauge monotonically  to 
some $x^{*}\in\Fix T\cap D$ with rate 
$O(s_k(t_0))$ where 
$s_k(t)\equiv 
\sum_{j=k}^\infty \theta^{(j)}(t)$ and $t_0\equiv d(x_0,\Fix T\cap D)$.
 \end{thm}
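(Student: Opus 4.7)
The plan is to combine the $\alpha$-firm estimate of hypothesis (i) with the error bound \eqref{eq:estimate1} to derive the one-step contraction \eqref{eq:gauge convergence}, iterate to control $d(x_k,\Fix T\cap D)$, then use pointwise nonexpansiveness at near-projections to extract a Cauchy estimate, and finally identify the limit via bounded compactness of $T(D)$.

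For the one-step estimate, fix $x\in D$ and set $t\equiv d(x,\Fix T\cap D)$. The case $t=0$ forces $x\in\Fix T$ and is trivial, so assume $t>0$. For each $y\in\Fix T\cap D$, hypothesis (i) combined with Proposition \ref{t:properties pafne}\eqref{t:properties pafne iii} gives
\[
d(Tx,y)^p \leq d(x,y)^p - \tau\, d(Tx,x)^p, \qquad \tau \equiv \tfrac{c(1-\alphabar)}{2\alphabar}.
\]
Taking the infimum over $y\in \Fix T\cap D$ on both sides (the $d(Tx,x)^p$ term is independent of $y$) yields $d(Tx,\Fix T\cap D)^p \leq t^p - \tau\, d(Tx,x)^p$. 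Hypothesis (ii) together with the strict monotonicity of $\rho$ forces $d(Tx,x)\geq \rho^{-1}(t)$, and then the defining identity \eqref{eq:gauge} produces
\[
d(Tx,\Fix T\cap D)^p \leq t^p - \tau\,\rho^{-1}(t)^p = \theta(t)^p,
\]
which is exactly \eqref{eq:gauge convergence}. Iterating gives $d(x_k,\Fix T\cap D)\leq \theta^{(k)}(t_0)$.

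For convergence of the iterates, choose for each $k$ a near-projection $y_k\in\Fix T\cap D$ with $d(x_k,y_k)\leq 2\theta^{(k)}(t_0)$; such points exist a fortiori, and exact projections become available from bounded compactness of $T(D)\supseteq \Fix T\cap D$ when needed. By Proposition \ref{t:properties pafne}\eqref{t:properties pafne iv}, $T$ is pointwise nonexpansive at $y_k$ on $D$, so $d(x_{k+1},y_k)\leq d(x_k,y_k)$ and hence $d(x_k,x_{k+1})\leq 2d(x_k,y_k)\leq 4\theta^{(k)}(t_0)$. Telescoping yields $d(x_k,x_{k+n})\leq 4\sum_{j=k}^{k+n-1}\theta^{(j)}(t_0)\leq 4\,s_k(t_0)$, which by \eqref{eq:theta}(iii) vanishes as $k\to\infty$; thus $(x_k)_{k\geq 1}\subset T(D)$ is Cauchy. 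Bounded compactness of $T(D)$ delivers a limit $x^*\in T(D)\subseteq D$; since $y_k\to x^*$ and each $y_k$ is a fixed point, applying pointwise nonexpansiveness at $y_k$ to the point $x^*\in D$ gives $d(Tx^*,x^*)\leq d(Tx^*,y_k)+d(y_k,x^*)\leq 2d(x^*,y_k)\to 0$, so $x^*\in \Fix T\cap D$. Letting $n\to\infty$ in the Cauchy estimate yields the advertised rate $d(x_k,x^*)\leq 4\,s_k(t_0)=O(s_k(t_0))$.

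The main technical friction is avoiding any assumption that $D$ is closed or that metric projections onto $\Fix T\cap D$ are attained; both are defused by bounded compactness of $T(D)$, which contains the tail of the iteration as well as $\Fix T\cap D$, together with the continuity at fixed points implicit in pointwise nonexpansiveness. The remainder is routine: infimization of the $\alpha$-firm inequality, inversion of $\rho$ via \eqref{eq:gauge}, and a telescoping estimate based on the summability in \eqref{eq:theta}(iii).
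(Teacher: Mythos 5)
Your proposal is correct and follows the same core strategy as the paper: feed the error bound \eqref{eq:estimate1} into the quasi $\alpha$-firm inequality of Proposition \ref{t:properties pafne}\eqref{t:properties pafne iii}, use the identity \eqref{eq:gauge} to recognize $\theta$, and then telescope the resulting gauge-monotone estimate. Two of your technical choices differ from the paper's in a way worth noting. First, for the one-step estimate you take the infimum over $y\in\Fix T\cap D$ on both sides before invoking subregularity, whereas the paper first uses bounded compactness of $T(D)$ and closedness of $\Fix T\cap D$ to attain the projection at some $y_k$ and then substitutes $y=y_k$; your version needs no attainment at this stage. Second, the paper delegates the Cauchy argument and limit identification to Lemma \ref{t:rm and qafne to convergence}, which bounds $d(x_k,x_{k+1})$ by $\tau^{-1/p}\,d(x_k,\bar x_k)$ via the $\alpha$-firm inequality and concludes $x^*\in\Fix T\cap D$ from closedness of the fixed-point set (Lemma \ref{l:fixpointcvx}, which formally requires $D$ closed and convex); you instead use plain pointwise nonexpansiveness at near-projections to get $d(x_k,x_{k+1})\leq 2d(x_k,y_k)$ and identify the limit as a fixed point by passing to the limit in $d(Tx^*,x^*)\leq 2d(x^*,y_k)$. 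Your route buys a cleaner set of hypotheses (no closedness or convexity of $D$ is invoked) at the cost of a slightly worse multiplicative constant ($4$ versus $\tau^{-1/p}$) in the $O(s_k(t_0))$ rate, which is immaterial. One cosmetic point: in the case $t=0$ you assert $x\in\Fix T$, which requires closedness of $\Fix T\cap D$; it is cleaner to argue directly that $d(Tx,\Fix T\cap D)=0=\theta(0)$ by nonexpansiveness at points of $\Fix T\cap D$ arbitrarily close to $x$, which is all that \eqref{eq:gauge convergence} demands.
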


 Before proving the result, we establish convergence of gauge monotone 
 sequences.   
 
\begin{lemma}[gauge monotonicity and quasi $\alpha$-firmness imply convergence
to fixed points]
\label{t:rm and qafne to convergence}
Let $(G, d)$ be a complete, $p$-uniformly convex metric space with constant $c$.  
Let $T:G\to G$ with  $T(D)\subseteq D\subseteq G$ and $T(D)$ boundedly compact.
Suppose that $\Fix T\cap D$ is nonempty and that $T$ is pointwise 
$\alpha$-firmly nonexpansive at all $y\in \Fix T\cap D$ with the same constant 
$\alphabar$ on $D$. If the sequence $(x_k)_{k\in\mathbb{N}}$ defined by 
$x_{k+1}=Tx_k$ and initialized in $D$ 
is gauge monotone relative to $\Fix T\cap D$ 
with rate $\theta$ satisfying \eqref{eq:theta},
then $(x_k)_{k\in\mathbb{N}}$ converges gauge monotonically  to 
some $x^*\in\Fix T\cap D$ with rate $O(s_k(t_0))$ where 
$s_k(t)\equiv 
\sum_{j=k}^\infty \theta^{(j)}(t)$ and $t_0\equiv d(x_0,\Fix T\cap D)$.
\end{lemma}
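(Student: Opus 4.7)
The plan is to iterate the gauge-monotonicity hypothesis, exploit quasi nonexpansiveness of $T$ at fixed points to transfer the decay of the residual $d(x_k,\Fix T\cap D)$ into a Cauchy estimate on the iterates themselves, and finally identify the limit as a fixed point via approximation from within $\Fix T\cap D$.

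Set $S\equiv\Fix T\cap D$. A direct induction on the hypothesis $d(x_{k+1},S)\leq\theta(d(x_k,S))$, using the monotonicity of $\theta$, gives $d(x_k,S)\leq\theta^{(k)}(t_0)$ for every $k\in\Nbb$. By Proposition~\ref{t:properties pafne}\eqref{t:properties pafne iv}, $T$ is pointwise nonexpansive at every $y\in S$ on $D$, so $d(x_{k+m},y)\leq d(x_k,y)$ holds for all $m\in\Nbb$. Given $\eta>0$, choose $y_k\in S$ with $d(x_k,y_k)\leq d(x_k,S)+\eta$; the triangle inequality combined with nonexpansiveness at $y_k$ yields
\[
d(x_k,x_{k+m})\leq d(x_k,y_k)+d(y_k,x_{k+m})\leq 2d(x_k,y_k)\leq 2d(x_k,S)+2\eta.
\]
Since $\eta>0$ was arbitrary, $d(x_k,x_{k+m})\leq 2 d(x_k,S)\leq 2\theta^{(k)}(t_0)$. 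Because $\sum_j\theta^{(j)}(t_0)<\infty$ forces $\theta^{(k)}(t_0)\to 0$, the sequence $(x_k)_{k\in\Nbb}$ is Cauchy and, by completeness of $(G,d)$, converges to some $x^*\in G$.

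Letting $m\to\infty$ in the Cauchy estimate produces $d(x_k,x^*)\leq 2\theta^{(k)}(t_0)\leq 2 s_k(t_0)$, which is the claimed $O(s_k(t_0))$ rate and certifies gauge-monotone convergence with constant $a=2$. For $x^*\in D$: the tail $(x_k)_{k\geq 1}\subseteq T(D)$ is bounded (being Cauchy), so bounded compactness of $T(D)$ supplies a subsequential limit in $T(D)$, which must coincide with $x^*$, giving $x^*\in T(D)\subseteq D$. Finally, from $d(x^*,S)\leq d(x^*,x_k)+d(x_k,S)\to 0$ one extracts $(z_n)\subset S$ with $z_n\to x^*$, and pointwise nonexpansiveness at each $z_n\in S$ delivers $d(Tx^*,z_n)=d(Tx^*,Tz_n)\leq d(x^*,z_n)\to 0$, so $Tx^*=x^*$ and hence $x^*\in S$.

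The main obstacle is the passage from scalar decay of the residual $d(x_k,S)$ to a Cauchy estimate on the iterates themselves; this is resolved by the $\eta$-approximate-projection trick, which uses $y_k\in S$ as a stationary pivot against which quasi nonexpansiveness controls both $d(x_k,y_k)$ and $d(x_{k+m},y_k)$ by the same quantity. A subtler technicality is that $T$ is not assumed continuous on $D$, so the identification $Tx^*=x^*$ cannot be obtained by passing to the limit in $x_{k+1}=Tx_k$; instead one must pass to the limit in the nonexpansiveness inequality along an approximating sequence $z_n\to x^*$ drawn from $S$.
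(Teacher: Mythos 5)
Your proof is correct, and it takes a genuinely different route from the paper's on both of the two substantive steps. For the Cauchy estimate, the paper uses the full $\alpha$-firm inequality at a metric projection $\bar x_k\in P_{\Fix T\cap D}x_k$ to bound the step length $d(x_k,x_{k+1})\leq \paren{\tfrac{c(1-\alphabar)}{2\alphabar}}^{-1/p}d(x_k,\Fix T\cap D)$ and then telescopes, which is where the constant $a=\paren{\tfrac{c(1-\alphabar)}{2\alphabar}}^{-1/p}$ and the summed rate $s_k$ come from; you instead use only the quasi-nonexpansiveness supplied by Proposition \ref{t:properties pafne}\eqref{t:properties pafne iv} together with an $\eta$-approximate projection $y_k$ as a Fej\'er pivot, obtaining $d(x_k,x_{k+m})\leq 2d(x_k,\Fix T\cap D)\leq 2\theta^{(k)}(t_0)$ directly. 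This is in fact a sharper bound (since $\theta^{(k)}(t_0)\leq s_k(t_0)$), it does not require the infimum defining $d(x_k,\Fix T\cap D)$ to be attained, and it makes no quantitative use of the constant $\alphabar$. For identifying the limit, the paper sends $\bar x_k\to x^*$ and invokes closedness of $\Fix T\cap D$ via Lemma \ref{l:fixpointcvx} (which, strictly speaking, needs $D$ closed and convex --- hypotheses not stated in the lemma); you instead first place $x^*$ in $D$ via bounded compactness of $T(D)$ and then prove $Tx^*=x^*$ by passing to the limit in the nonexpansiveness inequality along $z_n\in\Fix T\cap D$ with $z_n\to x^*$, which sidesteps Lemma \ref{l:fixpointcvx} entirely. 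Both arguments are sound; yours trades the paper's reliance on projections and convexity of the fixed point set for a direct use of the approximate-projection and approximating-sequence devices, at the cost of needing $x^*\in D$ to be argued separately (which you do correctly from bounded compactness of $T(D)$).
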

\begin{proof}
By \eqref{e:psi-Fix T}, the assumption that $T$ is pointwise $\alpha$-firmly nonexpansive at all
 $y\in\Fix T\cap D$ with constant $\alphabar$ on $D$ yields 
 $$d(Tx,y)^p\leq  d(x,y)^p-\tfrac{c(1-\alphabar)}{2\alphabar}d(x,Tx)^p,
 \hspace{0.2cm}\forall x\in D.$$
 Let $x_0\in D$ and define the sequence $x_{k+1}:=Tx_k$ for all $k\in\mathbb{N}$.
 Since $T(D)$ is boundedly compact and $T$ is pointwise $\alpha$-firmly nonexpansive 
 at all points in $\Fix T\cap D$ on $D$, by Proposition \ref{t:properties pafne}\eqref{t:properties pafne iv} 
 and Lemma \ref{l:fixpointcvx},
 $P_{\Fix T\cap D}x_k$ is nonempty (though possibly set-valued) for all $k$;  
 denote any selection by $\bar{x}_k\in P_{\Fix T\cap D}x_k$ for each $k\in\mathbb{N}$. 
 Then we have
 $$d(x_{k+1},\bar{x}_k)^p\leq  d(x_k,\bar{x}_k)^p-\tfrac{c(1-\alphabar)}{2\alphabar}
 d(x_k,x_{k+1})^p,\hspace{0.2cm}\forall k\in \mathbb{N},$$
 which implies  that
 $$d(x_k,x_{k+1})\leq  \paren{\tfrac{c(1-\alphabar)}{2\alphabar}}^{-1/p}d(x_k,\bar{x}_k),
 \hspace{0.2cm}\forall k\in \mathbb{N}.$$
 On the other hand 
 $d(x_k,\bar{x}_k)=
 d(x_k,\Fix T\cap D)
 \leq  \theta\paren{d(x_{k-1},\Fix T\cap D)}$ 
 since $(x_k)_{k\in\mathbb{N}}$ is gauge monotone relative to $\Fix T\cap D$ 
 with rate $\theta$. Therefore an iterative application of gauge monotonicity yields
 $$d(x_k,x_{k+1})
 \leq   \paren{\tfrac{c(1-\alphabar)}{2\alphabar}}^{-1/p}\theta^{(k)}\paren{d(x_0, \Fix T\cap D)},
 \hspace{0.2cm}\forall k\in \mathbb{N}.$$
 Let $t_0=d(x_0, \Fix T\cap D)$.
 For any given natural numbers $k,l$ with $k<l$ an iterative application of the triangle 
 inequality yields the upper estimate 
\begin{eqnarray*}
 d(x_k,x_l)&\leq& d(x_k,x_{k+1})+d(x_{k+1},x_{k+2})+...+d(x_{l-1},x_l)\\
 &\leq& \paren{\tfrac{c(1-\alphabar)}{2\alphabar}}^{-1/p}
 \paren{ \theta^{(k)}(t_0)+\theta^{(k+1)}(t_0)+\dots+\theta^{(l-1)}(t_0)}\\
 &<&  \paren{\tfrac{c(1-\alphabar)}{2\alphabar}}^{-1/p} s_{k}(t_0),
\end{eqnarray*}
where $s_k(t_0)\equiv \sum_{j=k}^{\infty}\theta^{(j)}(t_0)<\infty$ 
for $\theta$ satisfying \eqref{eq:theta}.  
Since $(\theta^{(k)}(t_0))_{k\in\Nbb}$ is 
a summable sequence of nonnegative numbers, the sequence of 
partial sums $s_{k}(t_0)\to 0$ monotonically as $k\to\infty$
and hence $(x_k)_{k\in\mathbb{N}}$ 
is a Cauchy sequence. Because $(G,d)$ is a complete metric space we conclude that $x_k\to x^*$ for some 
$x^*\in G$.  Letting $l\to+\infty$ yields
$$\lim_{l\to+\infty}d(x_k,x_l)=d(x_k,x^*)\leq  a s_{k}(t_0),
\hspace{0.2cm}a\equiv\paren{\frac{c(1-\alphabar)}{2\alphabar}}^{-1/p}.$$
Therefore $(x_k)_{k\in\mathbb{N}}$ converges gauge monotonically to $x^*$ with rate $O(s_k(t_0))$.

It remains to show that $x^*\in\Fix T\cap D$. Note that for each $k\in \mathbb{N}$ we have 
$$d(x_k,\bar{x}_k)=d(x_k,\Fix T\cap D)\leq  \theta^{(k)}(t_0),$$
which yields $\lim_kd(x_k,\bar{x}_k)=0$. 
But by the triangle inequality 
$$d(\bar{x}_k,x^*)\leq  d(x_k,\bar{x}_k)+d(x_k,x^*),$$
so $\lim_kd(\bar{x}_k,x^*)=0$. By construction 
$(\bar{x}_k)_{k\in\mathbb{N}}\subseteq \Fix T\cap D$ and by Lemma \ref{l:fixpointcvx}  
$\Fix T\cap D$ is closed, hence $x^*\in\Fix T\cap D$. 
\end{proof}
 
 \noindent{\em Proof of Theorem \ref{t:msr convergence}. }
Since $S=\Fix T\cap D$, by Proposition \ref{t:properties pafne}\eqref{t:properties pafne ii} we have  
$\psi^{(p,c)}_{T}(x,y)=\tfrac{c}{2}d(Tx,x)^p$ for all $y\in \Fix T$, so
in fact $\Tcal_S(x)=d(Tx,x)$.  Also by Proposition \ref{t:properties pafne}\eqref{t:properties pafne ii} 
$\Tcal_S$ takes the 
value $0$ only on $\Fix T$, that is, $\Tcal_S^{-1}(0)=\Fix{T}$.  
So by assumption 
\eqref{t:msr convergence b} and the definition of metric subregularity
(Definition \ref{d:(str)metric (sub)reg}) 
\begin{eqnarray}
d(x, \Fix T\cap D) &=& 
d(x, \Tcal_S^{-1}(0)\cap D)
  \nonumber\\
 &\le& 
\rho\paren{|\Tcal_S(x)|}
 = \rho(    d(Tx, x))\quad\forall x\in D.\nonumber
\end{eqnarray}
In other words, 
\begin{equation}\label{eq:rate step 1}
   \tfrac{1-\alphabar}{\alphabar}\tfrac{c}{2}
   \paren{\rho^{-1}\paren{%
   d(x,\Fix T\cap D)}}^{p}
\leq \tfrac{1-\alphabar}{\alphabar}\tfrac{c}{2}d(Tx, x)^p
\quad\forall x\in D.
\end{equation}
On the other hand, by assumption 
\eqref{t:msr convergence a} we
have 
\begin{eqnarray}
  \tfrac{1-\alphabar}{\alphabar}\tfrac{c}{2}d(Tx, x)^p\leq d(x,y)^p-d(Tx,y)^p
      \quad\forall y\in  \Fix T\cap D , 
      \forall x\in D.
      \label{eq:rate step 2}
\end{eqnarray}
Incorporating \eqref{eq:rate step 1} into 
\eqref{eq:rate step 2} and rearranging the inequality yields
\begin{eqnarray}
d(Tx,y)^p
   \!&\leq&\! 
      d(x,y)^p - 
         \tfrac{1-\alphabar}{\alphabar}\tfrac{c}{2}
   \paren{\rho^{-1}\paren{%
    d(x, \Fix T\cap D)}}^{p}
      \quad\forall y\in \Fix T\cap D , 
      \forall x\in D.
      \label{eq:rate step 3}
\end{eqnarray}
Since this holds at {\em any} $x\in D$, it certainly 
holds at the iterates $x_k$ with initial point $x_0\in D$
since $T$ is a 
self-mapping on $D$.   Therefore 
\begin{equation}\label{eq:gauge convergence 0}
d\paren{x_{k+1},\, y}
\leq \sqrt[p]{ d\paren{x_{k},\,y}^p - 
\frac{1-\alphabar}{\alphabar}\tfrac{c}{2}
\paren{\rho^{-1}\paren{%
 d\paren{x_{k},\, \Fix T\cap D}}}^p} 
\quad\forall y\in \Fix T\cap D, ~
\forall k \in \mathbb{N}.
\end{equation}%

Equation \eqref{eq:gauge convergence 0} simplifies.  
Indeed, by Lemma \ref{l:fixpointcvx}, $\Fix T\cap D $ is closed.  
Moreover, since $T(D)$ is assumed to be boundedly compact, 
for every $k\in\Nbb$  the distance  $d(x_k, \Fix T\cap D)$ 
is attained  at some $y_k\in \Fix T\cap D$ yielding
\begin{equation}
d(x_{k+1},y_{k+1} )^p \leq 
d(x_{k+1},y_{k} )^p \leq 
d(x_k, y_k)^p  - 
\tfrac{1-\alphabar}{\alphabar}\tfrac{c}{2}\paren{\rho^{-1}
\paren{ d(x_k, y_k)}}^p
\quad\forall k \in \mathbb{N}.
\label{eq:gauge convergence intermed}
\end{equation}
Taking the $p$-th root and recalling \eqref{eq:gauge}
yields \eqref{eq:gauge convergence}.

This establishes also that the sequence $(x_k)_{k\in\Nbb}$ is 
gauge monotone relative to $\Fix T\cap D$ with rate 
$\theta$ satisfying 
Eq.\eqref{eq:theta}.  
By Lemma \ref{t:rm and qafne to convergence} we conclude that 
the sequence $(x_k)_{k\in\Nbb}$ converges 
gauge monotonically to $x^*\in\Fix T\cap D$ with the rate
$O(s_k(d(x_0,\Fix T\cap D)))$ where $s_k(t)\equiv 
\sum_{j=k}^\infty \theta^{(j)}(t)$. 
\hfill $\Box$

In \cite[Theorem 2]{LukTebTha18} it is shown that if every fixed point sequence initialized on 
$D\subset G$ is linearly monotone with respect to $\Fix T\cap D$ with rate $c<1$ then the surrogate
mapping $\Psi$ is linearly metrically subregular for $0$ relative to $D$ on $D$.  
From this they establish that 
linear metric subregularity is in fact necessary for linear convergence of fixed point 
sequences generated by almost $\alpha$-firmly nonexpansive mappings \cite[Corollary 1]{LukTebTha18}.  
We show that 
this extends more generally to fixed point iterations in $p$-uniform metric spaces of 
quasi $\alpha$-firmly nonexpansive mappings where the iterates converge at a rate
characterized by $\theta$.  

\begin{thm}[necessity of metric subregularity for monotone sequences]
\label{t:msr necessary}
Let $(G, d)$ be a $p$-uniformly convex metric space with constant $c$.  
Let $T:D\to D$ with  $D\subseteq G$.
Suppose that $S\equiv \Fix T\cap D$ is nonempty.  Suppose  
all sequences $(x_k)_{k\in\mathbb{N}}$ defined by $x_{k+1}=Tx_k$ 
and initialized in $D$ are gauge monotone 
relative to $S$ 
with rate $\theta$ satisfying \eqref{eq:theta}.  Suppose, in addition, that  
$(\Id - \theta)^{-1}(\cdot)$ is continuous on $\Rbb_+$, strictly increasing, 
and $(\Id - \theta)^{-1}(0)=0$.  Then $\Tcal_{S}$ defined by 
\eqref{eq:Tcal} is metrically subregular for $0$ relative to $D$ on $D$ 
with gauge $\rho(\cdot)=(\Id-\theta)^{-1}(\cdot)$.
\end{thm}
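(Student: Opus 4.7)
The strategy is to directly verify the metric subregularity inequality by combining the triangle inequality with the gauge monotonicity hypothesis; essentially the proof is the one-line observation ``triangle inequality $+$ gauge monotonicity $\Rightarrow$ metric subregularity''. First I would identify $\Tcal_S$ and its zero set. Since $S = \Fix T\cap D \subseteq \Fix T$, Proposition \ref{t:properties pafne}\eqref{t:properties pafne ii} gives $\psi_T^{(p,c)}(x,y) = \tfrac{c}{2}d(Tx,x)^p$ for every $y \in S$, hence by \eqref{eq:Tcal} we have (up to the positive scalar noted in \eqref{eq:Tcal_Fix_T}) $\Tcal_S(x) = d(Tx,x)$, and $\Tcal_S^{-1}(0)\cap D = \Fix T \cap D = S$. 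The claim of the theorem therefore reduces to showing
\begin{equation*}
d(x, S)\ \leq\ \rho\bigl(d(Tx,x)\bigr)\qquad \forall x\in D.
\end{equation*}

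Next, fix an arbitrary $x\in D$ and consider the iteration $x_0 := x$, $x_{k+1} := Tx_k$, so that $x_1 = Tx$. By the standing gauge monotonicity hypothesis applied to this sequence,
\begin{equation*}
d(Tx, S)\ \leq\ \theta\bigl(d(x, S)\bigr).
\end{equation*}
Taking the infimum over $y\in S$ in the triangle inequality $d(x,y)\leq d(x,Tx)+d(Tx,y)$ gives $d(x,S)\leq d(x,Tx)+d(Tx,S)$. Substituting the gauge monotonicity estimate yields
\begin{equation*}
(\Id - \theta)\bigl(d(x, S)\bigr)\ =\ d(x,S) - \theta\bigl(d(x,S)\bigr)\ \leq\ d(x, Tx).
\end{equation*}

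Finally, by assumption $\rho := (\Id-\theta)^{-1}$ is continuous on $\Rbb_+$, strictly increasing, with $\rho(0)=0$, so $\rho$ is order-preserving. Applying $\rho$ to the previous display gives
\begin{equation*}
d(x,S)\ \leq\ \rho\bigl(d(Tx,x)\bigr)\ =\ \rho\bigl(\Tcal_S(x)\bigr),
\end{equation*}
which is precisely the metric subregularity of $\Tcal_S$ for $0$ relative to $D$ on $D$ with gauge $\rho$, completing the proof. I do not anticipate any serious obstacle: the only delicate points are that $\rho$ qualifies as a gauge in the sense of Definition \ref{d:(str)metric (sub)reg} (guaranteed by the assumed properties of $(\Id-\theta)^{-1}$) and that the final inversion step is order-preserving (strict monotonicity of $\rho$). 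This mirrors the Hilbert-space argument for linear gauges in \cite[Theorem 2]{LukTebTha18}, but works uniformly for arbitrary $p$-uniformly convex spaces and for any $\theta$ satisfying the stated inverse regularity.
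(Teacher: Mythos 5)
Your proposal is correct and follows essentially the same route as the paper: identify $\Tcal_S(x)=d(Tx,x)$ with $\Tcal_S^{-1}(0)\cap D=S$, combine the triangle inequality $d(x,S)\leq d(x,Tx)+d(Tx,S)$ with the gauge monotonicity bound $d(Tx,S)\leq\theta(d(x,S))$, and invert via the strictly increasing $(\Id-\theta)^{-1}$. The only cosmetic difference is that the paper states the inequality along an arbitrary iterate $x_k$ of an arbitrary sequence, while you fix $x_0=x\in D$ and use only the first step, which is the same content.
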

\begin{proof}
If the fixed point sequence is gauge monotone
relative to $S$ 
with rate $\theta$ satisfying \eqref{eq:theta} then by the triangle inequality 
\begin{eqnarray}
d(x_{k+1},x_k)&\geq& d(x_k, S) - d(x_{k+1}, S) \nonumber\\
&\geq& d(x_k, S) - \theta\paren{d(x_k, S)} \quad  \forall k\in \Nbb.
\label{e:Robin}
\end{eqnarray}
On the other hand, as shown in the proof of Theorem \ref{t:msr convergence}
\begin{eqnarray}
\Tcal_{S}^{-1}(0) &=& \Fix T, \nonumber\\
d(0,\Tcal_{S}(x_k))& =& d(x_{k+1}, x_k) \label{e:Hood}
\end{eqnarray}
Combining \eqref{e:Robin} and \eqref{e:Hood} yields
\begin{equation}\label{e:dumb}
d(0,\Tcal_{S}(x_k))\geq d(x_k, \Tcal_{S}^{-1}(0)\cap D) - 
\theta\paren{d(x_k, \Tcal_{S}^{-1}(0)\cap D)} \quad  \forall k\in \Nbb
 \end{equation}
By assumption $(\Id - \theta)^{-1}(\cdot)$ is continuous on $\Rbb_+$, 
strictly increasing, 
and $(\Id - \theta)^{-1}(0)=0$, so 
\begin{equation}\label{e:dumber}
(\Id - \theta)^{-1}\paren{d(0,\Tcal_{S}(x_k))}\geq d(x_k, \Tcal_{S}^{-1}(0)\cap D) 
\quad  \forall k\in \Nbb.
 \end{equation}
Since this holds for {\em any} sequence $(x_k)_{k\in\Nbb}$ initialized in $D$, 
we conclude that $\Tcal_S$ is metrically subregular for $0$ on $D$ with 
gauge $\rho=(\Id - \theta)^{-1}$.  
\end{proof}

The next corollary is an immediate consequence of Lemma \ref{t:rm and qafne to convergence}
and Theorem \ref{t:msr necessary}.  
\begin{corollary}[necessity of metric subregularity for gauge monotone convergence]
\label{t:msr necessary convergence}
Let $(G, d)$ be a $p$-uniformly convex metric space with constant $c$.  
Let $T:D\to D$ with  $D\subseteq G$.
Suppose that $S\equiv \Fix T\cap D$ is nonempty and that $T$ is 
$\alpha$-firmly nonexpansive at all $y\in S$ on $D$.  Suppose that 
all sequences $(x_k)_{k\in\mathbb{N}}$ defined by $x_{k+1}=Tx_k$ 
and initialized in $D$ are gauge monotone 
relative to $S$ 
with rate $\theta$ satisfying \eqref{eq:theta}. 
Suppose, in addition, that  
$(\Id - \theta)^{-1}(\cdot)$ is continuous on $\Rbb_+$, strictly increasing, 
and $(\Id - \theta)^{-1}(0)=0$.  Then all sequences initialized on $D$ 
converge gauge monotonically to some $\xbar\in S$ with rate 
$O(s_k(t_0))$ where 
$s_k(t)\equiv 
\sum_{j=k}^\infty \theta^{(j)}(t)$ and $t_0\equiv d(x_0,\Fix T\cap D)$.
Moreover, $\Tcal_{S}$ defined by 
\eqref{eq:Tcal} is metrically subregular for $0$ relative to $D$ on $D$ 
with gauge $\rho(\cdot)=(\Id-\theta)^{-1}(\cdot)$. 
\end{corollary}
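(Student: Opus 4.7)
The plan is straightforward: the corollary is stated as an immediate consequence of Lemma \ref{t:rm and qafne to convergence} and Theorem \ref{t:msr necessary}, and its proof consists in checking that the common hypothesis set of the corollary supplies exactly the hypotheses required by each of these two prior results, then concatenating their conclusions.

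First, for the convergence statement, I would invoke Lemma \ref{t:rm and qafne to convergence}. Its required hypotheses are: $T$ pointwise $\alpha$-firmly nonexpansive at all $y\in \Fix T\cap D$ on $D$ with a common constant (given), $\Fix T\cap D$ nonempty (given), and the iterate sequence $(x_k)_{k\in\Nbb}$ gauge monotone relative to $\Fix T\cap D$ with rate $\theta$ satisfying \eqref{eq:theta} (given, and in fact assumed to hold for \emph{every} initialization in $D$). The lemma then delivers gauge monotone convergence $x_k\to \xbar\in\Fix T\cap D$ with rate $O(s_k(t_0))$ where $s_k(t)=\sum_{j=k}^\infty\theta^{(j)}(t)$ and $t_0=d(x_0,\Fix T\cap D)$, which is exactly the first claim.

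Second, for the metric subregularity statement, I would apply Theorem \ref{t:msr necessary}. Its required hypotheses are: $S=\Fix T\cap D$ nonempty, every fixed-point sequence initialized in $D$ is gauge monotone relative to $S$ with rate $\theta$ satisfying \eqref{eq:theta}, and $(\Id-\theta)^{-1}$ is continuous on $\Rbb_+$, strictly increasing, with $(\Id-\theta)^{-1}(0)=0$. All four are explicit in the corollary. The theorem then yields that $\Tcal_S$ defined by \eqref{eq:Tcal} is metrically subregular for $0$ relative to $D$ on $D$ with gauge $\rho(\cdot)=(\Id-\theta)^{-1}(\cdot)$, which is the second claim.

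The only potential obstacle is checking that the implicit background conditions used inside Lemma \ref{t:rm and qafne to convergence} — completeness of $(G,d)$ and bounded compactness of $T(D)$ — are in force; these are used there both to produce a limit for the Cauchy sequence and to obtain nonempty projections onto $\Fix T\cap D$ via Lemma \ref{l:fixpointcvx}. Assuming these are inherited from the ambient standing setting of the paper (as is customary in this line of work), no additional argument is needed; otherwise one simply appends them to the hypotheses. With that caveat, the two conclusions stitch together with no further calculation.
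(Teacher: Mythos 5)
Your proposal is correct and matches the paper exactly: the paper offers no separate proof, declaring the corollary an immediate consequence of Lemma \ref{t:rm and qafne to convergence} and Theorem \ref{t:msr necessary}, which is precisely the hypothesis-check-and-concatenate argument you give. Your caveat about completeness of $(G,d)$ and bounded compactness of $T(D)$ is well taken --- the corollary as stated omits these even though the lemma uses them, so flagging that they must be assumed (or appended to the hypotheses) is, if anything, more careful than the paper itself.
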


\section{Examples}\label{s:Examples}
Most of the concrete examples provided here are for $p$-uniformly convex spaces with $p=c=2$, i.e.
CAT(0) spaces, and these are mostly known.  We hint at a path beyond this setting and in the case
of cyclic projections obtain an extension of \cite[Proposition 4.1]{RuiLopNic15} to 
complete, symmetric perpendicular, $p$-uniformly convex spaces.

\subsection{Proximal Splitting}
Let $(H,d)$ be a Hadamard space,  $f_i:H\to H$ be proper lsc 
convex functions for $i=1,2,\dots N$.  Consider the problem
\begin{equation}
 \label{eq:sum of cvx}
 \inf_{x\in H}\sum_{i=1}^N f_i(x).
 \end{equation}
 In this setting, the $p$-proximal mapping of $f$ \eqref{e:prox^p} simplifies to 
 \begin{equation}\label{eq:prox}
	\prox^2_{f, \lambda}(x)\equiv\argmin_{y\in H}\klam{f(y)+\tfrac{1}{2\lambda}d(x,y)^2}.
 \end{equation}
 This has been studied in CAT(0) spaces in \cite{Jost97, Banert, AriLeuLop14} and in the 
 Hilbert ball in \cite{KopRei09}.  To reduce notational clutter, we drop the superscript $2$.  
 In these earlier works it was already known that 
resolvents of lsc convex functions are (everywhere) $\alpha$-firmly nonexpansive with 
$\alpha=1/2$.  The specialization of Corollary \ref{t:prox aafne} to the case $p=c=2$
confirms this.   Applying {\em backward-backward splitting} to this problem yields 
 Algorithm \ref{alg:bbs}.
\begin{algorithm}[h]    
\SetKwInOut{Input}{Parameters}\SetKwInOut{Output}{Initialization}
  \Input{Functions $f_1\ldots,f_N$ and $\lambda_i >0$ $(i=1,2,\dots,N)$.}
  \Output{Choose  $x_0\in H$.}
  \For{$k = 0,1, 2, \ldots $}{
    \begin{align*}
		x_{k+1}=Tx_k\equiv 
		\paren{\prox_{f_N, \lambda_N}\circ\cdots\circ \prox_{f_2, \lambda_2}\circ \prox_{f_1, \lambda_1}}(x_k)
    \end{align*}
}
\caption{
Proximal splitting}\label{alg:bbs}
\end{algorithm}
We are certainly not the first to study this algorithm.  Indeed, convergence has been established already in 
\cite[Theorem 4.1]{RuiLopNic15}.   This conclusion 
 also follows immediately from Theorem \ref{t:ne at asymp centers}  upon
 application of Corollary \ref{t:m-compositionthm} which 
 shows that the composition of quasi-$\alpha$-firmly nonexpansive $\prox$ mappings, 
 $ \prox_{f_i, \lambda_i}$, is quasi-$\alpha$-firmly 
 nonexpansive on $H$ 
 with constant $\alphabar_N$ given recursively by \eqref{e:alpha comp}.  
 If on a neighborhood of $\Fix T$, denoted by $D$, the  
 mapping $\Tcal_{\Fix T\cap D}$ defined by \eqref{eq:Tcal} -- which by Proposition 
 \ref{t:properties pafne}\eqref{t:properties pafne ii} simplifies to \eqref{eq:Tcal_Fix_T} -- 
 satisfies 
 \begin{equation}\label{eq:simple msr}
  d(x,\Fix T\cap D)\leq \rho( d(Tx, x))\quad \forall x\in D 
\end{equation}
where $\rho$ is a gauge 
given by \eqref{eq:gauge} for $\tau = \tfrac{1-\alphabar_N}{\alphabar_N}$, 
 then by Theorem \ref{t:msr convergence} the sequence $(x_k)$ converges gauge monotonically 
 to some $x^{*}\in\Fix T$ with rate 
$O(s_k(t_0))$ where 
$s_k(t)\equiv 
\sum_{j=k}^\infty \theta^{(j)}(t)$ and $t_0\equiv d(x_0,\Fix T)$
for  $\theta$ given implicitly by \eqref{eq:gauge}. 

By Corollary \ref{t:prox aafne}, on spaces with curvature bounded above, the $p$-proximal mapping 
is only {\em almost} $\alpha$-firmly nonexpansive, which then yields that the composition of 
$p$-proximal mappings is also only almost $\alpha$-firmly nonexpansive.  However, the violation 
$\epsilon_c = \frac{2-c}{c-1}$, where $c$ is the constant of curvature of the space.  This constant
can be made arbitrarily small by choosing a small enough domain.  In this way, the violation can also 
be made arbitrarily small.  As shown in \cite{LukTamTha18, LukMar20} in the context of Euclidean 
spaces, if $\Tcal_{\Fix T}$
is metrically subregular, then the violation of $\alpha$-firm nonexpansiveness can be overcome 
to yield quantifiable (e.g. linear) convergence on neighborhoods of $\Fix T$.  This 
would then yield for the first time convergence of proximal splitting algorithms on spaces with 
positive curvature.   This will be the subject of a future study.  

\subsection{Projected Gradients}
Here we specialize problem \eqref{eq:sum of cvx} to the case $N=2$ and 
$f_2=\iota_C$, the indicator function of some closed convex set $C\subset H$.  Recall, 
in a Hadamard space Moreau-Yosida envelope of $f$ is defined by 
\[
e_{f, \lambda}(x)\equiv \inf_{y\in H}\paren{f(y)+\tfrac{1}{2\lambda}d(x,y)^2}. 
\]
In a Hilbert space setting, the 
proximal mapping of a convex function $f$ and the resolvent of its subdifferential are
one and the same.  Moreover, $e_{f, \lambda}$ is continuously 		
differentiable with  $\nabla e_{f, \lambda} = \frac{1}{\lambda}\paren{\Id - \prox_{f, \lambda}}$.  
A step of length $\tau$ 
in the direction of steepest descent of the Moreau-Yosida envelope of $f$ takes the form 
\[
x-\tau \nabla e_{f, \lambda}(x) = \paren{(1-\tau)\Id + \tau\prox_{f, \lambda}}(x).
\]
Formally transposing this to a CAT(0) space yields the nonlinear analog to the 
direction of steepest descent for $e_{f, \lambda}$:
\begin{equation}\label{e:sd Me}
	(1-\tau)x\oplus \tau \prox_{f,\lambda}(x).
\end{equation}
This leads to Algorithm \ref{alg:pg},  the analog to projected gradients in  CAT(0) space, 
which is nothing more than a projected resolvent/ projected proximal iteration. 
\begin{algorithm}[h]    
\SetKwInOut{Input}{Parameters}\SetKwInOut{Output}{Initialization}
\Input{$\mymap{f}{H}{\Rbb}$, the closed set $C\subset H$, $\lambda>0$ and $\tau\in(0,1)$.}
  \Output{Choose  $x_0\in H$.}
  \For{$k = 0,1, 2, \ldots $}{
    \begin{align*}
		& x_{k+1}=T_{PG}(x_k)\equiv P_{C}\paren{(1-\tau)\Id\oplus \tau \prox_{f,\lambda}}(x_{k})
    \end{align*}
}
\caption{Metric Projected Gradients}\label{alg:pg}
\end{algorithm}
Theorem \ref{t:cvx comb pafne} establishes that the mapping 
$x\mapsto \paren{(1-\tau)\Id\oplus \tau \prox_{f,\lambda}}$
is $\alpha$-firmly 
nonexpansive with constant $\alphabar=1/2$.  Therefore, by 
Theorem \ref{t:compositionthm} the operator $T_{PG}$ is $\alpha$-firmly nonexpansive on $H$ 
with constant 
$\alphahat= \tfrac{2}{3}$.
Theorem \ref{t:ne at asymp centers} then guarantees that the sequence $(x_k)$ is 
 $\Delta$-convergent to some $x^*\in\Fix T_{GF}$, 
with strong convergence whenever $T_{GF}$ is boundedly compact.  
If in addition \eqref{eq:simple msr} is satisfied with $T$ replaced by $T_{PG}$ and 
with gauge $\rho$ given by \eqref{eq:gauge} for $\tau = 1/2$, 
 then, again,  by Theorem \ref{t:msr convergence}  the sequence $(x_k)$ converges gauge monotonically 
 to 
some $x^{*}\in\Fix T$ with rate 
$O(s_k(t_0))$ where 
$s_k(t)\equiv 
\sum_{j=k}^\infty \theta^{(j)}(t)$ and $t_0\equiv d(x_0,\Fix T)$
for  $\theta$ given implicitly by \eqref{eq:gauge}. 

\subsection{Cyclic Projections in $p$-uniformly Convex Spaces}
For compositions of projectors we are not confined to Hadamard spaces.  We consider 
Algorithm \ref{alg:bbs} when the functions $f_i\equiv \iota_{C_i}$, the indicator functions 
of closed convex sets $C_i\subset G$, where $(G, d)$ is a 
complete, symmetric perpendicular $p$-uniformly convex space with constant $c$.  
The $p$-proximal mapping of the indicator  function is the metric projector and so by 
Proposition \ref{t:projectors} these are pointwise $\alpha$-firmly nonexpansive at all 
points in $\cap_i C_i$ (assuming, of course, that this is nonempty).  
By Lemma \ref{t:afne of compositions} the cyclic projections mapping
\begin{equation}\label{e:cp mapping}
	T_{CP}\equiv P_{C_N}\cdot P_{C_2} P_{C_1}
\end{equation}
is pointwise $\alpha$-firmly nonexpansive at all 
points in $\cap_i C_i=\Fix T_{CP}$, when the intersection is nonempty, 
with constant $\alphabar_N = \frac{N-1}{N}$
on $G$.  The
only asymptotic centers of subsequences of cyclic projections are points
in this intersection, and here the projectors, and hence the cyclic projections mapping,  
are pointwise nonexpansive. 
So by Theorem \ref{t:ne at asymp centers} the cyclic projections sequence 
$\Delta$-converges to a point in  $\cap_i C_i$ whenever this is nonempty, and 
converges strongly whenever at least one of the sets $C_i$ is compact.  This 
generalizes \cite[Proposition 4.1]{RuiLopNic15} which is limited to CAT($\kappa$)
spaces (i.e. $p=2$, $c<2$ small enough).  

If in addition 
 \begin{equation}\label{eq:simple msr - sets}
	 d(x,\cap_i C_i)\leq \rho( d(T_{CP}x, x))\quad \forall x\in G 
\end{equation}
where $\rho$ is a gauge 
given by \eqref{eq:gauge} for $\tau = \tfrac{1}{N-1}$, 
 then by Theorem \ref{t:msr convergence} the sequence $(x_k)$ converges gauge monotonically 
 to some $x^{*}\in\Fix T$ with rate 
$O(s_k(t_0))$ where 
$s_k(t)\equiv 
\sum_{j=k}^\infty \theta^{(j)}(t)$ and $t_0\equiv d(x_0,\Fix T)$
for  $\theta$ given implicitly by \eqref{eq:gauge}. 

\section{Open Problems}
Nonexpansiveness is a fairly robust property that carries over to compositions and convex 
combinations of mappings without requiring that those operators share fixed points.  
Our notion of $\alpha$-firm mappings appears to be much more demanding.  
Our development begs the question: is the $\alpha$-firmness 
property preserved in some sense under compositions and 
convex compositions of (pointwise) $\alpha$-firm mappings that do not share common 
fixed points?  The answer to this question has immediate bearing on the analysis of 
simple algorithms like cyclic projections for inconsistent feasibility or coordinate descents 
in nonlinear spaces.  

The other open problem, whose solution was hinted at above, is whether compositions 
and averages of $p$-proximal mappings converge at some rate under reasonable assumptions 
of metric subregularity at fixed points.  The notion of {\em almost } $\alpha$-firm 
nonexpansiveness was used in \cite{LukTamTha18} primarily for the purpose of handling 
projectors onto nonconvex sets, and other prox-mappings of nonconvex functions.  
Since the technology of almost $\alpha$-firmness is required for the $p$-proximal 
mappings of even convex functions,  a study in this direction will also account for
$p$-proximal mappings of nonconvex functions, including projections onto 
nonconvex sets.


\end{document}